\newtheorem{theorem}{Theorem}[section]
\newtheorem{lemma}[theorem]{Lemma}
\newtheorem{proposition}[theorem]{Proposition}
\newtheorem{corollary}[theorem]{Corollary}
\theoremstyle{remark}
\newtheorem{remark}{Remark}[section]
\def\Xint#1{\mathchoice
{\XXint\displaystyle\textstyle{#1}}%
{\XXint\textstyle\scriptstyle{#1}}%
{\XXint\scriptstyle\scriptscriptstyle{#1}}%
{\XXint\scriptscriptstyle\scriptscriptstyle{#1}}%
\!\int}
\def\XXint#1#2#3{{\setbox0=\hbox{$#1{#2#3}{\int}$ }
\vcenter{\hbox{$#2#3$ }}\kern-.6\wd0}}
\def\dashint{\Xint-}
\newcommand{\Int}{\mathrm{Int}}
\newcommand{\Vol}{\mathrm{Vol}}
\newcommand{\Rmnum}[1]{\expandafter\@slowromancap\romannumeral #1@}
\begin{document}
\allowdisplaybreaks
\title{Flow by powers of the Gauss  curvature in space forms}
\keywords{Entropy, Gauss curvature, Monotonicity, Regularity estimates, Space forms}
\thanks{\noindent \textbf{MR(2010)Subject Classification}   35K55, 35B65, 53A05, 58G11}
\author{Min Chen and Jiuzhou Huang}
\address{1203 Burnside Hall, 805 Sherbrooke Street West Montreal, Quebec H3A 0B9}
\email{min.chen5@mail.mcgill.ca}
\address{1030 Burnside Hall, 805 Sherbrooke Street West Montreal, Quebec H3A 0B9}
\email{jiuzhou.huang@mail.mcgill.ca}
\thanks{The first author is supported by the National Nature Science Foudation of China  No. 11721101 and National Key Research and Development Project No. SQ2020YFA070080. }
\pagestyle{fancy}
\fancyhf{}
\renewcommand{\headrulewidth}{0pt}
\fancyhead[CE]{}
\fancyhead[CO]{\leftmark}
\fancyhead[LE,RO]{\thepage}

\begin{abstract}
In this paper,  we prove that convex hypersurfaces under the flow by powers $\alpha>0$ of the Gauss curvature in space forms $\mathbb{N}^{n+1}(\kappa)$ of constant sectional curvature $\kappa$ $(\kappa=\pm 1)$ contract to a point in finite time $T^*$. Moreover, convex hypersurfaces under the flow by power $\alpha>\frac{1}{n+2}$ of the Gauss curvature converge (after rescaling) to a limit which is the geodesic sphere in $\mathbb{N}^{n+1}(\kappa)$. This extends the known results in Euclidean space to space forms.
\end{abstract} 

\maketitle
\numberwithin{equation}{section}
\section{Introduction}
Parabolic flows for hypersurfaces play important roles in geometric analysis. One important example is the flow by Gauss curvature. Much attention has been paid to flow of convex hypersurfaces $X(\cdot,\tau): M\rightarrow \mathbb{R}^{n+1}$ by power of Gauss curvature:
\begin{equation}\label{unnorm flow on Euclidean space}
	X_\tau(x,\tau)=-K^\alpha(x,\tau)\nu(x,\tau) , \quad \alpha>0,
\end{equation}
 where $\nu(x,\tau)$ is the unit exterior normal at $X(x,\tau)$ of $M_{\tau}=X(M,\tau)$ and $K(x,\tau)$ is the Gauss curvature of $M_{\tau}$.

Gauss curvature flow was introduced by Firey \cite{F} to model the shape of tumbling stones. It was proved in \cite{Tso} for $\alpha=1$, and in \cite{C nroot} for any $\alpha>0$  that the flow shrinks to a point in finite time $T^*>0$ for any smooth strictly convex initial hypersurfaces $M$.  A  Harnack type inequality for Gauss curvature flow of compact convex hypersurfaces for all $\alpha>0$, and an entropy estimate for $\alpha=1$ were proved in \cite{C entropy}. Hamilton \cite{H} used these results to get the sharp upper bound of Gauss curvature and the diameter. The main interest is to understand the asymptotic behavior of the flows (\ref{unnorm flow on Euclidean space}) as the time $\tau$ approaches to the singular time $T^*$. 

When $n=1, \alpha =1$, (\ref{unnorm flow on Euclidean space} is the curve shorting flow, convergence to circle was proved  by Gage-Hamilton \cite{Gage-Hamilton} for initial convex curve, and Grayson \cite{Grayson} for general initial curve. Convergence to circles  was proved for $n=1$ and $\alpha>1$ in \cite{curves}, for $n=1$ and $\frac{1}{3}<\alpha<1$ in \cite{isotropic} with convex initial curve.  For general $n>1$, Chow \cite{C nroot} analyzed the case $\alpha=\frac{1}{n}$ and proved that solutions of the normalized flow converge to the unit sphere as $t\rightarrow \infty$. The convergence to sphere when $n=2,\alpha=1$ was established by Andrews in \cite{A invention}, see also \cite{BC} for the case $n=2$ and $\frac{1}{2}<\alpha <1$. The exponent $\alpha=\frac{1}{n+2}$ is critical as it's the affine curvature flow. In this case, the convergence to ellipsoids was established by Andrews in \cite{affine} (see also  \cite{ST} for $n=1$). Convergence to solitons was established for $\alpha\in(\frac{1}{n+2},\frac{1}{n})$ in \cite{A pinch} for a family of anisotropic Gauss curvature flows (more general situation). For the normalized flow of (\ref{unnorm flow on Euclidean space}) with strictly convex initial hypersurfaces in $\mathbb R^{n+1}, \forall n\ge 1$, the convergence to solitons (self-similar solutions) was established for the case $\alpha=1$ by Guan-Ni \cite{Guan-Ni}, and by Andrews-Guan-Ni \cite{AGN} for $\forall \alpha>\frac{1}{n+2}$. In \cite{AGN}, the uniqueness of soliton (round sphere) was proved when it is centrally symmetric. The final resolution of the uniqueness of solitons of normalized flow of (\ref{unnorm flow on Euclidean space}) were obtained  by Choi-Daskalopoulos in \cite{Choi} ($\frac{1}{n}< \alpha <1+\frac{1}{n}$) and by Brendle-Choi-Daskalopoulos \cite{BCD}  for all $\alpha> \frac{1}{n+2}$. 

 Parabolic flows for hypersurfaces in general Remannian manifolds were considered by many authors. Generalization of flows by mean curvature in Euclidean space to general Riemannian manifold \cite{Huisken-R} was a fundamental contribution by Huisken. More recently, a new type of mean curvature flow in space forms was introduced by Guan and Li \cite{Guan-Li}. Gerhardt \cite{Gerhardt sphere} demonstrated a correspondence between contracting and expanding flows of hypersurfaces in the sphere. Andrews, Han, Li and Wei \cite{Andrews-Han-Li-Wei} generalized Andrew's noncollapsing estimates for curvature flows in Euclidean space to fully nonlinear curvature flows in space forms. 

It is natural to consider flows by powers of Gauss curvature in more general ambient spaces. Very little is known except for the case $\alpha=1$, $n=2$ or  $\alpha =1$, $n\geq 3$ and initial hypersurfaces are axially symmetric \cite{McCoy}. 
                                                                                                                                                                                                                                                                                                                                                                                                                                                                                                                                                                                                                                                                                                                                                                                                                                                                                                                                                                                                                                                                                                                                                                                                                                                                                                                                                                                                                                                                                                                                                                                                                                                                                                                                                                                                                                                                                                                                                                                                                                                                                                                                                                                                                                                                                                                                                                                                                                                                                                                                                                                                                                                                                                                                                                                                                                                                                                                                                                                                                                                                                                                                                                                                                                                                                                                                                                                                                                                           
In this paper, we establish complete analogous results of flow by power of Gauss curvature in space forms: 
\begin{equation}\label{unnorm flow on spaceform}
	\left\{\begin{split}
	\tilde{X}_\tau(x,\tau)&=-\tilde{K}^\alpha(x,\tau)\nu(x,\tau),\\
	\tilde{X}(0)&=\tilde{X}_0,
	\end{split}
	\right.
\end{equation}
where $\nu(x,\tau)$ is the unit exterior normal at $\tilde{X}(x,\tau)$ and $\tilde{K}(x,\tau)$ is the Gauss curvature of $\tilde{M}_{\tau}$, $\mathbb{N}^{n+1}(\kappa)$ is the $(n+1)$ dimensional simply connected space form of constant sectional curvature $\kappa=\pm 1$ (the tildes distinguish these from the normalized counterparts introduced later). Below is our main theorem.
\begin{theorem}\label{main theo} 
If $\tilde{X}_0$ represents a strictly convex smooth hypersurface in $\mathbb{N}^{n+1}(\kappa)$, then for any $\alpha>0$, the initial value problem (\ref{unnorm flow on spaceform}) has a unique solution on a maximum finite time interval $[0,T^*)$ such that the $\tilde M_{\tau}$ converges to a point as $\tau \rightarrow T^*$. Moreover,  for $\alpha>
\frac{1}{n+2}$, $\tilde{M}_{\tau}$ converges to a geodesic sphere in $\mathbb{N}^{n+1}(\kappa)$ in the $C^{\infty}$-topology after re-scaling.
\end{theorem}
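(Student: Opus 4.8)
The plan is to treat the two assertions separately: finite-time contraction to a point for every $\alpha>0$, and, for $\alpha>\frac{1}{n+2}$, smooth convergence of the rescaled hypersurfaces to a geodesic sphere.

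\emph{Contraction to a point.} First I would reparametrize (\ref{unnorm flow on spaceform}) by the support function $u$ of $\tilde M_\tau$ relative to a fixed point $p_0$ in the interior of the initial convex body, turning the flow into a fully nonlinear parabolic equation $\partial_\tau u=-\tilde K^\alpha$ on $\mathbb S^n$, with $\tilde K$ a function of $u$ and its covariant Hessian, parabolic precisely where $\tilde M_\tau$ is strictly convex; standard parabolic theory then gives a unique smooth solution on a maximal interval $[0,T^*)$. Next I would show strict convexity is preserved: compared with the Euclidean case the evolution equation of the Weingarten map picks up extra zeroth- and first-order terms proportional to $\kappa$, and for a \emph{contracting} flow of a convex hypersurface these have the favourable sign in the tensor maximum principle (when $\kappa=1$ one also uses that $\tilde M_\tau$, being enclosed by $\tilde M_0$, stays inside a fixed open hemisphere). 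Finally, using geodesic spheres as barriers---an enclosing one stays enclosing by the avoidance principle, while an interior shrinking one forces extinction---together with a Tso-type bound \cite{Tso} on $\tilde K^{\alpha}/(u-c)$ for $c<\min u$, one concludes that the circumradius and inradius of $\tilde M_\tau$ both tend to $0$, so $T^*<\infty$ and $\tilde M_\tau$ shrinks to a point $p_\ast$.

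\emph{Convergence to a geodesic sphere.} Since $\tilde M_\tau$ collapses to $p_\ast$, I would pass to geodesic normal coordinates at $p_\ast$ and rescale the space variable by the power of $(T^*-\tau)$ fixed by the homogeneity of the flow, namely $(T^*-\tau)^{-1/(n\alpha+1)}$; in these coordinates the rescaled flow is a perturbation of the normalized Euclidean flow by $K^\alpha$, the correction terms being $O(\kappa\,\mathrm{diam}(\tilde M_\tau)^2)$ and hence decaying. One can then try to run the scheme of Andrews--Guan--Ni \cite{AGN}: introduce their entropy functional (a suitably normalized average of a power of the support function), show it is monotone non-increasing along the rescaled flow up to a time-integrable error coming from the $\kappa$-correction, and deduce a uniform bound. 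The entropy bound confines the support function between two positive constants after re-centring, which with the speed estimate and a Harnack inequality gives two-sided bounds on $\tilde K$; pinching of the principal curvatures is then propagated, and Krylov--Safonov together with Schauder estimates yield uniform $C^\infty$ bounds on the rescaled hypersurfaces.

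With these estimates, every sequence of times has a subsequence along which the rescaled hypersurfaces converge in $C^\infty$, and the (almost-)monotonicity of the entropy forces each such limit to be a self-similar solution of the Euclidean $K^\alpha$-flow, i.e. a soliton $K^\alpha=c\,u$. It remains to identify the soliton with a round sphere: for $\alpha\ge\frac1n$ one can use the equality case of the entropy inequality (the entropy being minimized exactly at spheres), and for $\frac1{n+2}<\alpha<\frac1n$ one imports the soliton classification of Brendle--Choi--Daskalopoulos \cite{BCD}---the limit being a genuine Euclidean self-shrinker---or else shows the limit is centrally symmetric and invokes the uniqueness result of \cite{AGN}. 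Since the limit is independent of the subsequence, the whole rescaled flow converges in $C^\infty$ to a geodesic sphere. I expect the heart of the matter to be this second part: building an entropy that stays (almost-)monotone despite the absence of an exact scaling symmetry in a space form, and controlling the $\kappa$-correction uniformly up to $T^*$ well enough to deploy the sharp Euclidean classification; preserving convexity in $\mathbb S^{n+1}$ is the other place needing genuine care beyond the Euclidean setting.
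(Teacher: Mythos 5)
Your second part captures the paper's central idea: the rescaled flow is a perturbation of the normalized Euclidean $K^\alpha$-flow with correction factor $1+O(\mathrm{diam}^2)$, the diameter decays exponentially in the rescaled time (the paper's estimate (\ref{decayest}), obtained from the pinching bound $r_+^2\le Cr_-$ and the fixed enclosed volume), and therefore the Andrews--Guan--Ni entropy becomes monotone after adding a correction term $C e^{-\frac{2(n+1)}{2n+1}t}$; the $C^0$, $C^2$ and higher estimates and the subsequential convergence to a soliton then follow the AGN scheme, exactly as in the paper. Where you genuinely diverge is the first part: you propose to work intrinsically in $\mathbb{N}^{n+1}(\kappa)$ and preserve convexity by a tensor maximum principle, asserting that the extra $\kappa$-terms in the evolution of the Weingarten map ``have the favourable sign.'' This is not justified and is precisely the difficulty the paper is built to avoid: it uses the gnomonic-type projection $\pi_p$ (Lemma \ref{KK hat rel}), which maps geodesics to lines and converts the flow into a Euclidean flow $\hat{\tilde X}_\tau=-\psi\hat{\tilde K}^\alpha\hat\nu$ with a bounded, controlled $\psi$; the lower bound on the principal curvatures is then proved for the \emph{projected} flow (Lemma \ref{unor ki lower boud}) by a scalar maximum principle on $\log W_{11}+\frac{L}{2}r^2$, and convexity of $\tilde M_\tau$ is deduced from that via a short-time bootstrapping (Lemma \ref{lemma proj}). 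If you insist on the intrinsic route you must actually carry out the tensor maximum principle computation for $K^\alpha$ in a space form, including the gradient terms of the speed, and check both signs of $\kappa$ (for $\kappa=1$ the reaction term $-\kappa F\delta_i^j$ works \emph{against} positivity of the Weingarten map); as written this is a gap, not a proof.

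The second concrete gap is the identification of the limiting soliton for $\alpha\ge\frac1n$: you propose to use ``the equality case of the entropy inequality (the entropy being minimized exactly at spheres).'' This does not work. Monotonicity of the entropy only shows the limit is a self-similar solution $K^\alpha=\lambda u$; it does not show the entropy attains its minimum value, and the uniqueness of such solitons is exactly the hard theorem of Choi--Daskalopoulos and Brendle--Choi--Daskalopoulos (AGN could only handle the centrally symmetric case, and there is no mechanism in the flow forcing the limit to be centrally symmetric). The fix is simply to invoke \cite{BCD} for the whole range $\alpha>\frac{1}{n+2}$, which is what the paper does. Two smaller points: the paper does not use a Harnack inequality for the two-sided curvature bounds (the upper bound is a Tso-type estimate, Lemma \ref{unno K above}, and the lower bound comes from a maximum principle applied to $\log(f\hat u^l)$), and the passage from subsequential to full convergence needs the argument via the convergence of the entropy, which you should state rather than assume.
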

The theorem generalizes the known results in Euclidean space to space forms. The first statement is a generalization of \cite{Tso,C nroot}. The second statement extends results in \cite{ Guan-Ni, AGN, BCD}. 
 
Our approach to flow (\ref{unnorm flow on spaceform})  is to deduce it to a flow in the Euclidean space by proper projections.  Choosing the projection $\pi_p$ (see details in section 2), the Gauss curvature of the image satisfies (\ref{KKhat rela}). It is suffice to consider the following type of flow (the image of projection) in Euclidean space:
\begin{equation}\label{X-unnor}
\hat{\tilde{X}}_{\tau}(x,\tau)=-(1+\kappa|\hat{\tilde{X}}|^2)^{\frac{n+2}{2}\alpha+\frac{1}{2}}(1+\kappa\langle \hat{\tilde{X}},\hat\nu \rangle^2)^{-\frac{n+2}{2}\alpha+\frac{1}{2}}	\hat{\tilde{K}}^\alpha,
\end{equation}
where $\kappa=1$ when $\tilde{X}(x,\tau)$ is the flow of convex hypersurfaces in $\mathbb{S}^{n+1}$ and where $\kappa=-1$ when $\tilde{X}(x,\tau)$ is the flow of convex hypersurfaces in $\mathbb{H}^{n+1}$ (the hat distinguish these from the counterparts before the projection). It is well known that any strictly convex hypersurfaces $\hat{\tilde{M}}$ in $\mathbb{R}^{n+1}$ can be recovered completely from the support function $u$ by $\hat{\tilde{M}}=\{\nabla \hat{\tilde{u}}+\hat{\tilde{u}}x, x\in \mathbb{S}^{n}\}$, see e.g. \cite{Schneider}. Then the support function satisfies  equation
\begin{equation}\label{rn+1 unor support1}
\begin{aligned}
	\hat{\tilde{u}}_\tau(x,\tau)=&-(1+\kappa(\hat{\tilde{u}}^2+|\nabla \hat{\tilde{u}}|^2))^{\frac{n+2}{2}\alpha+\frac{1}{2}}(1+\kappa\hat{\tilde{u}}^2)^{-\frac{n+2}{2}\alpha+\frac{1}{2}}{\det}^{-\alpha}(\nabla^2 \hat{\tilde{u}}+\hat{\tilde{u}}I).
	\end{aligned}
\end{equation}
For flow (\ref{rn+1 unor support1}), we obtain  the estimates of the lower bound of principal curvature and the upper bound Gauss curvature and pinching estimate of the inner and outer radii. 

The key in our proof is an almost monotonicity formula for associated entropies considered in  \cite{ Guan-Ni, AGN}. In this respect, the normalized flow  (\ref{norm u evo general}) of (\ref{rn+1 unor support1}) will be used in section 4. A crucial observation is the  decay estimate (\ref{decayest}) in Section 4. It allows us to obtain a monotone quantity $\mathcal{E}_{\alpha}(\hat{\Omega}_t)+C(n,\alpha,\tilde{X}_0)e^{-\frac{2(n+1)}{2n+1}t}$ along the normalized flow (\ref{norm u evo general}) by modifying the monotone quantity used in \cite{Guan-Ni, AGN}. From this, we can use the methods in \cite{Guan-Ni, AGN} to obtain a uniformly lower and upper bound of support function. This in turn implies a uniform $C^2$-estimate, and to conclude that the normalized flow for any smooth initial convex body converges smoothly as $t\rightarrow \infty$ to a uniformly convex soliton. By the soliton classification result in \cite{BCD}, we obtain that the limit is a round sphere. This implies the convergence of the normalized flow in $\mathbb{N}^{n+1}(\kappa)$ for $\alpha>\frac{1}{n+2}$.

The rest of this paper are organized as follows: in Section 2, we will recall some basic facts which will be used later. In Section 3, we prove the flow (\ref{unor gene flow}) converges to a point in finite time $T^*>0$ for $\alpha>0$. As a corollary, we prove that the flow (\ref{unnorm flow on spaceform}) converges to a point at finite time $T^*$ for $\alpha>0$. In section 4, we obtain the modified monotone quantity and the a priori estimates of the normalized flow. In section 5,  we prove the normalized flow in $\mathbb{N}^{n+1}(\kappa)$  converges to a geodesic sphere centered at the extinct point $q_0$ for $\alpha>\frac{1}{n+2}$.

\vspace{.1in}

\section{Preminaries}
In this section, we present some basic facts about space forms and the stereographic type projections which will be used later.
 
Under the geodesic polar coordinates, the metric of $\mathbb{N}^{n+1}(\kappa)$ can be denoted as
\begin{equation}
	\bar{g}=d\rho^2+\phi^2(\rho)dz^2,
\end{equation} 
where $\phi(\rho)=\sin(\rho)$, $\rho\in [0,\pi)$ when $\kappa=1$; and $\phi(\rho)=\sinh(\rho)$ when $\kappa=-1$; and $dz^2$ is the standard induced metric on $\mathbb{S}^n$ in Euclidean space.

Let $\Omega$ be a convex body in $\mathbb{N}^{n+1}(\kappa)$. Suppose $\mathcal{M}=\partial \Omega$ is smooth and strictly convex, denote the metric and the unit outer normal of $\mathcal{M}$ by $g_{ij}$, and $\nu$ respectively. Let $h_{ij}$ be the second fundamental form of $\mathcal{M}$ with respect to $\nu$ and $u=\langle \phi\frac{\partial}{\partial\rho},\nu\rangle$ the support function of $\mathcal{M}$. Suppose $q\in \mathcal{M}$ and, there is an open subset $\mathcal{N}$ of $\mathcal{M}$ containing $q$ such that $\langle\frac{\partial}{\partial \rho},\nu\rangle$ is strictly positive or negative ( doesn't change sign ) in $\mathcal{N}$, then $\mathcal{N}$ can be represented as a radial graph locally. As a local radial graph, it is well-known (see e.g. \cite{Guan-Li}) that in $\mathcal{N}$
\begin{equation}\label{gij hij}
	\begin{aligned}
		&g_{ij}=\rho_i\rho_j+\phi^2\delta_{ij},\\
		&\nu=\frac{\sigma}{\omega}(\frac{\partial}{\partial \rho}-\frac{\nabla\rho}{\phi^2}),\\
		&h_{ij}=\sigma(\sqrt{\phi^2+|\nabla\rho|^2})^{-1}(-\phi\rho_{ij}+2\phi'\rho_i\rho_j+\phi^2\phi'\delta_{ij}),
	\end{aligned}
\end{equation} 
 where $\rho_i=\nabla_i\rho$ and $\nabla$ is the covariant derivative on $\mathbb{S}^n$ with respect to an orthonormal frame, $\omega=\frac{\sqrt{\phi^2+|\nabla\rho|^2}}{\phi}$, $\sigma=1$ when $\langle\frac{\partial}{\partial \rho},\nu\rangle>0$, and $\sigma=-1$ when $\langle\frac{\partial}{\partial\rho},\nu\rangle<0$. Therefore, the Gauss curvature of $\mathcal{N}$ is given by 
\begin{equation}\label{K}
	K=\frac{\det h_{ij}}{\det g_{ij}}=\frac{\sigma^n\det(-\phi\rho_{ij}+2\phi'\rho_i\rho_j+\phi^2\phi'\delta_{ij})}{(\phi^2+|\nabla\rho|^2)^{\frac{n+2}{2}}\phi^{2n-2}}.
\end{equation}

To investigate the flow (\ref{unnorm flow on spaceform}) in $\mathbb{N}^{n+1}(\kappa)$, we project it to the tangent plane of $\mathbb{N}^{n+1}(\kappa)$ at a certain point. The sterographic projection can be found in many references,  see e.g. \cite{Gerhardt book, GLW}. Here we describe it briefly for completeness. 

When $\kappa=1$, $\mathbb{N}^{n+1}(\kappa)=\mathbb{S}^{n+1}$. For any $p\in\mathbb{S}^{n+1}$, denote $\mathcal{H}(p)=\{z\in\mathbb{S}^{n+1}:d_{\mathbb{S}^{n+1}}(p,z)<\frac{\pi}{2}\}$ the open hemisphere centered at $p$.  We consider the projection $\pi_p$ of $\mathcal{H}(p)$ onto the tangent plane $L_p$ of $\mathbb{S}^{n+1}\subset\mathbb{R}^{n+2}$ at $p$, defined by
\begin{equation}
	\pi_p:z\in\mathcal{H}(p)\mapsto \frac{z}{ \langle z, p\rangle}\in L_p.
\end{equation}
For a strictly convex hypersurface $\mathcal{M}\subset\mathbb{S}^{n+1}$, it must enclose a convex body and is contained in a hemisphere $\mathcal{H}$, see, for example \cite{dW}. Suppose $\mathcal{H}=\mathcal{H}(p)$ is centered at $p$, we can use the above projection $\pi_p$ to project $\mathcal{M}$ onto $L_p$. 

When $\kappa=-1$, $\mathbb{N}^{n+1}(\kappa)=\mathbb{H}^{n+1}$ is the hyperbolic space. For any point $p\in\mathbb{H}^{n+1}$, we can consider $\mathbb{H}^{n+1}$ as a submanifold of $\mathbb{R}^{n+2}$ with vertex $p$,
\begin{align*}
 \mathbb{H}^{n+1}=\{(x_1,\dots,x_{n+1},x_{n+2})\in\mathbb{R}^{n+2}|x_{n+2}^2-\sum_{i=1}^{n+1}x_i^2=1,x_{n+2}>0\}, 	
 \end{align*}
 and 
 \begin{align*}
 	p=(0,\dots,0,1).
 \end{align*}
 Let $L_p$ be the tangent plane of $\mathbb{H}^{n+1}$ at $p$, we define $\pi_p$ as 
 \begin{equation}
 	\pi_p:z\in\mathbb{H}^{n+1}\mapsto \frac{z}{\langle z, p\rangle}\in L_p.
 \end{equation}
 Note that if $z=(x_1,\dots,x_{n+1},x_{n+2})\in\mathbb{H}^{n+1}$, then $\pi_p(z)=(\frac{x_1}{x_{n+2}},\dots,\frac{x_{n+1}}{x_{n+2}},1)$. Thus, $\pi_p(\mathbb{H}^{n+1})$ is contained in the unit ball $B^{n+1}(p,1)$ of $L_p$ centered at $p$.
\begin{lemma}\label{KK hat rel}
	 Let $\mathcal{M}$ be a closed smooth strictly convex hypersurface in $\mathcal{N}^{n+1}(\kappa)$, $\Omega$ be the set enclosed by $\mathcal{M}$. Let $\pi:=\pi_p$ be defined as above and $\hat{\Omega}=\pi(\Omega)\subset L_p$ be the image of $\Omega$ under the projection, $\hat{u}:\mathbb{S}^n\to\mathbb{R}$, $x\mapsto\sup\{\langle y, x\rangle:y\in\hat{\Omega}\}$ be the support function of $\hat{\Omega}$, and $\hat{K}$ be the Gauss curvature of $\hat{\mathcal{M}}:=\partial\hat{\Omega}$. Then
	\begin{equation}\label{KKhat rela}
		K(q)=\Big(\frac{1+\kappa(\hat{u}^2+|\nabla \hat{u}|^2)}{1+\kappa\hat{u}^2}\Big)^{\frac{n+2}{2}}\hat{K}(\pi(q) ),\qquad\forall q\in\mathcal{M}.
	\end{equation} 
	where $x\in\mathbb{S}^n$ is the unique point such that $\pi(q)=\nabla u+ux$.
\end{lemma}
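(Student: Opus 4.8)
The plan is to compare $K$ and $\hat K$ by representing both $\mathcal M$ and $\hat{\mathcal M}$ as radial graphs over the same unit sphere $\mathbb S^n$ of directions emanating from $p$, using that $\pi_p$ maps each geodesic ray from $p$ onto the corresponding Euclidean ray from $p$ in $L_p$. Fix $q\in\mathcal M$; since $\mathcal M$ is strictly convex, the set of points at which the radial geodesic from $p$ is tangent to $\mathcal M$ is closed with empty interior, so off this set $\mathcal M$ is locally a radial graph $\rho=\rho(\xi)$ and formulas \eqref{gij hij}--\eqref{K} apply, while the general case follows because both sides of \eqref{KKhat rela} are smooth in $q$. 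From the explicit form of $\pi_p$ --- for $\kappa=1$, $z=\cos\rho\,p+\sin\rho\,\xi\mapsto p+\tan\rho\,\xi$, and for $\kappa=-1$, $z=\cosh\rho\,p+\sinh\rho\,\xi\mapsto p+\tanh\rho\,\xi$ --- one sees that $\langle z,p\rangle=\phi'(\rho)$ and that $\hat{\mathcal M}$ is the Euclidean radial graph $r=r(\xi)$ in $L_p$ with
\begin{equation}\label{eq:r-rho}
r(\xi)=\frac{\phi(\rho(\xi))}{\phi'(\rho(\xi))},
\end{equation}
where $\phi=\sin$ (resp.\ $\sinh$) obeys the two identities $\phi''=-\kappa\phi$ and $(\phi')^2+\kappa\phi^2\equiv1$, which will drive the whole computation; note also $\phi'>0$ on the relevant range and that $\pi_p$ leaves the sign $\sigma$ in \eqref{gij hij} unchanged, being an increasing reparametrization of each ray.

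Next I would carry out the change of variables $\rho\mapsto r$ in the Gauss curvature formula. Differentiating \eqref{eq:r-rho} and using $\phi''=-\kappa\phi$ together with $1-\kappa\phi^2=(\phi')^2$ gives $\frac{dr}{d\rho}=\frac{1}{(\phi')^2}$ and $\frac{d^2r}{d\rho^2}=\frac{2\kappa\phi}{(\phi')^3}$, hence $r_i=\frac{\rho_i}{(\phi')^2}$ and $r_{ij}=\frac{2\kappa\phi}{(\phi')^3}\rho_i\rho_j+\frac{1}{(\phi')^2}\rho_{ij}$ on $\mathbb S^n$. Plugging these into the formula for $\hat K$, namely \eqref{K} with $\phi(\rho)=\rho$, the numerator matrix collapses --- using $2(1-\kappa\phi^2)=2(\phi')^2$ --- to
\[
-r\,r_{ij}+2r_ir_j+r^2\delta_{ij}=\frac{1}{(\phi')^3}\bigl(-\phi\rho_{ij}+2\phi'\rho_i\rho_j+\phi^2\phi'\delta_{ij}\bigr),
\]
i.e.\ exactly $(\phi')^{-3}$ times the matrix appearing in \eqref{K}; moreover $r^{2n-2}=(\phi')^{-(2n-2)}\phi^{2n-2}$ and $r^2+|\nabla r|^2=(\phi')^{-4}\bigl(\phi^2(\phi')^2+|\nabla\rho|^2\bigr)$. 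Collecting all powers of $\phi'$ then yields
\begin{equation}\label{eq:KKhat-raw}
K=\left(\frac{\phi^2(\phi')^2+|\nabla\rho|^2}{(\phi')^2\,(\phi^2+|\nabla\rho|^2)}\right)^{\frac{n+2}{2}}\hat K.
\end{equation}

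It then remains to rewrite the bracket in \eqref{eq:KKhat-raw} in terms of $\hat u$ evaluated at the outer unit normal $x$ of $\hat{\mathcal M}$ at $\pi(q)$. Since the boundary point of $\hat\Omega$ with outer normal $x$ is $\hat q=\nabla\hat u(x)+\hat u(x)x$ with $\nabla\hat u(x)\perp x$, one has $\hat u^2+|\nabla\hat u|^2=|\hat q|^2=r^2=\phi^2/(\phi')^2$, so $1+\kappa(\hat u^2+|\nabla\hat u|^2)=1/(\phi')^2$ by $(\phi')^2+\kappa\phi^2=1$; and the Euclidean analogue of \eqref{gij hij} gives $\hat u=\sigma\phi^2\bigl(\phi^2(\phi')^2+|\nabla\rho|^2\bigr)^{-1/2}$, so $1+\kappa\hat u^2=\bigl(\phi^2+|\nabla\rho|^2\bigr)\bigl(\phi^2(\phi')^2+|\nabla\rho|^2\bigr)^{-1}$ after using $\phi^2(\phi')^2+\kappa\phi^4=\phi^2$. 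Dividing these two identities reproduces precisely the bracket in \eqref{eq:KKhat-raw}, which is \eqref{KKhat rela}.

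I expect the only genuine difficulty to be the bookkeeping in \eqref{eq:KKhat-raw}: one must verify that the powers of $\phi'$ coming from the determinant of the $n\times n$ numerator matrix, from $r^{2n-2}$, and from $\bigl(r^2+|\nabla r|^2\bigr)^{(n+2)/2}$ assemble into one clean factor, and this works only because $1-\kappa\phi^2=(\phi')^2$ converts the coefficient $2(1-\kappa\phi^2)$ of $\rho_i\rho_j$ into exactly $2\phi'$ once $(\phi')^{-3}$ is pulled out. The remaining ingredients --- the density/continuity reduction to points where the local radial-graph representation exists, and the invariance of the orientation sign $\sigma$ under $\pi_p$ --- are routine.
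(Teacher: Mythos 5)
Your proposal is correct and follows essentially the same route as the paper: write both $\mathcal{M}$ and $\hat{\mathcal{M}}$ as radial graphs over $\mathbb{S}^n$ with $r=\phi/\phi'$, use $\phi''=-\kappa\phi$ and $(\phi')^2+\kappa\phi^2=1$ to see that the numerator matrices in \eqref{K} and \eqref{Khat} agree up to the factor $(\phi')^{-3}$, convert the resulting ratio into the support-function form via $r^2=\hat u^2+|\nabla\hat u|^2$ and $\hat u=\sigma r^2/\sqrt{r^2+|\nabla r|^2}$, and extend by density/continuity across the tangency set (which the paper additionally verifies has codimension one). The computations, including \eqref{eq:KKhat-raw}, check out.
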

\begin{proof} 
	For the case $\kappa=1$, \cite{GLW} gives a detailed proof. We give a proof for the general case of space forms.  
	
	We identify the tangent plane $L_p$ with $\mathbb{R}^{n+1}$ and choose $p$ as the origin of $\mathbb{R}^{n+1}$. Recall that $\rho(q)$ is the geodesic distance from $q$ to the origin, let $r$ be the Euclidean distance from the origin. It is easy to see that
	\begin{equation}\label{r phi}
		r=\frac{\phi}{\phi'}=\left\{
		\begin{split}
			&\tan(\rho),\quad \kappa=1;\\
			&\tanh(\rho),\quad \kappa=-1.
		\end{split}
		\right.
	\end{equation}
	Since $\mathcal{M}$ is  strictly convex, we claim that the co-dimension of the set $\mathcal{Z}=\{q'\in\mathcal {M}|u(q')=0\}$ is one. In fact, let $\nabla^g$ denote the gradient of $\mathcal{M}$, $\Phi(\rho)=\int_0^\rho\phi(s)ds$, then $\nabla^{g}_{i}u=h^j_i \nabla^g_j\Phi$ (see e.g. \cite{Guan-Li}). If $\nabla^gu=0$, then $\nabla^g\Phi=0$. On the other hand, $u^2+|\nabla^g \Phi|^2=\phi^2$, so $\{q'\in\mathcal {Z}|\nabla^gu(q')=0\}=\{q'\in\mathcal {M}|\phi(q')=0\}$, i.e. a single point. If $\nabla^gu\neq0$, then the set $\{q'\in\mathcal {Z}|\nabla^gu(q')\neq0\}$ is co-dimension one by the implicit function theorem. For $q\in \mathcal{M}\setminus\mathcal{Z}$, there is a neighbourhood $\mathcal{N}$ of $q$ in $\mathcal{M}$ such that $u>0\,(<0)$ in $\mathcal{N}$, and $\mathcal{N}$ can be represented as local radial graph. Moreover, $\hat{u}>0\,(<0)$ in $\hat{\mathcal{N}}=\pi(\mathcal{N})$ if and only if $u>0\,(<0)$ in $\mathcal{N}$, and $\hat{\mathcal{N}}$ can be represented as a radial graph in the polar coordinates of $\mathbb{R}^{n+1}$ with origin $p$.

	Similar to (\ref{gij hij}) and (\ref{K}), we have in $\hat{\mathcal{N}}$ that 
	\begin{equation}\label{ghatij hhatij}
	\begin{aligned}
		&\hat{g}_{ij}=r_ir_j+r^2\delta_{ij},\\
		&\hat{\nu}=\frac{\sigma r}{\sqrt{r^2+|\nabla r|^2}}(\frac{\partial }{\partial r}-\frac{\nabla r}{r^2}),\\
		&\hat{h}_{ij}=\sigma(\sqrt{r^2+|\nabla r|^2})^{-1}(-rr_{ij}+2r_ir_j+r^2\delta_{ij}),
	\end{aligned}
\end{equation} 
\begin{equation}\label{Khat}
	\hat{K}=\frac{\det \hat{h}_{ij}}{\det \hat{g}_{ij}}=\frac{\sigma ^n\det(-rr_{ij}+2r_ir_j+r^2\delta_{ij})}{(r^2+|\nabla r|^2)^{\frac{n+2}{2}}r^{2n-2}},
\end{equation}
 where $\hat{g}_{ij}, \hat{\nu},\hat{h}_{ij}$ and $\hat{K}$ are the metric, the unit outer normal, the second fundamental form, and the Gauss curvature of $\hat{\mathcal{M}}$ respectively.
 
On the other hand, by (\ref{r phi}) and the fact that $\phi'^2-\phi\phi''=1$, we get
	\begin{equation}
	\begin{aligned}
		K=\frac{\sigma ^n\det(-rr_{ij}+2r_ir_j+r^2\delta_{ij})}{r^{2n-2}(r^2+\phi'^2|\nabla r|^2)^{\frac{n+2}{2}}}.
	\end{aligned}	
	\end{equation}
	Moreover, since $\phi(\rho)=\sinh(\rho)$ for $\kappa=-1$, $\phi(\rho)=\sin(\rho)$ for $\kappa=1$, thus $r^2=\frac{\phi^2}{\phi'^2}=\kappa(\frac{1}{\phi'^2}-1)$, i.e.
	\begin{equation}
		\phi'^2=\frac{1}{1+\kappa r^2}.
	\end{equation}
	Comparing this to (\ref{Khat}), we get
		\begin{equation}\label{KKhat 1}
		\frac{K}{\hat{K}}=\Big(\frac{r^2+|\nabla r|^2}{r^2+\frac{|\nabla r|^2}{1+\kappa r^2}}\Big)^{\frac{n+2}{2}}.
	\end{equation}
	It is well known that for $x\in\mathbb{S}^n$, $x$ is an unit outer normal of hypersurface defined by $\nabla \hat{u}+\hat{u}x\in\hat{\mathcal{N}}$, thus
	\begin{equation}
		r^2=\hat{u}^2+|\nabla \hat{u}|^2,
	\end{equation}
	\begin{equation}
		\hat{u}(x)=\langle r\frac{\partial}{\partial r}, \hat{\nu}\rangle=\frac{\sigma r^2}{\sqrt{r^2+|\nabla r|^2}}.
	\end{equation}
	Plugging the above two equations into (\ref{KKhat 1}), we get
	\begin{equation}
		\frac{K}{\hat{K}}=\Big(\frac{1+\kappa(\hat{u}^2+|\nabla \hat{u}|^2)}{1+\kappa\hat{u}^2}\Big)^{\frac{n+2}{2}}.
	\end{equation} 
	This proves (\ref{KKhat rela}) for $q\in\mathcal{M}\setminus\mathcal{Z}$. Since $\mathcal{M}\setminus\mathcal{Z}$ is dense in $\mathcal{M}$,  (\ref{KKhat rela}) holds for $q\in\mathcal{M}$.	   
\end{proof}

Let $\tilde{X}(\tau)$ be a family of hypersurfaces evolving by the flow (\ref{unnorm flow on spaceform}). Suppose we can represent $\tilde{X}(\tau)$ as $\{(\tilde{\rho}(z,t)z,z)\}$ as a radial graph locally over $\mathbb{S}^n$ in the polar coordinates with center $p$, then we obtain the scalar curvature flow equation (locally)
\begin{equation}\label{uno rho space}
	\tilde{\rho}_t=-\tilde{K}^\alpha \tilde{\omega},
\end{equation}
where $\tilde{\omega}=\frac{\sqrt{\phi(\tilde{\rho})^2+|\nabla\tilde{\rho}|^2}}{\phi(\tilde{\rho})}$.

Suppose that (\ref{unnorm flow on spaceform}) exists on $[0,T^*)$. Then we can project $\tilde{X}(\tau)$ into $L_{p_0}$ through the projection $\pi_{p_0}$, where $p_0$ is the outer center of $\tilde{X}(0)$.
 
\begin{lemma}\label{lemma proj}
  The image $\hat{\tilde{X}}(\tau):=\pi_{p_0}(\tilde{X}(\tau))$ evolves by 
\begin{equation}\label{unor X rn+1 evo}
	\hat{\tilde{X}}_\tau=-(1+\kappa|\hat{\tilde{X}}|^2)^{\frac{n+2}{2}\alpha+\frac{1}{2}}(1+\kappa\langle\hat{\tilde{X}},\hat{\nu}\rangle^2)^{-\frac{n+2}{2}\alpha+\frac{1}{2}}\hat{\tilde{K}}^\alpha(x,\tau)\hat{\nu},\qquad\tau\in[0,T^*),
\end{equation} 
where $\hat{\tilde{K}}$ and $\hat{\nu}$ are the Gauss curvature and the unit outer normal of $\hat{\tilde{X}}(\tau)$ respectively. The support function satisfies
\begin{equation}\label{rn+1 unor support}
\begin{aligned}
	\hat{\tilde{u}}_\tau(x,\tau)
	=&-(1+\kappa(\hat{\tilde{u}}^2+|\nabla\hat{\tilde{u}}|^2))^{\frac{n+2}{2}\alpha+\frac{1}{2}}(1+\kappa\hat{\tilde{u}}^2)^{-\frac{n+2}{2}\alpha+\frac{1}{2}}\hat{\tilde{K}}^\alpha(x,\tau),\qquad\tau\in [0,T^*).
\end{aligned}	
\end{equation}	
\end{lemma}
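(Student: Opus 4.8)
The plan is to push the scalar flow equation (\ref{uno rho space}) on $\mathbb N^{n+1}(\kappa)$ forward through $\pi_{p_0}$ and then to rewrite it using the algebraic identities already established in the proof of Lemma~\ref{KK hat rel}. First I would record the structural facts: $\pi_{p_0}$ is a diffeomorphism of $\mathcal{H}(p_0)$ (resp. of all of $\mathbb H^{n+1}$) onto an open subset of $L_{p_0}\cong\mathbb R^{n+1}$ carrying geodesics to straight lines, hence convex bodies to convex bodies and exterior normals to exterior normals; so, provided $\tilde M_\tau$ stays inside this domain for every $\tau\in[0,T^*)$ — this is where the choice of $p_0$ as the outer center enters — the images $\hat{\tilde M}_\tau:=\pi_{p_0}(\tilde M_\tau)$ form a smooth family of strictly convex hypersurfaces in $\mathbb R^{n+1}$. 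The geometric evolution of $\tilde M_\tau$ is governed by the inward normal speed $\tilde K^\alpha$; consequently $\hat{\tilde M}_\tau$ evolves with some inward normal speed $\hat F$ (possibly with a tangential term in the pushed-forward parametrization, since $\pi_{p_0}$ is not conformal), and the whole lemma amounts to identifying $\hat F$ and then translating the result into an equation for the support function.

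To compute $\hat F$ I would work where $\tilde M_\tau$ is a radial graph $\tilde\rho(z,\tau)$ over an open subset of $\mathbb S^n$ with center $p_0$ (by strict convexity this holds on a dense open set, and globally if $p_0$ lies in the interior of every $\Omega_\tau$). There (\ref{uno rho space}) reads $\partial_\tau\tilde\rho=-\tilde K^\alpha\tilde\omega$. Under $\pi_{p_0}$ the angular coordinate is unchanged and the radius becomes $\hat{\tilde r}=\phi(\tilde\rho)/\phi'(\tilde\rho)$ by (\ref{r phi}); differentiating and using $\phi'^2-\phi\phi''=1$ together with $\phi'^2(1+\kappa\hat{\tilde r}^2)=1$ yields $\partial_\tau\hat{\tilde r}=(1+\kappa\hat{\tilde r}^2)\partial_\tau\tilde\rho$. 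From the proof of Lemma~\ref{KK hat rel} one also has $\phi(\tilde\rho)=\phi'\hat{\tilde r}$ and $|\nabla\tilde\rho|=\phi'^2|\nabla\hat{\tilde r}|$, hence $\tilde\omega=\hat{\tilde r}^{-1}\sqrt{\hat{\tilde r}^2+\phi'^2|\nabla\hat{\tilde r}|^2}$, while (\ref{KKhat 1}) expresses $\tilde K^\alpha$ through $\hat{\tilde K}^\alpha$, $\hat{\tilde r}$ and $|\nabla\hat{\tilde r}|$. Substituting, and recognizing $\hat{\tilde r}=|\hat{\tilde X}|$ together with $\langle\hat{\tilde X},\hat\nu\rangle=\hat{\tilde u}=\sigma\hat{\tilde r}^2(\hat{\tilde r}^2+|\nabla\hat{\tilde r}|^2)^{-1/2}$ (again from the proof of Lemma~\ref{KK hat rel}), so that $\hat{\tilde r}^2+|\nabla\hat{\tilde r}|^2$ and $\hat{\tilde r}^2+\phi'^2|\nabla\hat{\tilde r}|^2$ can be written in terms of $|\hat{\tilde X}|$ and $\langle\hat{\tilde X},\hat\nu\rangle$, the equation collapses to $\partial_\tau\hat{\tilde r}=-\hat F\,\hat\omega$ with $\hat\omega=\hat{\tilde r}^{-1}\sqrt{\hat{\tilde r}^2+|\nabla\hat{\tilde r}|^2}$ and $\hat F=(1+\kappa|\hat{\tilde X}|^2)^{\frac{n+2}{2}\alpha+\frac{1}{2}}(1+\kappa\langle\hat{\tilde X},\hat\nu\rangle^2)^{-\frac{n+2}{2}\alpha+\frac{1}{2}}\hat{\tilde K}^\alpha$; the exponents $1+(\frac{n+2}{2}\alpha-\frac{1}{2})$ and $-(\frac{n+2}{2}\alpha-\frac{1}{2})$ produced by the simplification are precisely those in (\ref{unor X rn+1 evo}).

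Since $\partial_\tau\hat{\tilde r}=-\hat F\hat\omega$ is exactly the radial-graph form of the Euclidean flow with inward normal speed $\hat F$, and this identity holds on a dense subset hence, by smoothness, everywhere, the family $\hat{\tilde M}_\tau$ evolves with normal speed $\hat F$; choosing the parametrization that annihilates the tangential component gives (\ref{unor X rn+1 evo}). For the support function I would invoke the standard formula $\partial_\tau\hat{\tilde u}(x,\tau)=-\hat F$, where $\hat F$ is evaluated at the point of $\hat{\tilde M}_\tau$ with outer normal $x$; there $\hat{\tilde X}=\nabla\hat{\tilde u}+\hat{\tilde u}x$ and $\hat\nu=x$, so $|\hat{\tilde X}|^2=\hat{\tilde u}^2+|\nabla\hat{\tilde u}|^2$ and $\langle\hat{\tilde X},\hat\nu\rangle=\hat{\tilde u}$, and substituting these into the expression for $\hat F$ yields (\ref{rn+1 unor support}). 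I expect the only genuine obstacles to be bookkeeping: confirming that $\tilde M_\tau$ stays in the image domain of $\pi_{p_0}$ for every $\tau\in[0,T^*)$, so that the projection is legitimate throughout the flow, and extending the computed equation across the set where the radial-graph representation degenerates by the same density and continuity argument used at the end of the proof of Lemma~\ref{KK hat rel}; the intermediate algebraic simplification, though it is the crux of the identity, is routine once the relations from Lemma~\ref{KK hat rel} are available.
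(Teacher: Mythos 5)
Your proposal is correct and follows essentially the same route as the paper: project the radial-graph equation (\ref{uno rho space}) via $r=\phi/\phi'$, use the identities $r^2=\hat{\tilde{u}}^2+|\nabla\hat{\tilde{u}}|^2$, $\hat{\tilde{u}}=\sigma r^2(r^2+|\nabla r|^2)^{-1/2}$ and the curvature relation of Lemma \ref{KK hat rel} to identify the Euclidean normal speed, and then extend from the dense set where the radial-graph representation is valid. The only ingredient the paper supplies that you pass over is the bootstrap guaranteeing that the derivation applies on all of $[0,T^*)$ rather than just a short interval: the paper first gets the equation on $[0,\delta)$, invokes Lemma \ref{unor ki lower boud} to obtain a uniform positive lower bound on the principal curvatures up to time $\delta$ (so strict convexity, and hence the support-function parametrization and the positivity of $\hat{\tilde{K}}$, persists), and then iterates; you should make this persistence-of-convexity step explicit rather than assuming the family stays strictly convex throughout.
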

\begin{proof}
Since $\tilde{X}_0$ is strictly convex, $\tilde{X}(\tau)$ will stay strictly convex on a short time interval $[0,\delta)$. Thus, there is a set $\mathcal{Z}(\tau)\subset\tilde{X}(\tau)$ of measure zero, such that $\tilde{X}(\tau)$ can be represented as local radial graph and (\ref{uno rho space}) holds on $\tilde{X}(\tau)\setminus\mathcal{Z}(\tau)$. Plugging (\ref{r phi}) into (\ref{uno rho space}), we get the evolution equation of $\tilde{r}$ 
\begin{equation}\label{rn+1 unor radi}
	\tilde{r}_\tau=-\sqrt{(1+\kappa\tilde{r}^{2})^2+\frac{|\nabla \tilde{r}|^{2}}{\tilde{r}^2}(1+\kappa\tilde{r}^2)}\tilde{{K}}^\alpha
\end{equation}
holds in $\tilde{X}(\tau)\setminus{\mathcal{Z}}(\tau)$ for $\tau\in[0,\delta)$.

Note that $\frac{\hat{\tilde{u}}(x,\tau)_\tau}{\hat{\tilde{u}}}=\frac{\tilde{r}(z,\tau)_\tau}{\tilde{r}}$, by Lemma \ref{KK hat rel}, we obtain 
\begin{align*}
	\hat{\tilde{u}}_\tau(x,\tau)=&-(1+\kappa(\hat{\tilde{u}}^2+|\nabla \hat{\tilde{u}}|^2))^{\frac{n+2}{2}\alpha+\frac{1}{2}}(1+\kappa\hat{\tilde{u}}^2)^{-\frac{n+2}{2}\alpha+\frac{1}{2}}\hat{\tilde{K}}^\alpha(x,\tau)
	\end{align*}
holds in $\tilde{X}(\tau)\setminus{\mathcal{Z}}(\tau)$ for $\tau\in[0,\delta)$. This is the evolution equation of the support funtion when the hypersurfaces evolve by (\ref{unor X rn+1 evo}). By applying Lemma \ref{unor ki lower boud} with $T=\delta$, we obtain the principal curvatures of $\hat{\tilde{X}}$ have a  uniform positive lower bound $\varepsilon_0$ depending only on $n,\alpha,\tilde{X}_0$. This implies that $\tilde{X}(\tau)$ is uniformly convex on $[0,\delta]$. Then repeating this process, (\ref{unor X rn+1 evo}) and (\ref{rn+1 unor support}) hold on $\tilde{X}(\tau)\setminus\mathcal{Z}(\tau)$ for any $\tau\in[0,T^*)$. Since $\mathcal{Z}(\tau)$ is of measure zero for any fixed $\tau\in[0,T^*)$, this finishes the proof.       
\end{proof}

\section{Convergence to a point}
In this section, we prove that the flow (\ref{unnorm flow on spaceform}) converges to a point in finite time $T^*>0$. This is proved by proving the image flow of its projection in $\mathbb{R}^{n+1}$ converges to a point at $T^*$. More generally, we prove  the following theorem. 
\begin{theorem}\label{gene theorem}
	Suppose $\{\hat{\tilde{X}}(\tau)\}\subset\mathbb{R}^{n+1}$ is a family of hypersurfaces in $\mathbb{R}^{n+1}$ evolving by
	\begin{equation}\label{unor gene flow}
		\hat{\tilde{X}}_\tau=-\psi(\langle\hat{\tilde{X}},\hat{\nu}\rangle,x,\nabla \langle\hat{\tilde{X}},\hat{\nu}\rangle)\hat{\tilde{K}}^\alpha(x,\tau)\hat{\nu},
	\end{equation}
	with $\hat{\tilde{X}}(0)=\hat{\tilde{X}}_0$ strictly convex, where $\hat{\tilde{K}},\hat{\nu}$ are the Gauss curvature, and unit outer normal of $\hat{\tilde{X}}$ respectively, $\alpha>0$ is a positive constant, $\psi:(\mathbb{R}\times T\mathbb{S}^n)\to\mathbb{R}^n$ is a smooth function satisfying
	\begin{equation}\label{gene psi cond}
	\begin{aligned}
		  \frac{1}{A}\leq \psi\leq A,\\
		  \|\psi\|_{C^2}\leq A,
		 \end{aligned}
		 	\end{equation}
	for some positive constant $A>0$ as long as the flow (\ref{unor gene flow}) exists. Then the flow (\ref{unor gene flow}) converges to a point in finite time $T^*>0$ with $T^*$ depending only on $n,\alpha,\hat{\tilde{X}}_0$ and $A$.
\end{theorem}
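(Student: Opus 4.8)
The plan is to follow the classical scheme of Tso \cite{Tso} and Chow \cite{C nroot}: reduce to a scalar parabolic equation for the support function, prove finite extinction by comparison, and establish two a priori estimates---a lower bound for the principal curvatures and a Tso-type upper bound for the Gauss curvature---from which contraction to a point follows. Writing $\hat{\tilde X}(\tau)$ in terms of its support function $u(\cdot,\tau)$ on $\mathbb S^n$ and using that at the point with outer normal $x$ one has $\langle\hat{\tilde X},\hat\nu\rangle=u$, $\nabla\langle\hat{\tilde X},\hat\nu\rangle=\nabla u$ and $\hat{\tilde K}^{-1}=\det(\nabla^2u+uI)$, the flow (\ref{unor gene flow}) becomes
\begin{equation*}
  u_\tau=-\psi(u,x,\nabla u)\,{\det}^{-\alpha}(\nabla^2u+uI).
\end{equation*}
Strict convexity of $\hat{\tilde X}_0$ makes this equation uniformly parabolic near $\tau=0$, so standard parabolic theory yields a unique smooth solution on a maximal interval $[0,T^*)$ with $T^*>0$. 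Since $u_\tau=-\psi\hat{\tilde K}^\alpha<0$ pointwise, the convex bodies $\hat{\tilde\Omega}_\tau$ are nested; and comparing $\hat{\tilde\Omega}_\tau$ with round spheres evolving by the slowest admissible speed $A^{-1}\hat{\tilde K}^\alpha$ (whose radius obeys $R'=-A^{-1}R^{-n\alpha}$) shows, via the avoidance principle, that $\hat{\tilde\Omega}_\tau$ lies inside a ball which disappears by time $\tfrac{A}{n\alpha+1}R_0^{n\alpha+1}$, where $R_0$ is the circumradius of $\hat{\tilde X}_0$. Hence $T^*<\infty$, depending only on $n$, $\alpha$, $A$ and $\hat{\tilde X}_0$.

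The core of the argument is two estimates valid on $[0,T^*)$. The first is a uniform positive lower bound $\varepsilon_0=\varepsilon_0(n,\alpha,A,\hat{\tilde X}_0)$ for the principal curvatures, obtained by applying the maximum principle to the evolution equation of the largest eigenvalue of $\nabla^2u+uI$; the extra first- and zeroth-order terms produced by the dependence of $\psi$ on $u$, $\nabla u$ and $x$ are absorbed using $A^{-1}\le\psi\le A$ and $\|\psi\|_{C^2}\le A$. The second is a Tso-type bound: given an origin $O$ and a constant $c>0$ with $u_O(\cdot,\tau)\ge 2c$ on a subinterval $[\tau_0,T^*)$, the quantity $W=\psi\,\hat{\tilde K}^\alpha/(u_O-c)$ satisfies a parabolic differential inequality whose reaction term has the favourable sign once $W$ is large, so that $\max_xW(\cdot,\tau)$---and hence $\hat{\tilde K}$---stays bounded on $[\tau_0,T^*)$; again the $\psi$-terms are dominated using $\|\psi\|_{C^2}\le A$.

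These estimates combine to give contraction to a point. If $\hat{\tilde K}$ stayed bounded as $\tau\to T^*$, then together with $\varepsilon_0$ (which also bounds the largest principal curvature, since the principal curvatures multiply to $\hat{\tilde K}$) we would obtain uniform $C^2$-bounds for $u$; Evans--Krylov and Schauder estimates (available since $-\det^{-\alpha}$ is concave on positive definite matrices) then give uniform $C^\infty$-bounds, so $\hat{\tilde X}(\tau)$ converges smoothly to a strictly convex hypersurface and the flow extends past $T^*$, contradicting maximality. Hence $\max\hat{\tilde K}\to\infty$ as $\tau\to T^*$, which by the Tso-type bound forces the inradius $\rho_-(\tau)\to0$: otherwise $\bigcap_{\tau<T^*}\hat{\tilde\Omega}_\tau$ would contain a fixed ball $B_{2c}(O)$, so $u_O(\cdot,\tau)\ge 2c$ for all $\tau$ and $\hat{\tilde K}$ would remain bounded. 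Finally, $\varepsilon_0$ says the radii of curvature of $\hat{\tilde\Omega}_\tau$ are $\le R:=1/\varepsilon_0$, so each $\hat{\tilde\Omega}_\tau$ is an intersection of balls of radius $R$ (a standard fact for convex bodies whose radii of curvature are bounded above); for such a body, if $a,b$ realize the diameter $d$ and $m=\tfrac12(a+b)$, then every defining center $c$ satisfies $|m-c|^2\le R^2-\tfrac14d^2$, so $B_{d^2/(8R)}(m)$ lies in the intersection and $d^2\le 8R\,\rho_-$. Therefore $\mathrm{diam}(\hat{\tilde\Omega}_\tau)\to0$ as $\tau\to T^*$, i.e.\ the flow (\ref{unor gene flow}) contracts to a point in finite time $T^*$. (Alternatively, one may invoke a pinching estimate relating the inner and outer radii, which also gives $\rho_+\to0$ from $\rho_-\to0$.)

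I expect the main obstacle to be the pair of a priori estimates of the second paragraph: because $\psi$ genuinely depends on $\langle\hat{\tilde X},\hat\nu\rangle$ and its gradient, the evolution equations for the curvature and for the Tso functional carry lower-order terms with no analogue in the Euclidean Gauss curvature flow, and the hypotheses $A^{-1}\le\psi\le A$, $\|\psi\|_{C^2}\le A$ are exactly what is needed to dominate them in the maximum principle. By contrast, the reduction, the extinction-time bound, and the concluding geometric argument are comparatively routine.
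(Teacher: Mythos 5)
Your proposal is correct and follows essentially the same route as the paper: reduction to the support-function equation, a maximum-principle lower bound for the principal curvatures, a Tso-type bound for $\psi\hat{\tilde K}^{\alpha}$ with pivot $u_{O}-c$, a pinching/extension argument to get contraction to a point, and a shrinking-sphere barrier (using $\psi\ge 1/A$) for finiteness of $T^{*}$. The only cosmetic difference is that you prove the inradius--circumradius pinching $d^{2}\le 8R\,\rho_{-}$ directly from the curvature lower bound, whereas the paper cites it as Lemma 2.2 of Chou--Wang.
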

When we consider the flow (\ref{unor X rn+1 evo}), $\psi=(1+\kappa|\hat{\tilde{X}}|^2)^{\frac{n+2}{2}\alpha+\frac{1}{2}}(1+\kappa\langle\hat{\tilde{X}},\hat{\nu}\rangle^2)^{-\frac{n+2}{2}\alpha+\frac{1}{2}}$ is uniformly bounded from below, since the flow is contracting.
\begin{corollary}\label{coro to point}
	The flow (\ref{unnorm flow on spaceform}) converges to a point in finite time $T^*$ with $T^*$ depending only on $\tilde{X}_0,n,\alpha$.
\end{corollary}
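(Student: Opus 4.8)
The plan is to reduce the flow (\ref{unnorm flow on spaceform}) in $\mathbb{N}^{n+1}(\kappa)$ to a Euclidean flow of the type covered by Theorem \ref{gene theorem}. By Lemma \ref{lemma proj}, the projected family $\hat{\tilde{X}}(\tau)=\pi_{p_0}(\tilde{X}(\tau))\subset L_{p_0}\cong\mathbb{R}^{n+1}$ evolves by (\ref{unor X rn+1 evo}), which is precisely (\ref{unor gene flow}) with
\[
\psi(\langle\hat{\tilde{X}},\hat{\nu}\rangle,x,\nabla\langle\hat{\tilde{X}},\hat{\nu}\rangle)=(1+\kappa|\hat{\tilde{X}}|^2)^{\frac{n+2}{2}\alpha+\frac{1}{2}}(1+\kappa\langle\hat{\tilde{X}},\hat{\nu}\rangle^2)^{-\frac{n+2}{2}\alpha+\frac{1}{2}}.
\]
Writing $\hat{\tilde{u}}=\langle\hat{\tilde{X}},\hat{\nu}\rangle$ and using the support function identities $\hat{\tilde{X}}=\nabla\hat{\tilde{u}}+\hat{\tilde{u}}x$, $\hat{\nu}=x$ and $|\hat{\tilde{X}}|^2=\hat{\tilde{u}}^2+|\nabla\hat{\tilde{u}}|^2$, this $\psi$ is the map $(\hat{\tilde{u}},\nabla\hat{\tilde{u}})\mapsto(1+\kappa(\hat{\tilde{u}}^2+|\nabla\hat{\tilde{u}}|^2))^{\frac{n+2}{2}\alpha+\frac{1}{2}}(1+\kappa\hat{\tilde{u}}^2)^{-\frac{n+2}{2}\alpha+\frac{1}{2}}$, which is smooth and $x$-independent on the open set $\{1+\kappa(\hat{\tilde{u}}^2+|\nabla\hat{\tilde{u}}|^2)>0\}$ and therefore has exactly the structure allowed in Theorem \ref{gene theorem}. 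So the only thing left to check is that $\psi$ obeys the bounds (\ref{gene psi cond}) for a fixed constant $A=A(n,\alpha,\tilde{X}_0)$ throughout the maximal interval of existence.

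To obtain these bounds I would use that the flow is contracting. As long as the flow exists, $\hat{\tilde{X}}(\tau)$ is a closed convex hypersurface with $\hat{\tilde{K}}>0$, so its outward normal speed $-\psi\hat{\tilde{K}}^\alpha$ is strictly negative and the enclosed convex bodies $\hat{\Omega}_\tau$ are nested decreasing; in particular $\hat{\Omega}_\tau\subseteq\hat{\Omega}_0\subseteq B^{n+1}(p_0,R_0)$ with $R_0:=\max_{\mathbb{S}^n}\hat{\tilde{u}}(\cdot,0)$, whence $\hat{\tilde{u}}(x,\tau)^2+|\nabla\hat{\tilde{u}}(x,\tau)|^2=|\hat{\tilde{X}}(x,\tau)|^2\le R_0^2$. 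When $\kappa=-1$ the inclusion $\pi_{p_0}(\mathbb{H}^{n+1})\subset B^{n+1}(p_0,1)$ forces in addition $R_0<1$, so in all cases the arguments $(\hat{\tilde{u}},\nabla\hat{\tilde{u}})(x,\tau)$ stay in the fixed compact set $\{s^2+|p|^2\le R_0^2\}$, which lies strictly inside the domain of smoothness of $\psi$ (automatic for $\kappa=1$). Consequently $\psi$ and its first and second derivatives in its arguments are uniformly bounded on this set, and $\psi$ is bounded below by a positive constant; this is (\ref{gene psi cond}) with $A$ depending only on $n$, $\alpha$ and, through $R_0$, on $\tilde{X}_0$.

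With (\ref{gene psi cond}) verified, Theorem \ref{gene theorem} applies and yields that $\hat{\tilde{X}}(\tau)$ shrinks to a point at a finite time $T^*$ depending only on $n$, $\alpha$, $\hat{\tilde{X}}_0$ and $A$, hence only on $n$, $\alpha$ and $\tilde{X}_0$. Since $\tau$ is the common time parameter of (\ref{unnorm flow on spaceform}) and (\ref{unor X rn+1 evo}) and $\pi_{p_0}$ is a diffeomorphism from its domain ($\mathcal{H}(p_0)$ when $\kappa=1$, all of $\mathbb{H}^{n+1}$ when $\kappa=-1$) onto its image, the original flow $\tilde{X}(\tau)=\pi_{p_0}^{-1}(\hat{\tilde{X}}(\tau))$ converges to a point of $\mathbb{N}^{n+1}(\kappa)$ at the same time $T^*$, which proves the corollary.

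The one step that needs care is the uniform-in-time control of $\psi$, and within it the two-sided bound: for $\kappa=-1$ the lower bound for $\psi$ needs $1-\hat{\tilde{u}}^2$ bounded away from $0$, and for $\kappa=1$ (when $\frac{1}{2}-\frac{n+2}{2}\alpha<0$) the lower bound needs $1+\langle\hat{\tilde{X}},\hat{\nu}\rangle^2$ bounded above; both reduce to the geometric fact that the projected hypersurfaces stay inside a fixed Euclidean ball --- of radius $<1$ in the hyperbolic case --- for all time. That fact is the contracting (avoidance) property, and the only subtlety worth stating is that it is used only on intervals where $\hat{\tilde{X}}(\tau)$ is a genuine convex hypersurface with positive Gauss curvature, so there is no circular dependence on the conclusion of Theorem \ref{gene theorem}.
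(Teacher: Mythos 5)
Your proposal is correct and follows essentially the same route as the paper: project via $\pi_{p_0}$, invoke Lemma \ref{lemma proj} to get the evolution (\ref{unor X rn+1 evo}), verify (\ref{gene psi cond}) for the specific $\psi$, apply Theorem \ref{gene theorem}, and pull back through $\pi_{p_0}^{-1}$. The only difference is that the paper dispatches the verification of (\ref{gene psi cond}) with a one-line remark that the flow is contracting, whereas you spell out the details (nesting of the projected bodies, the bound $|\hat{\tilde{X}}|\le R_0$ with $R_0<1$ in the hyperbolic case), which is a welcome elaboration rather than a deviation.
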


\begin{proof}
Let $p_0$ be the outer center of $\tilde{X_0}$, we consider the projection $\pi_{p_0}$ of $\tilde{X}(\tau)$ into $L_{p_0}$ on $[0,T^*)$. By Lemma \ref{lemma proj}, the image $\hat{\tilde{X}}(\tau)=\pi_{p_0}(\tilde{X}(\tau))$ will evolve by (\ref{unor X rn+1 evo}), which is a special case of (\ref{unor gene flow}) with $\psi=(1+\kappa|\hat{\tilde{X}}|^2)^{\frac{n+2}{2}\alpha+\frac{1}{2}}(1+\kappa\langle\hat{\tilde{X}},\hat{\nu}\rangle^2)^{-\frac{n+2}{2}\alpha+\frac{1}{2}}$. We can check that $\hat{\tilde{X}}_0$ is strictly convex and $\psi$ satisfies (\ref{gene psi cond}). By Theorem \ref{gene theorem}, $\hat{\tilde{X}}(\tau)$ converges to a point $\hat{q}$ in finite time $T^*>0$. Thus, $\pi_{p_0}^{-1}(\hat{\tilde{X}}(\tau))$ converges to the point $\pi_{p_0}^{-1}(q)$. 
\end{proof}
Next, we prove Theorem \ref{gene theorem}. 
Note that under the flow (\ref{unor gene flow}), the support function $\hat{\tilde{u}}$ evolves by
\begin{equation}\label{unor gene u}
	\hat{\tilde{u}}_\tau=-\psi(\hat{\tilde{u}},x,\nabla \hat{\tilde{u}})\hat{\tilde{K}}^\alpha(x,\tau).
\end{equation} 
  
 \begin{lemma}\label{unor ki lower boud}
 	 Suppose $\psi$ satisfies (\ref{gene psi cond}), then under the flow (\ref{unor gene flow}), there exists a constant $\varepsilon_0=\varepsilon_0(n,\alpha,A, \hat{\tilde{X}}_0)>0$ such that the principal curvatures $\hat{\tilde{\kappa}}_i$ of $\hat{\tilde{X}}$ satisfies
 	\begin{equation}
 		\hat{\tilde{\kappa}}_i\geq \varepsilon_0,\qquad \tau\in[0,T],
 	\end{equation}
 	for any $T<T^*$ fixed, where $T^*$ is the maximal existence time of (\ref{unor gene flow}).
 \end{lemma}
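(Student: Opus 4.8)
The plan is to derive an evolution equation for the principal curvatures (equivalently, for the inverse of the second fundamental form of the support function, since $\hat{\tilde\kappa}_i$ are the eigenvalues of $(\nabla^2\hat{\tilde u}+\hat{\tilde u}I)^{-1}$) and apply the tensor maximum principle. Write $W=\nabla^2\hat{\tilde u}+\hat{\tilde u}I$, so $\det W=1/\hat{\tilde K}$ and the principal radii of curvature are the eigenvalues of $W$. The claim $\hat{\tilde\kappa}_i\geq\varepsilon_0$ is equivalent to an \emph{upper} bound on the eigenvalues of $W$, i.e. an upper bound for the largest principal radius. Differentiating \eqref{unor gene u} twice and adding $\hat{\tilde u}_\tau I$, one obtains a parabolic equation for $W_{ij}$ of the form
\begin{equation*}
\partial_\tau W_{ij}=\alpha\,\psi\,\hat{\tilde K}^\alpha\, W^{kl}\nabla_k\nabla_l W_{ij}+\text{(lower order terms)},
\end{equation*}
where $W^{kl}$ is the inverse of $W$ and the lower-order terms involve $\psi$, its first and second derivatives (controlled by \eqref{gene psi cond}), $\hat{\tilde u}$ and $\nabla\hat{\tilde u}$, and $W$ itself. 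The standard Gauss-curvature-flow commutator terms (Simons-type identities on $\mathbb{S}^n$) produce, schematically, a term like $+\alpha\hat{\tilde K}^\alpha\psi\,(\tr W^{-1})W_{ij}+\cdots$ which is the good sign for bounding $W$ from above provided $\alpha$-dependent coefficients work out; the novelty here is tracking the extra $\psi$-factor carefully.

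The key steps, in order, would be: (i) establish uniform two-sided bounds $0<c_1\le\hat{\tilde u}\le c_2$ on the support function on $[0,T]$ — the upper bound is immediate since the flow is contracting and $\hat{\tilde u}$ is decreasing; the lower bound $\hat{\tilde u}\ge c_1>0$ on $[0,T]$, $T<T^*$, follows because the hypersurface has not yet shrunk to a point (this uses $T<T^*$ crucially and is where the constant $\varepsilon_0$ picks up its dependence on $T$... actually, rereading the statement, $\varepsilon_0$ is claimed independent of $T$, so instead one must argue that as long as $\hat{\tilde u}$ stays bounded below on $[0,T^*)$ — which it does on any $[0,T]$ — the bound persists, and the inradius estimate is what degenerates at $T^*$; I would phrase the lower bound in terms of the inradius $r_-(\tau)$ which is positive on $[0,T]$); (ii) derive a $C^1$ bound $|\nabla\hat{\tilde u}|\le c_3$, again from convexity and the support-function bounds plus the containment of $\hat{\tilde M}_\tau$ in a fixed ball; (iii) compute the full evolution equation for $W_{ij}$, isolate the zeroth-order terms, and show that at a point where the largest eigenvalue $\lambda_{\max}$ of $W$ attains a new maximum in space-time, the reaction terms are bounded by $C(1+\lambda_{\max})$ minus a nonnegative contribution, so $\lambda_{\max}$ cannot blow up in finite time and in fact stays bounded by a constant depending only on its initial value and $n,\alpha,A$ (and the $C^0,C^1$ bounds from steps (i)–(ii)); (iv) invoke Hamilton's tensor maximum principle to conclude $W\le C\,I$, hence $\hat{\tilde\kappa}_i=1/\lambda_i\ge 1/C=:\varepsilon_0$.

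The main obstacle I anticipate is step (iii): correctly computing the gradient and Hessian of the term $\psi(\hat{\tilde u},x,\nabla\hat{\tilde u})\hat{\tilde K}^\alpha=\psi\cdot(\det W)^{-\alpha}$ and organizing the resulting lower-order terms so that the potentially bad terms — those proportional to $\lambda_{\max}^2$ coming from the $\psi$-derivatives coupled with $\nabla^2\hat{\tilde u}=W-\hat{\tilde u}I$ — are seen to cancel or be absorbed. Because $\psi$ depends on $\nabla\hat{\tilde u}$, its second covariant derivative contributes a term quadratic in $W$, and one must check this enters with a coefficient that is controlled (e.g. by completing the square against the good term $\alpha\hat{\tilde K}^\alpha\psi(\tr W^{-1})W_{ij}$, or by using $\|\psi\|_{C^2}\le A$ together with the $C^1$ bound on $\hat{\tilde u}$). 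A secondary technical point is handling the dependence of $\psi$ on the base point $x\in\mathbb{S}^n$, which produces curvature terms from commuting covariant derivatives on the sphere; these are lower order and harmless but must be accounted for. Once the evolution inequality $\partial_\tau\lambda_{\max}\le \mathcal{L}\lambda_{\max}+C(1+\lambda_{\max})$ (with $\mathcal{L}$ the linearized operator) is in hand, the conclusion is routine.
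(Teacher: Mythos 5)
Your overall strategy (bound the largest eigenvalue of $W=\nabla^2\hat{\tilde u}+\hat{\tilde u}I$ from above via the maximum principle) is the same as the paper's, but there is a genuine gap at exactly the point you flag as "the main obstacle," and neither of your proposed resolutions closes it. The second derivative $\psi_{u_1u_1}$ of $\psi$ in the gradient slot produces the term $\psi_{u_1u_1}W_{11}^2$ in the evolution of $W_{11}$; after passing to $\log W_{11}$ this is still $\psi_{u_1u_1}W_{11}$, a term linear in $W_{11}$ whose coefficient is only bounded by $A$ and has no favorable sign. The Simons-type good term $\alpha\psi W^{ii}(W_{11}-W_{ii})/W_{11}\le\alpha\psi\,\mathrm{tr}(W^{-1})$ does \emph{not} scale with $W_{11}$, so "completing the square" against it cannot absorb a term of size $A\,W_{11}$; and merely invoking $\|\psi\|_{C^2}\le A$ leaves you with $\partial_\tau\log\lambda_{\max}\le\mathcal{L}\log\lambda_{\max}+C\lambda_{\max}$, which is not the inequality $C(1+\lambda_{\max})$ you claim and permits finite-time blow-up. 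The paper's fix, which your proposal lacks, is to use the test function $\log W_{11}+\tfrac{L}{2}r^2$ with $r^2=\hat{\tilde u}^2+|\nabla\hat{\tilde u}|^2$: the identity $(r^2)_{pp}=2W_{ppi}u_i+2W_{pp}^2-2uW_{pp}$ feeds the operator $F^{pp}=\alpha\psi K^\alpha W^{pp}$ a coercive term $\alpha L\psi W_{pp}\ge \alpha L A^{-1}W_{11}$, and choosing $L$ large relative to $A$ absorbs all bad linear-in-$W_{11}$ terms (the first-derivative terms at the maximum are handled via the critical equation $W_{11i}=-LW_{11}W_{ii}u_i$). This yields a bound on $W_{11}$ depending only on $n,\alpha,A,\hat{\tilde X}_0$, uniform in $T$.

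Two smaller points. First, your step (i) is a red herring: no positive lower bound on $\hat{\tilde u}$ (or on the inradius) is needed or used; the proof only requires the upper bound $\hat{\tilde u}^2+|\nabla\hat{\tilde u}|^2\le C(\hat{\tilde X}_0)$, which holds because the flow is contracting, together with $\psi\ge 1/A$. Your hedging about $T$-dependence would, if followed, produce a constant degenerating as $T\to T^*$, contrary to the statement. Second, even granting your claimed inequality $\partial_\tau\lambda_{\max}\le\mathcal{L}\lambda_{\max}+C(1+\lambda_{\max})$, Gronwall gives only $\lambda_{\max}\le Ce^{CT}$, which is $T$-dependent unless you first establish a uniform upper bound on $T^*$; the paper avoids this entirely by obtaining a pointwise bound at the spatial-temporal maximum.
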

\begin{proof}
In the following of the proof, we omit $\hat{}$ and $\tilde{}$ and use $t$ instead of $\tau$  for simplicity.

	Let $(W_{ij})=(u_{ij}+u\delta_{ij})$ be the inverse of the Weingarten matrix of $X$, whose eigenvalues $(\lambda_1,\dots,\lambda_n)$ are the principal radii of curvature of $X$. To prove the lower bound of $\kappa_i$, it suffices to prove the upper bound of $\lambda(x,t):=\max_{i=1,\dots,n}\lambda_i(x,t)$. We consider the function $\bar{G}(x,t):=\log \lambda+\frac{L}{2}r^2$, where $L>0$ is a large constant to be determined. Suppose the maximum of $\bar{G}$ on $\mathbb{S}^n\times [0,T]$ is attained at $(x_0,t_0)$,  we choose a local orthonormal frame $e_1,\dots,e_n$ around $x_0$ such that $\{W_{ij}(x,t)\}$ is diagonal at $(x_0,t_0)$ and $W_{11}(x_0,t_0)=\lambda_1(x_0,t_0)=\lambda(x_0,t_0)$. Then the function 
	\begin{equation}
		G(x,t):=\log W_{11}(x,t)+\frac{L}{2}r^2
	\end{equation} 
	also attains its maximum at $(x_0,t_0)$. Let $(W^{ij})=(W_{ij})^{-1}$ be the inverse of $(W_{ij})$, $F^{pq}:=\alpha \psi K^{\alpha}W^{pq}$, then at $(x_0,t_0)$
		\begin{align*}
			W_{11,t}=&(u_{11}+u)_t\\
			=&-K^\alpha[\psi+\psi_uu_{11}+\psi_{u_i}u_{i11}+\psi_{x_1x_1}+\psi_{uu}u_1^2+\psi_{u_1u_1}u_{11}^2+2\psi_{x_1u}u_1\\
			&+2\psi_{x_1u_1}u_{11}+2\psi_{uu_1}u_{11}u_1-2\alpha W^{ii}W_{ii1}(\psi_{x_1}+\psi_uu_1+\psi_{u_1}u_{11})\\
			&+\alpha^2\psi(W^{ii}W_{ii1})^2+\alpha \psi W^{ii}W^{jj}W_{ij1}^2-\alpha \psi W^{ii}W_{ii,11}].\\
       \end{align*}
Using the formula for commutating covariant derivatives on $\mathbb{S}^n$, we have at $(x_0,t_0)$, 
		\begin{align*}
			&W_{i11}=W_{11i},\\
			&W_{ii,11}=W_{11,ii}+W_{ii}-W_{11},
		\end{align*}
	Thus
	\begin{equation}\label{unor W11 evo}
	    \begin{aligned}
			&\quad W_{11,t}-F^{pp}W_{11,pp}\\
			=&-K^\alpha[\psi+\psi_uW_{11}-\psi_uu+\psi_{u_i}W_{11i}-\psi_{u_1}u_1+\psi_{x_1x_1}+\psi_{uu}u_1^2+\psi_{u_1u_1}W_{11}^2\\
			&-2\psi_{u_1u_1}W_{11}u+\psi_{u_1u_1}u^2+2\psi_{x_1u}u_1
			+2\psi_{x_1u_1}W_{11}-2\psi_{x_1u_1}u\\
			&+2\psi_{uu_1}W_{11}u_1
			-2\psi_{uu_1}uu_1-2\alpha W^{ii}W_{ii1}(\psi_{x_1}+\psi_uu_1+\psi_{u_1}W_{11}\\
			&-\psi_{u_1}u)+\alpha^2\psi(W^{ii}W_{ii1})^2+\alpha \psi W^{ii}W^{jj}W_{ij1}^2+\alpha \psi W^{ii}(W_{11}-W_{ii})].\\
	 \end{aligned}
	 \end{equation}
On the other hand, we have at $(x_0,t_0)$ 
	\begin{align*}
		&r^2_t=2uu_t+2u_iu_{it}=-2K^\alpha[u\psi+\psi_{x_i}u_i+\psi_u|\nabla u|^2+\psi_{u_i}u_{ii}u_i-\alpha \psi W^{pp}W_{ppi}u_i],\\
		&(r^2)_{pp}=2(W_{pi}u_{i})_p=2W_{ppi}u_i+2W_{pp}^2-2uW_{pp},
	\end{align*}
	which implies
	\begin{equation}
	\begin{aligned}
		&r^2_t-F^{pp}(r^2)_{pp}\\
		=&-2K^\alpha[u\psi+\psi_{x_i}u_i+\psi_u|\nabla u|^2+\psi_{u_i}u_{ii}u_i+\alpha \psi W^{pp}(W_{pp}^2-uW_{pp})]\\
		=&-2K^\alpha[(-n\alpha+1)u\psi+\psi_{x_i}u_i+\psi_u|\nabla u|^2+\psi_{u_i}u_{ii}u_i+\alpha \psi W_{pp}].\\
	\end{aligned}
	\end{equation}
	By maximum principle, at $(x_0,t_0)$, we have
	\begin{equation}\label{w11 criti}
	W_{11i}=-\frac{L}{2}W_{11}(r^2)_i=-LW_{11}W_{ii}u_i,
	\end{equation}
	and
	\begin{align*}
		 0\leq& G_t-F^{pp}G_{pp}\\
		 =&\frac{W_{11t}-F^{pp}W_{11,pp}}{W_{11}}+F^{pp}\frac{W_{11p}^2}{W_{11}^2}+\frac{L}{2}(r^2_t-F^{pp}(r^2)_{pp})\\
		 =&-K^\alpha[\psi_{u_1u_1}W_{11}+\psi_{u_i}\frac{W_{11i}}{W_{11}}+\psi_{u}-2\psi_{u_1u_1}u+2\psi_{x_1u_1}+2\psi_{uu_1}u_1\\
		 &+\frac{1}{W_{11}}(\psi-\psi_uu-\psi_{u_1}u_1+\psi_{x_1x_1}+\psi_{uu}u_1^2+\psi_{u_1u_1}u^2+2\psi_{x_1u}u_1-2\psi_{x_1u_1}u-2\psi_{uu_1}uu_1)\\
		 &-2\alpha W^{ii}W_{ii1}(\psi_{u_1}+\frac{\psi_{x_1}+\psi_uu_1-\psi_{u_1}u}{W_{11}})+\alpha^2\frac{\psi(W^{ii}W_{ii1})^2}{W_{11}}+\alpha\frac{\psi W^{ii}W^{jj}W_{ij1}^2}{W_{11}}\\
		 &+\alpha\psi W^{ii}(1-\frac{W_{ii}}{W_{11}})-\alpha\psi W^{pp}\frac{W_{11p}^2}{W_{11}^2}+(1-n\alpha)Lu\psi+L\psi_{x_i}u_i+L\psi_u|\nabla u|^2\\
		 &+L\psi_{u_i}W_{ii}u_i-L\psi_{u_i}uu_i+\alpha L\psi W_{pp}]\\
		 \leq &-K^{\alpha}[-C(n,\alpha,A,L,X_0)+\psi_{u_1u_1}W_{11}-L\psi_{u_i}W_{ii}u_i-2\alpha W^{ii}W_{ii1}(\psi_{u_1}+\frac{\psi_{x_1}+\psi_uu_1-\psi_{u_1}u}{W_{11}})\\
		 &+\alpha^2\frac{\psi(W^{ii}W_{ii1})^2}{W_{11}}+\alpha\frac{\psi W^{ii}W^{jj}W_{ij1}^2}{W_{11}}+\alpha\psi W^{ii}(1-\frac{W_{ii}}{W_{11}})-\alpha\psi W^{pp}\frac{W_{11p}^2}{W_{11}^2}\\
		 &+L\psi_{u_i}W_{ii}u_i+\alpha L\psi W_{pp}]\\
		 \end{align*}
 where we use (\ref{w11 criti}), (\ref{gene psi cond}) and the fact that $u^2\leq u^2+|\nabla u|^2\leq C(X_0)$ in the last step since the flow is contracting.  By Cauchy Schwartz inequality,		 
\begin{equation}
\begin{aligned}
   &G_t-F^{pp}G_{pp}\\
    \leq &-K^{\alpha}[-C(n,\alpha,A,L,X_0)+(\psi_{u_1u_1}-C(n,\alpha,A,X_0))W_{11}\\
		 &+\alpha\frac{\psi W^{ii}W^{jj}W_{ij1}^2}{W_{11}}-\alpha\psi W^{pp}\frac{W_{11p}^2}{W_{11}^2}+\alpha L\psi W_{pp}]\\
	\leq &-K^{\alpha}[-C(n,\alpha,A,L,X_0)+(\psi_{u_1u_1}-C(n,\alpha,A,X_0))W_{11}\\
		 &+\alpha\frac{\psi W^{ii}W^{11}W_{11i}^2}{W_{11}}-\alpha\psi W^{pp}\frac{W_{11p}^2}{W_{11}^2}+\alpha L\psi W_{pp}]\\	
		 \leq &	-K^{\alpha}[-C(n,\alpha,A,L,X_0)-C(n,\alpha,A,X_0)W_{11}+\alpha L\psi W_{11}].	
\end{aligned} 
\end{equation}
Since $\psi\geq \frac{1}{A}>0$ is bounded from below by (\ref{gene psi cond}), we can take $L\geq C_1(n,\alpha,A,X_0)$ sufficiently large such that $W_{11}\leq C_2(n,\alpha,A,X_0)$. This implies that
	 \begin{equation}
		\lambda(x,t)\leq C_2(n,\alpha,A,X_0).
		\end{equation}  
\end{proof}

\begin{lemma}\label{unno K above}
For any smooth, strictly convex solution $\{\hat{\tilde{X}}(\tau)\}_{[0,T]}$ of equation $(\ref{unor gene flow})$ with $R_-\le r_-(\hat{\tilde{X}}(\tau))\le r_+(\hat{\tilde{X}}(\tau))\le R_+$ for $\tau\in [0,T]$, 
\begin{equation}
\max_{\tau\le T,x\in \mathbb{S}^n} \psi \hat{\tilde{K}}^{\alpha}\le \max\{ C(\alpha,n,\hat{\tilde{X}}_0,A)\frac{R_+}{R_-^{1+n\alpha}}, \max_{x\in \mathbb{S}^n}\psi \hat{\tilde{K}}^{\alpha}(x,0)R_+\}.
\end{equation}

\end{lemma}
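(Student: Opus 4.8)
The plan is to adapt Tso's auxiliary-function argument to the present setting, where the normal speed carries the extra factor $\psi$; throughout this sketch I drop the hats and tildes, writing $u$, $K$ for $\hat{\tilde u}$, $\hat{\tilde K}$. First I would record two $C^{0}$ facts. Since the speed $\psi K^{\alpha}$ is positive (as $\psi\ge 1/A$), (\ref{unor gene u}) shows that with respect to any fixed base point the support function of the enclosed body is strictly decreasing in $\tau$, so the bodies $\hat\Omega(\tau)$ are nested, $\hat\Omega(\tau_{2})\subseteq\hat\Omega(\tau_{1})$ for $\tau_{1}\le\tau_{2}$. Choosing the base point $q$ to be the incenter of the final body $\hat\Omega(T)$, we get $B(q,R_{-})\subseteq\hat\Omega(T)\subseteq\hat\Omega(\tau)$ for all $\tau\in[0,T]$; hence the support function $u(\cdot,\tau)$ of $\hat\Omega(\tau)$ measured from $q$ satisfies $R_{-}\le u(x,\tau)\le 2R_{+}$ on $\mathbb{S}^{n}\times[0,T]$ (the upper bound since $\operatorname{diam}\hat\Omega(\tau)\le 2r_{+}(\hat\Omega(\tau))\le 2R_{+}$), and it is also $\le\operatorname{diam}\hat\Omega(0)=:C_{0}(\hat{\tilde{X}}_0)$ by nestedness, and $u_{\tau}=-\psi K^{\alpha}$.

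Next I would set $c:=\tfrac12 R_{-}$ and study $\Theta:=\dfrac{\psi K^{\alpha}}{u-c}$, so that $\tfrac12 R_{-}\le u-c\le 2R_{+}$. Writing $\mathcal F:=\psi K^{\alpha}$, $W_{ij}:=\nabla_{ij}u+u\delta_{ij}$, $H:=\operatorname{tr}W^{-1}=\sum_{i}\kappa_{i}$, and $F^{pq}:=\alpha\psi K^{\alpha}W^{pq}$ (the linearized operator already used in the proof of Lemma~\ref{unor ki lower boud}), one differentiates (\ref{unor gene u}) to obtain an evolution equation of the schematic form
\[
\mathcal F_{\tau}-F^{pq}\nabla_{pq}\mathcal F=\alpha\,\mathcal F^{2}H+(\text{terms linear in }\nabla\mathcal F\text{ and in }\mathcal F),
\]
where the lower-order terms carry only $\nabla\psi,\nabla^{2}\psi$ (bounded by $A$ via (\ref{gene psi cond})) and $u,\nabla u$. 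Combining this with $u_{\tau}=-\psi K^{\alpha}$, the key algebraic point is the cancellation $\dfrac{\alpha\mathcal F^{2}H}{u-c}-\dfrac{\alpha u\,\mathcal F^{2}H}{(u-c)^{2}}=-\dfrac{c\,\alpha\,\mathcal F^{2}H}{(u-c)^{2}}$, which is genuinely negative — this is exactly why one takes $c>0$. Carrying out the computation as in the proof of Lemma~\ref{unor ki lower boud}, in particular absorbing the third-order terms $\nabla W$ produced by the $\nabla u$-dependence of $\psi$ into the good gradient term $\alpha\psi K^{\alpha}W^{ii}W^{jj}W_{ij1}^{2}$ by Cauchy--Schwarz, one arrives, at any point where $\nabla\Theta=0$ (so that the first-order terms cancel), at an inequality
\[
\Theta_{\tau}-F^{pq}\nabla_{pq}\Theta\ \le\ C(n,\alpha,A,\hat{\tilde{X}}_0)\,\Theta^{2}\ -\ c_{1}(n,\alpha,A)\,R_{-}^{1+\frac{1}{n\alpha}}\,\Theta^{\,2+\frac{1}{n\alpha}},
\]
where the superlinear negative term comes from the cancellation above together with $H\ge nK^{1/n}=n(\mathcal F/\psi)^{1/(n\alpha)}$, $\psi\le A$ and $\mathcal F=\Theta(u-c)\ge\tfrac12 R_{-}\Theta$, and where $u$ has been bounded by $C_{0}(\hat{\tilde{X}}_0)$ in the coefficient of $\Theta^{2}$.

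Finally I would apply the maximum principle to $\Theta$ on $\mathbb{S}^{n}\times[0,T]$. If the maximum is attained at some $\tau_{0}>0$, the displayed inequality forces $\Theta\le C(n,\alpha,A,\hat{\tilde{X}}_0)/R_{-}^{1+n\alpha}$ there, and then $\psi K^{\alpha}=\Theta(u-c)\le 2R_{+}\cdot C(n,\alpha,A,\hat{\tilde{X}}_0)/R_{-}^{1+n\alpha}$ on all of $\mathbb{S}^{n}\times[0,T]$, which is the first term in the asserted bound. If instead the maximum of $\Theta$ is attained at $\tau=0$, then, using $\Theta(\cdot,\tau)\le\Theta(\cdot,0)$, $u(\cdot,0)-c\ge\tfrac12 R_{-}$ and $\psi K^{\alpha}=\Theta(u-c)$ with $u-c\le 2R_{+}$, one bounds $\psi K^{\alpha}$ on $\mathbb{S}^{n}\times[0,T]$ by a multiple (depending on $n,\alpha,A$) of $R_{+}\max_{\mathbb{S}^{n}}\psi K^{\alpha}(\cdot,0)$, which is the second term. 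Taking the larger of the two alternatives gives the claim. The main obstacle is the evolution-equation computation for $\Theta$: organizing the many $\nabla\psi,\nabla^{2}\psi$ contributions, absorbing the genuinely third-order $\nabla W$ terms into the good gradient term precisely as in the proof of Lemma~\ref{unor ki lower boud}, and tracking the powers of $R_{-}$ and $R_{+}$ carefully enough to land the stated inequality.
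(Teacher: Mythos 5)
Your proposal is correct and follows essentially the same route as the paper: the paper also applies Tso's trick to the quantity $q=\psi\hat{\tilde K}^{\alpha}/(u_{z_0}-\tfrac{R_-}{2})$ with $z_0$ a point of $\mathrm{Int}(\hat{\tilde\Omega}(T))$ (so that nestedness of the shrinking bodies gives $u_{z_0}>\tfrac{R_-}{2}$ for all $\tau\le T$), derives its evolution equation, uses $H\ge n\sigma_n^{-1/n}$ together with $\psi\le A$ to produce the good term $-cR_-^{1+1/(n\alpha)}q^{2+1/(n\alpha)}$, and concludes by the maximum principle with the same dichotomy between an interior-time maximum and the initial-time maximum. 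The only differences are cosmetic (incenter of $\hat\Omega(T)$ versus an arbitrary interior point, and your more explicit bookkeeping of the case $\tau_0=0$).
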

\begin{proof}
In the following, we omit $\hat{}$ and $\tilde{}$ and use $t$ instead of $\tau$  for simplicity.
Let $\sigma_n$ be the $n$-th elementary symmetric function with $\sigma_{n}(W)=K^{-1}$. From the evolution equation (\ref{unor gene u}), we obtain
\begin{equation}
\frac{\partial}{\partial t}W_{kl}=-((\psi\sigma_n^{-\alpha})_{kl}+\delta_{kl}\psi \sigma_n^{-\alpha}).
\end{equation}
There exist a point $z_0\in \Int(\Omega(T))$ such that $u_{z_0}(x,T):=u(x,T)-\langle z_0, x\rangle >\frac{R_-}{2}$. Setting $q=\frac{\psi \sigma_n^{-\alpha}}{u_{z_0}-\frac{R_-}{2}}$, this implies 
\begin{align*}
\frac{\partial}{\partial t}(\psi\sigma_n^{-\alpha})&=\alpha \psi(u_{z_0}-\frac{R_-}{2})\sigma_n^{-(\alpha+1)}\sigma_n^{kl} q_{kl}+2\alpha \psi \sigma_n^{-(\alpha+1)}\sigma_n^{kl}q_k(u_{z_0})_l\\
&+\alpha \psi \sigma_n^{-(\alpha+1)}\sigma_n^{kl}((u_{z_0})_{kl}+\delta^l_k u_{z_0})q- \frac{\alpha R_-}{2} \psi \sigma_n^{-(\alpha+1)}Hq+\sigma_n^{-\alpha}\psi_t,
\end{align*}
where $H$ is the mean curvature. Since $M_t$ is shrinking we have $\Omega(T)\subset \Omega(t)$ for any $t<T$, then we obtain $u_{z_0}(x,t)>\frac{R_-}{2}$ for all $(x,t)\in \mathbb{S}^n\times [0,T]$. Since $W_{kl}= u_{kl}+\delta^l_k u=( u_{z_0})_{kl}+\delta^l_k u_{z_0}$, we obtain 
\begin{align*}
\frac{\partial}{\partial t}q&=-\frac{u_{tt}}{u_{z_0}-\frac{R_-}{2}}+\frac{u_t(u_{z_0})_{t}}{(u_{z_0}-\frac{R_-}{2})^2}\\
&=-\frac{(-\psi\sigma_n^{-\alpha})_{t}}{u_{z_0}-\frac{R_-}{2n}}+q^2\\
&=\alpha \psi\sigma_n^{-(\alpha+1)}\sigma_n^{kl}q_{kl}+\frac{2\alpha \psi \sigma_n^{-(\alpha+1)}\sigma_n^{kl}q_k(u_{z_0})_l}{u_{z_0}-\frac{R_-}{2}}\\
&+q^2(1+\alpha n-\frac{R_-}{2}\alpha H)+\frac{\sigma_n^{-\alpha}\psi_t}{u_{z_0}-\frac{R_-}{2}},
\end{align*}
where $H\ge n\sigma_n^{-\frac{1}{n}}$.

\begin{align*}
\frac{\partial}{\partial t}\psi=\psi_uq(\frac{R_-}{2}-u_{z_0})+\psi_{u_k}q_k(\frac{R_-}{2}-u_{z_0})-\psi_{u_k}q(u_{z_0})_k.
\end{align*}
At the maximum point $(x_0,t_0)$ of $q$, we obtain
\begin{align*}
 &q^2(1+\alpha n-\frac{\alpha R_- H}{2}+\frac{\psi_u}{\psi}(\frac{R_-}{2}-u_{z_0})-\frac{\psi_{u_k}(u_{z_0})_k}{\psi}\big)\\
\le& q^2(1+n\alpha-C(n,\alpha)R_-(\frac{u_{z_0}-\frac{R_-}{2}}{\psi})^{\frac{1}{n\alpha}}q^{\frac{1}{n\alpha}}+C(X_0,A))\\
\le& q^2(C(\alpha, n, X_0, A)-C(\alpha,n,A)R_-^{1+\frac{1}{n\alpha}}q^{\frac{1}{n\alpha}}).
\end{align*}
 Hence,
\[ \max_{t\le T,x\in \mathbb{S}^n}q(x,t)=q(x_0,t_0)\le \max\{ \frac{C(\alpha,n,X_0, A)}{R_-^{1+n\alpha}},\max_{x\in \mathbb{S}^n}q(x,0)\}.\]
That is,
\begin{align*}
\max_{t\le T,x\in \mathbb{S}^n} \psi \sigma_n^{-\alpha}&=\max_{t\le T,x\in \mathbb{S}^n} \Big(q(x,t)(u(x,t)-\frac{R_-}{2})\Big)\\
&\le \max\{ C(\alpha,n,X_0, A)\frac{R_+}{R_-^{1+n\alpha}},  \max_{x\in \mathbb{S}^n}\psi\sigma_n^{-\alpha}(x,0)R_+\}.
\end{align*}
\end{proof}

\begin{proof}[Proof of Theorem \ref{gene theorem}]
Since $\hat{\tilde{X}}(\tau)$ is a contracting flow, $|\hat{\tilde{u}}|_{C^{0}}\leq C(\hat{\tilde{X}}_0)$. Since $\hat{\tilde{X}}$ is strictly convex, the $C^0$ estimate implies the $C^1$ estimate of $\hat{\tilde{u}}$. By the definition of $T^*$, $r_+\geq\varepsilon>0$ on $[0,T]$ for any $T<T^*$. On the other hand, due to Lemma 2.2 in \cite{CW} and Lemma \ref{unor ki lower boud},  we have 
\begin{equation}\label{r+- rela}
r^2_+\le C(n,\alpha, A,\hat{\tilde{X}}_0)r_-,
\end{equation} 
as long as the flow exists. By (\ref{r+- rela}), Lemma \ref{unor ki lower boud} and Lemma \ref{unno K above} 
\begin{equation}
\|\hat{\tilde{u}}(x,\tau)\|_{C^2(\mathbb{S}^n\times [0,T])}\leq C(n,\alpha, A,\hat{\tilde{X}}_0, \varepsilon).	
\end{equation}
Since equation (\ref{unor gene u}) is a concave parabolic equation, by Krylov-Safanov's theorem and the standard theory on parabolic equations, this implies that $\hat{\tilde{X}}(\tau)$ is smooth on $[0,T]$. Thus, $\lim_{\tau\to T^*}r_-=\lim_{\tau\to T^*}r_+=0$. That is, $\hat{\tilde{X}}(\tau)$ converges to a point $\hat{q}$ as  $\tau\to T^*$. Set the initial value of the flow (\ref{unor gene flow}) to be the boundary of the outer ball of $\hat{\tilde{X}}_0$, then the solution will be a family of geodesic spheres with radii $\{\tilde{r}(\tau)\}$ satisfying the ODE
 \begin{equation}
 	\frac{\partial \tilde{r}}{\partial\tau}=-\frac{\psi}{\tilde{r}^{n\alpha}}\leq -\frac{1}{A\tilde{r}^{n\alpha}},
 	\end{equation}
 which converges to zero in finite time. By comparison principle, $\hat{\tilde{X}}(\tau)$ will converge to a point in finite time. 
 \end{proof}

\section{ the rescaled flow}
The un-normalized flow (\ref{unnorm flow on spaceform}) converges to a point $q_0\in \mathbb{N}^{n+1}(\kappa)$ as $\tau \rightarrow T^*$ by Corollary \ref{coro to point}. For $\kappa=1$ and sufficiently small $\delta_1$, $\tilde{X}(\tau)$ is contained in the open hemi-sphere $\mathcal{H}(q_0)$ when $\tau \in [T^*-\delta_1,T^*]$. We consider a new geodesic polar coordinate centered at $q_0$ and use the new projection $\pi_{q_0}$ of ${\tilde{X}(\tau)}$ for $\tau \in [T^*-\delta_1,T^*]$ onto the hyperplane of $\mathbb{R}^{n+2}$ which is tangent to $\mathbb{N}^{n+1}(\kappa)$ at $q_0$. We re-scale $\hat{\tilde{X}}(\tau)$ to keep the enclosed volume fixed.
Let $\hat{X}(x,t)=e^{t}\hat{\tilde{X}}(x,\tau)$, we have
\begin{equation}
\begin{aligned}
	\hat{X}_t(x,t)=&\hat{X}(x,t)+e^t\hat{\tilde{X}}_\tau(x,\tau)\tau'(t)\\
	=&\hat{X}(x,t)-e^{(n\alpha+1)t}\tau'(t)(1+\kappa|\hat{\tilde{X}}|^2)^{\frac{n+2}{2}\alpha+\frac{1}{2}}(1+\kappa|\langle \hat{\tilde{X}},\hat{\nu} \rangle|^2)^{-\frac{n+2}{2}\alpha+\frac{1}{2}}	\hat{K}^\alpha\\
	=&\hat{X}(x,t)-e^{(n\alpha+1)t}\tau'(t)(1+\kappa\frac{|\hat{X}|^2}{e^{2t}})^{\frac{n+2}{2}\alpha+\frac{1}{2}}(1+\kappa\frac{|\langle \hat{X},\hat{\nu} \rangle|^2}{e^{2t}})^{-\frac{n+2}{2}\alpha+\frac{1}{2}}	\hat{K}^\alpha,
	\end{aligned}
\end{equation}  
and the corresponding support function evolves by
\begin{equation}\label{evol-u}
	\hat{u}_t=\hat{u}-e^{(n\alpha+1)t}\tau'(t)(1+\kappa\frac{\hat{r}^2}{e^{2t}})^{\frac{n+2}{2}\alpha+\frac{1}{2}}(1+\kappa\frac{\hat{u}^2}{e^{2t}})^{-\frac{n+2}{2}\alpha+\frac{1}{2}}\hat{K}^\alpha.
\end{equation}
Since 
\begin{align*}
	\frac{d\Vol(\hat{\Omega})}{dt}=&\int_{\mathbb{S}^n}\hat{u}_t\sigma_nd\theta\\
	=&\int_{\mathbb{S}^n}\hat{u}\sigma_nd\theta-\tau'(t)e^{(n\alpha+1)t}\int_{\mathbb{S}^n}(1+\kappa\frac{\hat{r}^2}{e^{2t}})^{\frac{n+2}{2}\alpha+\frac{1}{2}}(1+\kappa\frac{\hat{u}^2}{e^{2t}})^{-\frac{n+2}{2}\alpha+\frac{1}{2}}\hat{K}^{\alpha-1}d\theta.
\end{align*}
We obtain 
\begin{equation}
\tau'(t)=\frac{\int_{\mathbb{S}^n}\hat{u}\sigma_nd\theta}{e^{(n\alpha+1)t}\int_{\mathbb{S}^n}(1+\kappa\frac{\hat{r}^2}{e^{2t}})^{\frac{n+2}{2}\alpha+\frac{1}{2}}(1+\kappa\frac{\hat{u}^2}{e^{2t}})^{-\frac{n+2}{2}\alpha+\frac{1}{2}}\hat{K}^{\alpha-1}d\theta}=\frac{(n+1)|\hat{\tilde{\Omega}}(\tau)|}{\frac{d|\hat{\tilde{\Omega}}(\tau)|}{d\tau}},
\end{equation}
that is 
\begin{equation}\label{tau,t}
t=\frac{1}{n+1}\log \Big(\frac{|B(1)|}{|\hat{\tilde{\Omega}}(\tau)|}\Big),
\end{equation} 
where $B(1)$ denotes the unit ball in $\mathbb{R}^{n+1}$.
Note that $|\hat{\tilde{\Omega}}(\tau)|$ approaches zero as $\tau$ approaches $T^*$, and consequently $t$ approaches infinity as $\tau$ approaches $T^*$ and the solution $\hat{X}(x,t)$ exists for all positive time.
We get
\begin{equation}\label{nor X equa eucl}
\begin{aligned}
	\hat{X}_t(x,t)=&\hat{X}(x,t)-\frac{(1+\kappa\frac{|\hat{X}|^2}{e^{2t}})^{\frac{n+2}{2}\alpha+\frac{1}{2}}(1+\kappa\frac{\langle\hat{X},\hat{\nu}\rangle^2}{e^{2t}})^{-\frac{n+2}{2}\alpha+\frac{1}{2}}\hat{K}^{\alpha}}{\dashint_{\mathbb{S}^n}(1+\kappa\frac{|\hat{X}|^2}{e^{2t}})^{\frac{n+2}{2}\alpha+\frac{1}{2}}(1+\kappa\frac{\langle\hat{X},\hat{\nu}\rangle^2}{e^{2t}})^{-\frac{n+2}{2}\alpha+\frac{1}{2}}\hat{K}^{\alpha-1}d\theta}\hat{\nu},
	\end{aligned}
\end{equation}
with initial value $\hat{X}_0=\Big(\frac{B(1)}{|\hat{\tilde{\Omega}}(T^*-\delta_1)|}\Big)^{\frac{1}{n+1}}\pi_{q_0}(\tilde{X}_{T^*-\delta_1})$. The support function satisfies    
\begin{equation}\label{norm u evo general}
\begin{aligned}
	\hat{u}_t&=\hat{u}-\frac{(1+\kappa\frac{\hat{r}^2}{e^{2t}})^{\frac{n+2}{2}\alpha+\frac{1}{2}}(1+\kappa\frac{\hat{u}^2}{e^{2t}})^{-\frac{n+2}{2}\alpha+\frac{1}{2}}\hat{K}^{\alpha}}{\dashint_{\mathbb{S}^n}(1+\kappa\frac{\hat{r}^2}{e^{2t}})^{\frac{n+2}{2}\alpha+\frac{1}{2}}(1+\kappa\frac{\hat{u}^2}{e^{2t}})^{-\frac{n+2}{2}\alpha+\frac{1}{2}}\hat{K}^{\alpha-1}d\theta}\\
	&=\hat{u}-\frac{\psi\hat{K}^\alpha}{\dashint_{\mathbb{S}^n}\psi\hat{K}^{\alpha-1}d\theta},\\
\end{aligned}
\end{equation}
with initial value $\hat{u}_0=\langle\hat{X}_0, \hat{\nu}\rangle$, $\psi=(1+\kappa\frac{\hat{r}^2}{e^{2t}})^{\frac{n+2}{2}\alpha+\frac{1}{2}}(1+\kappa\frac{\hat{u}^2}{e^{2t}})^{-\frac{n+2}{2}\alpha+\frac{1}{2}}$.

\subsection{Entropy}
 \cite{AGN} defined an 'entropy'  functional by
\begin{equation}
	\mathcal{E}_\alpha(\Omega):=\sup_{z_0\in\Omega}\mathcal{E}_\alpha(\Omega, z_0),
\end{equation}
where
\begin{equation}\label{entropy functional}
	\mathcal{E}_{\alpha}(\Omega,z_0)=\left\{\begin{split}
		&\frac{\alpha}{\alpha-1}(\log{\dashint_{\mathbb{S}^n}u_{z_0}^{1-\frac{1}{\alpha}}}d\theta),\quad& \alpha\neq 1;\\
		&\dashint_{\mathbb{S}^n}\log u_{z_0}d\theta, \quad & \alpha=1,
	\end{split}
	\right.
\end{equation}
where $u_{z_0}(x):=\sup_{z\in\Omega}\langle z-z_0,x\rangle$ is the support function in direction $x$ with respect to $z_0$. By Lemma 2.5 in \cite{AGN}, there exists a unique point $z_{e}\in\Int(\Omega)$ such that $\mathcal{E}_\alpha(\Omega)=\mathcal{E}_{\alpha}(\Omega,z_e)$ if $\Omega$ is a bounded convex domain with $\Int(\Omega)\neq\emptyset$. $z_e$ is called the 'entropy point' of $\Omega$.     

Denote $\hat{\Omega}_{t}=\hat{\Omega}(t)$ and $\hat{\tilde{\Omega}}_{\tau}=\hat{\tilde{\Omega}}(\tau)$. We remark that entropy of $\hat{X}$ is related to $\hat{\tilde{X}}$ as follows:
\begin{equation}
	\mathcal{E}_{\alpha}(\hat{\Omega}_t,e^tz_0)=\mathcal{E}_{\alpha}(\hat{\tilde{\Omega}}_{\tau},z_0)-\frac{1}{n+1}\log \Big(\frac{|\hat{\tilde{\Omega}}_{\tau}|}{|B(1)|}\Big).
\end{equation}
Note that $\psi=(1+\kappa\frac{\hat{r}^2}{e^{2t}})^{\frac{n+2}{2}\alpha+\frac{1}{2}}(1+\kappa\frac{\hat{u}^2}{e^{2t}})^{-\frac{n+2}{2}\alpha+\frac{1}{2}}$ is uniformly bounded from the contracting result in Section 3.

\subsection{Monotonicity inequality}
\begin{theorem}\label{monotone quantity}
	Under the normalized flow (\ref{norm u evo general}), $\mathcal{E}_{\alpha}(\hat{\Omega}_t)+C(n,\alpha,\tilde{X}_0)e^{-\frac{2(n+1)}{2n+1}t}$ is non-increasing when $t\ge t_0(n,\alpha,\tilde{X}_0,T^*-\delta_1)$ sufficiently large. Furthermore, $\mathcal{E}^{\infty}_{\alpha}:=\lim\limits_{t \to \infty}\big(\mathcal{E}(\hat{\Omega}_t)+C(n,\alpha,\tilde{X}_0)e^{-\frac{2(n+1)}{2n+1}t}\big)$ exists if $\alpha \ge \frac{1}{n+2}$. 
	\end{theorem}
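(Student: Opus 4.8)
The plan is to compute the time derivative of the quantity $\mathcal{E}_\alpha(\hat\Omega_t, z_e(t))$ along the normalized flow and show it is bounded above by an exponentially decaying term $C e^{-\frac{2(n+1)}{2n+1}t}$; adding a suitable constant multiple of $e^{-\frac{2(n+1)}{2n+1}t}$ then cancels the error and produces a genuinely non-increasing quantity. The starting point is the well-known computation in \cite{AGN}: for the \emph{Euclidean} normalized Gauss curvature flow (i.e. $\psi\equiv 1$), one has $\frac{d}{dt}\mathcal{E}_\alpha(\hat\Omega_t)\le 0$, with the derivative expressible (evaluating at the entropy point, using that the first variation in $z_0$ vanishes there) via a Jensen/H\"older-type inequality comparing $\dashint \hat u^{1-1/\alpha}\hat K^{?}$ against products of such averages. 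The difference from the Euclidean case is entirely the factor $\psi=(1+\kappa\hat r^2 e^{-2t})^{\frac{n+2}{2}\alpha+\frac12}(1+\kappa\hat u^2 e^{-2t})^{-\frac{n+2}{2}\alpha+\frac12}$ in both numerator and the averaging denominator of \eqref{norm u evo general}.

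The key estimate is that $\psi = 1 + O(e^{-2t})$ uniformly. Indeed, from the contraction result of Section 3 (Corollary \ref{coro to point}) and the rescaling $\hat X = e^t \hat{\tilde X}$, the rescaled body $\hat\Omega_t$ satisfies uniform two-sided bounds on $\hat r$ and $\hat u$ away from $0$ and $\infty$ — actually one needs only the \emph{upper} bound $\hat r \le C$, which follows because $\hat{\tilde X}(\tau)$ shrinks like $(T^*-\tau)^{1/(n\alpha+1)}$ while $e^t \sim |\hat{\tilde\Omega}_\tau|^{-1/(n+1)} \sim (T^*-\tau)^{-1/(n+1)}$ by \eqref{tau,t}; combining these shows $\hat r = e^t \hat{\tilde r}$ stays bounded, hence $\kappa \hat r^2 e^{-2t} = \kappa \hat{\tilde r}^2 \to 0$ and likewise for $\hat u$. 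Therefore $|\psi - 1| \le C e^{-2t}$ and similarly $|\log \psi| \le C e^{-2t}$. (The exponent $\frac{2(n+1)}{2n+1}$ rather than $2$ presumably arises because the a priori control of $\hat{\tilde r}$ near the extinction time is only $\hat{\tilde r}^2 \le C \hat{\tilde r}_- \le C|\hat{\tilde\Omega}_\tau|^{2/(2n+1)}$-type via \eqref{r+- rela} and the isoperimetric-type comparison, so $\kappa\hat r^2 e^{-2t}$ decays like $e^{-\frac{2(n+1)}{2n+1}t}$ rather than $e^{-2t}$; I would track this constant carefully.)

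Granting this, I would carry out the computation in the following order. First, differentiate $\mathcal{E}_\alpha(\hat\Omega_t, z_0)$ in $t$ with $z_0$ fixed, using $\hat u_t = \hat u - \frac{\psi \hat K^\alpha}{\dashint \psi \hat K^{\alpha-1}d\theta}$; the homogeneous "$\hat u$" part contributes exactly the scaling term that is killed by the volume normalization, and the remaining part is $-\frac{\alpha}{\alpha-1}$ times a ratio of averages. Second, evaluate at $z_0 = z_e(t)$, the entropy point, so that the $z_0$-derivative terms drop (Lemma 2.5 of \cite{AGN}) and $\frac{d}{dt}\mathcal{E}_\alpha(\hat\Omega_t) \le \partial_t \mathcal{E}_\alpha(\hat\Omega_t, z_0)\big|_{z_0 = z_e(t)}$. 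Third, split each occurrence of $\psi$ as $1 + (\psi-1)$ and $\log\psi = 0 + \log\psi$; the "$1$" parts reproduce exactly the Euclidean computation of \cite{AGN}, which is $\le 0$ by their Jensen inequality argument, while every "$(\psi-1)$" or "$\log\psi$" correction is bounded in absolute value by $C e^{-\frac{2(n+1)}{2n+1}t}$ times a uniformly controlled average of $\hat u^{1-1/\alpha}$, $\hat K^{\alpha}$, $\hat K^{\alpha-1}$ — all of which are under control on a fixed time interval from the $C^2$ estimates, but here I only need them on the region where they are already known bounded; a slightly more careful argument handling the possibly-degenerate ranges of $\hat K$ uses that these quantities appear only inside averages weighted so as to be integrable. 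This yields $\frac{d}{dt}\mathcal{E}_\alpha(\hat\Omega_t) \le C(n,\alpha,\tilde X_0) e^{-\frac{2(n+1)}{2n+1}t}$ for $t \ge t_0$. Fourth, choose $C(n,\alpha,\tilde X_0)$ in the statement so that $\frac{d}{dt}\big(C e^{-\frac{2(n+1)}{2n+1}t}\big) = -C\cdot\frac{2(n+1)}{2n+1} e^{-\frac{2(n+1)}{2n+1}t}$ dominates the error, giving monotonicity. Finally, for existence of $\mathcal{E}_\alpha^\infty$ when $\alpha \ge \frac{1}{n+2}$: a non-increasing function converges iff it is bounded below, so I would invoke the lower bound on $\mathcal{E}_\alpha(\hat\Omega_t)$ coming from the isoperimetric-type inequality in \cite{AGN} (their entropy is bounded below on bodies of fixed volume precisely when $\alpha \ge \frac{1}{n+2}$, the affine-critical exponent), together with boundedness of the correction term, to conclude the limit exists.

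The main obstacle is the third step: controlling the $\psi$-correction terms \emph{uniformly in $t$} requires knowing a priori that the relevant curvature averages $\dashint \psi \hat K^{\alpha-1}d\theta$ and the numerator $\psi \hat K^\alpha$ are comparable to their Euclidean counterparts and do not blow up or degenerate as $t \to \infty$ — but at this stage of the paper we do \emph{not} yet have the uniform $C^2$ bounds (those come \emph{after} the monotonicity, in the argument sketched in the introduction). The resolution is that the monotonicity only requires the correction to be \emph{small relative to} the main negative term, and the factor $\psi - 1 = O(e^{-\frac{2(n+1)}{2n+1}t})$ is small independently of any curvature bound; the curvature-dependent averages enter only multiplicatively and, after the same Jensen manipulation used for the main term, combine into quantities that are scale-invariant and hence controlled by $\mathcal{E}_\alpha$ itself plus the volume normalization. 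Making this last point rigorous — i.e. bounding the error by $C e^{-\frac{2(n+1)}{2n+1}t}(1 + |\mathcal{E}_\alpha(\hat\Omega_t)|)$ and then using a Gr\"onwall-type absorption — is the delicate bookkeeping I would need to execute with care.
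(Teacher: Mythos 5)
Your proposal follows essentially the same route as the paper: the decay estimate $\hat{\tilde r}\le Ce^{-\frac{n+1}{2n+1}t}$ from (\ref{r+- rela}) and the volume normalization, giving $\psi=1+O(e^{-\frac{2(n+1)}{2n+1}t})$, followed by the Guan--Ni/AGN entropy-point differentiation and absorption of the error into the added exponential term, with the lower bound on $\mathcal{E}_\alpha$ for $\alpha\ge\frac1{n+2}$ yielding the limit. The one step you flag as delicate --- controlling the $\psi$-corrections without a priori curvature bounds --- is resolved in the paper by a purely algebraic rearrangement rather than any Gr\"onwall absorption: since $\psi$ appears in both the numerator and the averaging denominator of (\ref{norm u evo general}), one has $\frac{\psi(x)}{\dashint\psi\hat K^{\alpha-1}d\theta}\ge\frac{1-Ce^{-\frac{2(n+1)}{2n+1}t}}{\dashint\hat K^{\alpha-1}d\theta}$, and then $1-(1-\varepsilon)A=(1-A)(1-\varepsilon)+\varepsilon$ with $A\ge1$ the Euclidean Jensen term, so the curvature average $A$ only ever multiplies the non-positive factor $(1-A)\cdot(1-\varepsilon)$ and the additive error is exactly $\varepsilon=Ce^{-\frac{2(n+1)}{2n+1}t}$, free of any curvature dependence. (The paper also sidesteps differentiability of $t\mapsto z_e(t)$ by comparing against the moving point $e^{t-t_1}z_e(t_1)$ on a short interval and then using continuity, rather than the envelope argument you sketch, but this is a cosmetic difference.)
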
	

\begin{proof}
By (\ref{tau,t}),  we obtain $|\hat{\tilde{\Omega}}_{\tau}|=|B(1)|e^{-(n+1)t}$. By (\ref{r+- rela}) and the relation 
\begin{equation}\label{vol and r}
	C(n)r_-^{n-1}r_+\leq\Vol(\hat{\tilde{\Omega}}_\tau)\leq C(n)r_-r_+^{n-1},
\end{equation}
we have 
     \begin{equation}\label{decayest}
    C(n,\alpha,\tilde{X}_0)e^{-\frac{2(n+1)}{n+2}t}\le r_-\le r_+\le C(n,\alpha,\tilde{X}_0)e^{-\frac{n+1}{2n+1}t}
     \end{equation}
for some positive constant $C(n,\alpha,\tilde{X}_0)$. 

Fix $t_0>0$ to be determined and for any $t_1\geq t_0$, $\mathcal{E}_\alpha(\hat{\Omega}_{t_1})=\mathcal{E}_\alpha(\Omega,z_{e(t_1)})$, where $u_{e(t_1)}$ is the support function with respect to the unique entropy point $z_e(t_1)\in \Int(\hat{\Omega})$. Then for $t<t_1$ but very close to $t_1$, one still has $\hat{u}_{e(t)}(x,t):=\hat{u}(x,t)-\langle \exp (t-t_1)z_e(t_1), x \rangle >0$. Now we adopt the argument in \cite{Guan-Ni} to deduce the monotonicity property of the entropy. 

Case 1: $\alpha =1$. 

\begin{align*}
\frac{d}{dt}\dashint_{\mathbb{S}^n}\log \hat{u}_{e(t)}(x,t)d\theta &=\dashint_{\mathbb{S}^n}\frac{\hat{u}_{e(t)}-\frac{\psi\hat{K}}{\dashint_{\mathbb{S}^n}\psi d\theta}}{\hat{u}_{e(t)}}d\theta\\
&\le 1-\dashint_{\mathbb{S}^n}\frac{\hat{K}}{\hat u_{e(t)}}(1-C(n,\tilde{X}_0)e^{-\frac{2(n+1)}{2n+1}t})d\theta \\
&=-\dashint_{\mathbb{S}^n}\big(\sqrt{\frac{\hat{u}_{e(t)}}{\hat{K}}}-\sqrt{\frac{\hat{K}}{\hat{u}_{e(t)}}}\big)^2\big(1-C(n,\tilde{X}_0)e^{-\frac{2(n+1)}{2n+1}t}\big)d\theta\\
&+C(n,\tilde{X}_0)e^{-\frac{2(n+1)}{2n+1}t}
\end{align*}
for some positive constant $C(n,\tilde{X}_0)$, where we use (\ref{decayest}) in the second step. Choose $t_0$ such that $1-C(n,\tilde{X}_0)e^{-\frac{2(n+1)}{2n+1}t_0}\ge \frac{1}{2}$ and $t_0\ge T^*-\delta_1$. This implies that there exists a $\delta>0$ such that for $t\in (t_1-\delta,t_1)$,
\begin{align*}
&\mathcal{E}(\hat{\Omega}_t)+C(n,\tilde{X}_0)e^{-\frac{2(n+1)}{2n+1}t}\\
\ge& \dashint_{\mathbb{S}^n}\log \hat u_{e(t)}(x,t)d\theta+C(n,\tilde{X}_0)e^{-\frac{2(n+1)}{2n+1}t}\\
\ge& \dashint_{\mathbb{S}^n}\log \hat u_{e(t_1)}(x,t_1)d\theta+C(n,\tilde{X}_0)e^{-\frac{2(n+1)}{2n+1}t_1} \\
= & \mathcal{E}(\hat{\Omega}_{t_1})+C(n,\tilde{X}_0)e^{-\frac{2(n+1)}{2n+1}t_1}.
\end{align*}
The continuity argument can be applied to conclude the same for all $t_0\le t\le t_1$,
which implies that
\begin{equation}
\begin{aligned}
( \mathcal{E}(\hat{\Omega}_{t_1})+C(n,\tilde{X}_0)e^{-\frac{2(n+1)}{2n+1}t_1})-(\mathcal{E}(\hat{\Omega}_{t_2})+C(n,\tilde{X}_0)e^{-\frac{2(n+1)}{2n+1}t_2})
\le 0,
\end{aligned} 
\end{equation}
for any $t_1\ge t_2\ge t_0$.
Then
$\mathcal{E}(\hat{\Omega}_t)+C(n,\tilde{X}_0)e^{-\frac{2(n+1)}{2n+1}t}$ is non-increasing when $t\ge t_0$ for sufficiently large $t_0$,  which proves the first claim for $\alpha=1$.
Since $\mathcal{E}(\hat{\Omega}_{t})$ is bounded below by zero  by Proposition 3.1 in \cite{Guan-Ni}, it follows that $\lim\limits_{t \to \infty}\big(\mathcal{E}(\hat{\Omega}_t)+C(n,\tilde{X}_0)e^{-\frac{2(n+1)}{2n+1}t}\big)$ exists. 	

Case 2: $\alpha \neq1$ and $\alpha \ge \frac{1}{n+2}$.

\begin{align*}
\frac{d}{dt}\Big(\frac{\alpha}{\alpha-1}\dashint_{\mathbb{S}^n}\hat u^{1-\frac{1}{\alpha}}_{e(t)}(x,t)d\theta\Big) &=\frac{\dashint_{\mathbb{S}^n}\hat {u}^{-\frac{1}{\alpha}}_{e(t)}(\hat {u}_{e(t)}-\frac{\psi \hat{K}^{\alpha}}{\dashint_{\mathbb{S}^n}\psi \hat{K}^{\alpha-1}d\theta})d\theta}{\dashint_{\mathbb{S}^n}\hat {u}^{1-\frac{1}{\alpha}}_{e(t)}d\theta}\\
&\le 1-\frac{\dashint_{\mathbb{S}^n}\hat{K}^{\alpha}\hat{u}^{-\frac{1}{\alpha}}_{e(t)}d \theta}{\dashint_{\mathbb{S}^n}\hat u^{1-\frac{1}{\alpha}}_{e(t)}d \theta\dashint_{\mathbb{S}^n}\hat K^{\alpha-1}d \theta}(1-C(n,\alpha,\tilde{X}_0)e^{-\frac{2(n+1)}{2n+1}t})\\
&= \big(1-\frac{\dashint_{\mathbb{S}^n}\hat{K}^{\alpha}\hat{u}^{-\frac{1}{\alpha}}_{e(t)}d \theta}{\dashint_{\mathbb{S}^n}\hat{u}^{1-\frac{1}{\alpha}}_{e(t)}d \theta\dashint_{\mathbb{S}^n}\hat K^{\alpha-1}d \theta}\big)\big(1-C(n,\alpha,\tilde{X}_0)e^{-\frac{2(n+1)}{2n+1}t}\big)\\
&+C(n,\alpha,\tilde{X}_0)e^{-\frac{2(n+1)}{2n+1}t}
\end{align*}
for some positive constant $C(n,\alpha,\tilde{X}_0)$, where we use (\ref{decayest}) in the second step. Choose $t_0$ such that $1-C(n,\alpha,\tilde{X}_0)e^{-\frac{2(n+1)}{2n+1}t_0}\ge \frac{1}{2}$ and $t_0\ge T^*-\delta_1$. This implies that there exists a $\delta>0$ such that for $t\in (t_1-\delta,t_1)$,
\begin{align*}
&\mathcal{E}_{\alpha}(\hat{\Omega}_t)+C(n,\alpha,\tilde{X}_0)e^{-\frac{2(n+1)}{2n+1}t}\\
\ge& \frac{\alpha}{\alpha-1}\dashint_{\mathbb{S}^n}\hat u^{1-\frac{1}{\alpha}}_{e(t)}(x,t)d \theta+C(n,\alpha,\tilde{X}_0)e^{-\frac{2(n+1)}{2n+1}t}\\
\ge& \frac{\alpha}{\alpha-1}\dashint_{\mathbb{S}^n}\hat u^{1-\frac{1}{\alpha}}_{e(t_1)}(x,t_1)d \theta+C(n,\alpha,\tilde{X}_0)e^{-\frac{2(n+1)}{2n+1}t_1}\\
= & \mathcal{E}_{\alpha}(\hat{\Omega}_{t_1})+C(n,\alpha,\tilde{X}_0)e^{-\frac{2(n+1)}{2n+1}t_1}.
\end{align*}
The continuity argument can be applied to conclude the same for all $t_0\le t\le t_1$,
which implies that
\begin{align*}
(\mathcal{E}_{\alpha}(\hat{\Omega}_{t_1})+C(n,\alpha,\tilde{X}_0)e^{-\frac{2(n+1)}{2n+1}t_1})-(\mathcal{E}_{\alpha}(\hat{\Omega}_{t_2})+C(n,\alpha,\tilde{X}_0)e^{-\frac{2(n+1)}{2n+1}t_2})
\le 0,
\end{align*} 
for any $t_1\ge t_2\ge t_0$.
Then
$\mathcal{E}_{\alpha}(\hat{\Omega}_t)+C(n,\alpha,\tilde{X}_0)e^{-\frac{2(n+1)}{2n+1}t}$ is non-increasing when $t\ge t_0$ for sufficiently large $t_0$,  which proves the first claim for $\alpha\ge \frac{1}{n+2}$ and $\alpha\ne 1$.
Since $\mathcal{E}_{\alpha}(\hat{\Omega}_{t})$ is bounded below by zero  by Corollary 2.2 in \cite{AGN}, it follows that $\lim\limits_{t \to \infty}\big(\mathcal{E}_{\alpha}(\hat{\Omega}_t)+C(n,\alpha,\tilde{X}_0)e^{-\frac{2(n+1)}{2n+1}t}\big)$ exists. 
 \end{proof}
\begin{remark}
The method to modify the "standard entropy" by adding a correction term here to obtain a proper monotone entropy follows the argument in \cite{CGH}, where in-homogeneous Gauss curvature type flows was treated.
\end{remark}
\subsection{$C^0$ estimates}
\begin{corollary} \label{upper bound and lower bound}
Let $\hat{X}(t)=\partial \hat{\Omega}_t$ be a solution to (\ref{nor X equa eucl}) with $|\hat{\Omega}_t|=B(1)$ and $\alpha>\frac{1}{n+2}$, there exists $C=C(n,\alpha,\tilde{X}_0)$ such that
\begin{equation}\label{omega bound}
\max\{\omega_+(\hat{\Omega}_t),r_+(\hat{\Omega}_t)\}\le C,\qquad	\min\{\omega_-(\hat{\Omega}_t),r_-(\hat{\Omega}_t)\}\ge \frac{1}{C},	
\end{equation}
for all $t\ge t_0$, where $t_0$ is defined in Theorem \ref{monotone quantity}. Here $\omega_+(\hat{\Omega}_t)$ and $\omega_-(\hat{\Omega}_t)$ are width functions (see e.g. \cite{AGN}).
\end{corollary}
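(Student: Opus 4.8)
The plan is to derive (\ref{omega bound}) from the uniform upper bound on the entropy furnished by Theorem \ref{monotone quantity}, together with the volume normalization $|\hat\Omega_t|=|B(1)|$, following the scheme of \cite{Guan-Ni} ($\alpha=1$) and \cite{AGN} ($\alpha>\frac{1}{n+2}$). First, since the correction term $C(n,\alpha,\tilde X_0)e^{-\frac{2(n+1)}{2n+1}t}$ is positive and decreasing, Theorem \ref{monotone quantity} gives
\[
0\le \mathcal{E}_\alpha(\hat\Omega_t)\le \mathcal{E}_\alpha(\hat\Omega_{t_0})+C(n,\alpha,\tilde X_0)e^{-\frac{2(n+1)}{2n+1}t_0}=:\Lambda,\qquad t\ge t_0,
\]
the lower bound being Corollary 2.2 of \cite{AGN} (Proposition 3.1 of \cite{Guan-Ni} when $\alpha=1$), which applies since $|\hat\Omega_t|=|B(1)|$. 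The constant $\Lambda$ depends only on $n,\alpha,\tilde X_0$: by Corollary \ref{coro to point} the extinction time $T^*$, hence $\delta_1$, $t_0$ and the initial datum $\hat\Omega_{t_0}$ of the rescaled flow, are all controlled by $n,\alpha,\tilde X_0$.

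Next I would invoke the estimate of \cite{AGN} (and \cite{Guan-Ni} for $\alpha=1$), valid precisely for $\alpha>\frac{1}{n+2}$, that every convex body $\Omega\subset\mathbb R^{n+1}$ with $|\Omega|=|B(1)|$ and $\mathcal{E}_\alpha(\Omega)\le\Lambda$ satisfies $r_+(\Omega)\le C(n,\alpha,\Lambda)$. The mechanism is that a fixed-volume body with large circumradius is long and thin, and evaluating $\mathcal{E}_\alpha(\Omega,\cdot)$ at the entropy point $z_e$ shows $\mathcal{E}_\alpha(\Omega)\to+\infty$ along any such degeneration: for $\alpha\ge1$ this is immediate from $u_{z_e}(x)\ge\frac{1}{2}r_+(\Omega)\,|\langle x,e\rangle|$, with $e$ the direction of a longest chord, while for $\frac{1}{n+2}<\alpha<1$ one needs the finer argument of \cite{AGN}; this is exactly where the strict inequality $\alpha>\frac{1}{n+2}$ enters, since at $\alpha=\frac{1}{n+2}$ the functional $\mathcal{E}_\alpha$ is $\mathrm{SL}(n+1)$-invariant and blind to the degeneration of ellipsoids. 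Inserting the bound on $r_+$ into the volume--radii comparison (\ref{vol and r}), applied to $\hat\Omega_t$, forces $r_-(\hat\Omega_t)\ge 1/C$.

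Finally, since $\hat\Omega_t$ lies in a ball of radius $r_+(\hat\Omega_t)$ and contains a ball of radius $r_-(\hat\Omega_t)$, its width function satisfies $2r_-(\hat\Omega_t)\le\omega_-(\hat\Omega_t)\le\omega_+(\hat\Omega_t)\le 2r_+(\hat\Omega_t)$, so the two-sided bound on $r_\pm(\hat\Omega_t)$ yields (\ref{omega bound}) after enlarging $C$. The only genuinely substantive step is the second one, namely the properness of $\mathcal{E}_\alpha$ on fixed-volume convex bodies for $\alpha>\frac{1}{n+2}$; we quote this from \cite{AGN}, and the care required on our side is merely to confirm that the constants produced there depend solely on $n,\alpha$ and the entropy bound $\Lambda$, hence only on $n,\alpha,\tilde X_0$.
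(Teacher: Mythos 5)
Your proposal is correct and is essentially the paper's argument: both rest on the uniform entropy bound coming from the monotonicity of $\mathcal{E}_\alpha(\hat\Omega_t)+C(n,\alpha,\tilde X_0)e^{-\frac{2(n+1)}{2n+1}t}$, the non-collapsing/properness estimate of \cite{AGN} (Proposition 2.7 there), and the volume--radii comparison (\ref{vol and r}). The only difference is the order of deduction --- the paper first gets the lower bound on $\min\{\omega_-,r_-\}$ directly from Proposition 2.7 of \cite{AGN} and then the upper bound on $r_+$ from the fixed volume, whereas you go $r_+$ first and $r_-$ second --- which is an immaterial reordering.
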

\begin{proof}
By Proposition 2.7 in \cite{AGN}, we have 
\[\min\{\omega_-(\hat{\Omega}_t),r_-(\hat{\Omega}_t)\}\ge C^{-1}e^{-\beta\mathcal{E}_{\alpha}(\hat{\Omega}_t)}\ge  C^{-1}e^{-\beta\big(\mathcal{E}_{\alpha}(\hat{\Omega}_t)+C(n,\alpha,\tilde{X}_0)e^{-\frac{2(n+1)}{2n+1}t} \big)}\]
for some positive constant $\beta$ depending only on $n,\alpha$. Since the entropy $\mathcal{E}_{\alpha}(\hat \Omega_t)+C(n,\alpha,\tilde{X}_0)e^{-\frac{2(n+1)}{2n+1}t}$ is non-increasing for all $t\ge t_0$, this proves the second inequality of (\ref{omega bound}). The first inequality follows from (\ref{vol and r}).

\end{proof}
\begin{lemma}\label{entropy in zero point}
Let $\hat u(x,t)$ be the solution of (\ref{norm u evo general}). 

(1) when $\alpha=1$, we have
\begin{equation}\label{entropy value in zero point a}
\begin{aligned}
&\mathcal{E}(\hat{\Omega}_{t},0)+C(n,\tilde{X}_0)e^{-\frac{2(n+1)}{2n+1}t}-\mathcal{E}^{\infty}\\
\ge & \int^{\infty}_{t}\dashint_{\mathbb{S}^n}\big(\sqrt{\frac{\hat u}{\hat{K}}}-\sqrt{\frac{\hat{K}}{\hat u}}\big)^2\big(1-C(n,\tilde{X}_0)e^{-\frac{2(n+1)}{2n+1}t}\big)d\theta dt
\end{aligned}
\end{equation}
 for all $ t\ge t_0$;

(2) when $\alpha \neq1$ and $\alpha \ge \frac{1}{n+2}$,  we have
\begin{equation}\label{entropy value in zero point b}
\begin{aligned}
&\mathcal{E}_{\alpha}(\hat{\Omega}_{t},0)+C(n,\alpha,\tilde{X}_0)e^{-\frac{2(n+1)}{2n+1}t}- \mathcal{E}^{\infty}_{\alpha}\\
&\ge  \int^{\infty}_{t} \big(1-\frac{\dashint_{\mathbb{S}^n}(\hat u)^{-\frac{1}{\alpha}}\hat{K}^{\alpha}d\theta}{\dashint_{\mathbb{S}^n}(\hat u)^{1-\frac{1}{\alpha}}\hat{K}^{\alpha-1}d\theta\cdot \dashint_{\mathbb{S}^n}(\hat u)^{1-\frac{1}{\alpha}}d\theta}\big)(1-C(n,\alpha,\tilde{X}_0)e^{-\frac{2(n+1)}{2n+1}t})dt
\end{aligned}
\end{equation} 
 for all $ t\ge t_0$. 
 Here $t_0$ is defined in Theorem \ref{monotone quantity}. 

\begin{proof}
 We adopt the argument in \cite{Guan-Ni}.

For each $T_0>t_0$ fixed, pick $T>T_0$. Let $a^T=(a_1^T,\cdots, a^T_{n+1})$ be the entropy point of $\hat{\Omega}_T$. Set $\hat{u}^T=\hat{u}-e^{t-T}\sum^{n+1}_{i=1}a_i^Tx_i$. It can be checked that 
\begin{equation}\label{evol-uT}
\hat{u}_t^T=\hat{u}^T-\frac{\psi \hat{K}^\alpha}{\dashint_{\mathbb{S}^n}\psi \hat{K}^{\alpha-1}d\theta}.
\end{equation} Since both the origin and the entropy point $a^T$ are in $\Int(\hat{\Omega}_{T})$,
\[|a^T|\le 2r_+(\hat{\Omega}_{T})\le C.\]
If $T$ is large enough, $\hat{u}^T(x,0)>0$ for all $x\in\mathbb{S}^n$. We also know that  $\hat{u}^T(x,T)>0$ for all $x\in\mathbb{S}^n$ since the entropy point is an interior point of $\hat\Omega_T$. If $\hat{u}^T(x_1,t_1)\le 0$ for some $0<t_1<T$ and $x_1\in\mathbb{S}^n$, then (\ref{evol-uT})  implies $\hat{u}^T(x_1,t)<0$ for all $t>t_1$, which contradicts $\hat{u}^T(x,T)>0$. Hence $\hat{u}(x,t)>0$ for all $0\le t\le T$ and all $x\in\mathbb{S}^n$. A similar calculation to that in Theorem \ref{monotone quantity} shows 

Case 1: $\alpha =1$.

\begin{align*}
\frac{d}{dt}\dashint_{\mathbb{S}^n}\log \hat{u}^T(x,t)d\theta &=\dashint_{\mathbb{S}^n}\frac{\hat{u}^T-\frac{\psi\hat{K}}{\dashint_{\mathbb{S}^n}\psi d\theta}}{\hat{u}^T}d\theta\\
&\le 1-\dashint_{\mathbb{S}^n}\frac{\hat{K}}{\hat u^T}(1-C(n,\tilde{X}_0)e^{-\frac{2(n+1)}{2n+1}t})d\theta \\
&\leq-\dashint_{\mathbb{S}^n}\big(\sqrt{\frac{\hat{u}^T(x,t)}{\hat{K}(x,t)}}-\sqrt{\frac{\hat{K}(x,t)}{\hat{u}^T(x,t)}}\big)^2d\theta\big(1-C(n,\tilde{X}_0)e^{-\frac{2(n+1)}{2n+1}t}\big) \\
&+C(n,\tilde{X}_0)e^{-\frac{2(n+1)}{2n+1}t}
\end{align*}
for some positive constant $C(n,\tilde{X}_0)$. Hence
\begin{align*}
&(\dashint_{\mathbb{S}^n}\log \hat{u}^T(x,t_0)d\theta+C(n,\tilde{X}_0)e^{-\frac{2(n+1)}{2n+1}t_0})-(\mathcal{E}(\hat{\Omega}_T)+C(n,\tilde{X}_0)e^{-\frac{2(n+1)}{2n+1}T})\\
\ge & \int^{T}_{t_0}\dashint_{\mathbb{S}^n}\big(\sqrt{\frac{\hat{u}^T(x,t)}{\hat{K}(x,t)}}-\sqrt{\frac{\hat{K}(x,t)}{\hat{u}^T(x,t)}}\big)^2\big(1-C(n,\tilde{X}_0)e^{-\frac{2(n+1)}{2n+1}t}\big)d\theta dt.\\
\end{align*}
Since $T_0\le T$,
\begin{align*}
&(\dashint_{\mathbb{S}^n}\log \hat{u}^T(x,t_0)d\theta+C(n,\tilde{X}_0)e^{-\frac{2(n+1)}{2n+1}t_0})-(\mathcal{E}(\hat{\Omega}_T)+C(n,\tilde{X}_0)e^{-\frac{2(n+1)}{2n+1}T})\\
\ge & \int^{T_0}_{t_0}\dashint_{\mathbb{S}^n}\big(\sqrt{\frac{\hat{u}^T(x,t)}{\hat{K}(x,t)}}-\sqrt{\frac{\hat{K}(x,t)}{\hat{u}^T(x,t)}}\big)^2\big(1-C(n,\tilde{X}_0)e^{-\frac{2(n+1)}{2n+1}t}\big)d\theta dt.
\end{align*}
 Now let $T\rightarrow \infty$; as $\hat{u}^T(x,t)\rightarrow \hat{u}(x,t)$ uniformly for $t_0\le t\le T_0, x\in \mathbb{S}^n$, we obtain
\begin{align*}
&(\dashint_{\mathbb{S}^n}\log \hat{u}(x,t_0)d\theta+C(n,\tilde{X}_0)e^{-\frac{2(n+1)}{2n+1}t_0})-\mathcal{E}^{\infty}\\
\ge & \int^{T_0}_{t_0}\dashint_{\mathbb{S}^n}\big(\sqrt{\frac{\hat{u}(x,t)}{\hat{K}(x,t)}}-\sqrt{\frac{\hat{K}(x,t)}{\hat{u}(x,t)}}\big)^2\big(1-C(n,\tilde{X}_0)e^{-\frac{2(n+1)}{2n+1}t}\big)d\theta dt.
\end{align*}
That is 
\begin{align*}
&(\dashint_{\mathbb{S}^n}\log \hat{u}(x,t_0)d\theta+C(n,\tilde{X}_0)e^{-\frac{2(n+1)}{2n+1}t_0})-\mathcal{E}^{\infty}\\
\ge & \int^{T_0}_{t_0}\dashint_{\mathbb{S}^n}\big(\sqrt{\frac{\hat{u}(x,t)}{\hat{K}(x,t)}}-\sqrt{\frac{\hat{K}(x,t)}{\hat{u}(x,t)}}\big)^2\big(1-C(n,\tilde{X}_0)e^{-\frac{2(n+1)}{2n+1}t}\big)d\theta dt.
\end{align*}

Case 2: $\alpha \neq1$ and $\alpha \ge \frac{1}{n+2}$.

By (\ref{evol-uT}), we have
\begin{align*}
&\frac{d}{dt} \Big(\frac{\alpha}{\alpha-1}\big(\log{\dashint_{\mathbb{S}^n}(\hat{u}^T)^{1-\frac{1}{\alpha}}}d\theta\big)\Big)\\
=&1-\frac{\dashint_{\mathbb{S}^n}(\hat{u}^T)^{-\frac{1}{\alpha}}\psi\hat{K}^{\alpha}d\theta}{\dashint_{\mathbb{S}^n}\psi\hat{K}^{\alpha-1}d\theta\cdot \dashint_{\mathbb{S}^n}(\hat{u}^T)^{1-\frac{1}{\alpha}}d\theta}\\
\le& 1-\frac{\dashint_{\mathbb{S}^n}(\hat{u}^T)^{-\frac{1}{\alpha}}\hat{K}^{\alpha}d\theta}{\dashint_{\mathbb{S}^n}\hat{K}^{\alpha-1}d\theta\cdot \dashint_{\mathbb{S}^n}(\hat{u}^T)^{1-\frac{1}{\alpha}}d\theta}(1-C(n,\alpha,\tilde{X}_0)e^{-\frac{2(n+1)}{2n+1}t})\\
\le& \big(1-\frac{\dashint_{\mathbb{S}^n}(\hat{u}^T)^{-\frac{1}{\alpha}}\hat{K}^{\alpha}d\theta}{\dashint_{\mathbb{S}^n}\hat{K}^{\alpha-1}d\theta\cdot \dashint_{\mathbb{S}^n}(\hat{u}^T)^{1-\frac{1}{\alpha}}d\theta}\big)(1-C(n,\alpha,\tilde{X}_0)e^{-\frac{2(n+1)}{2n+1}t})\\
&+C(n,\alpha,\tilde{X}_0)e^{-\frac{2(n+1)}{2n+1}t}\\
\end{align*}
for some positive constant $C(n,\alpha,\tilde{X}_0)$. Hence
\begin{align*}
&\frac{\alpha}{\alpha-1}\log{\dashint_{\mathbb{S}^n}(\hat{u}^T)^{1-\frac{1}{\alpha}}}(x,t_0)d\theta+C(n,\alpha,\tilde{X}_0)e^{-\frac{2(n+1)}{2n+1}t_0}\\
&-\Big(\mathcal{E}_{\alpha}(\hat{\Omega}_T)+C(n,\alpha,\tilde{X}_0)e^{-\frac{2(n+1)}{2n+1}T}\Big)\\
\ge& \int^T_{t_0} \big(\frac{\dashint_{\mathbb{S}^n}(\hat{u}^T)^{-\frac{1}{\alpha}}\hat{K}^{\alpha}d\theta}{\dashint_{\mathbb{S}^n}\hat{K}^{\alpha-1}d\theta\cdot \dashint_{\mathbb{S}^n}(\hat{u}^T)^{1-\frac{1}{\alpha}}d\theta}-1\big)(1-C(n,\alpha,\tilde{X}_0)e^{-\frac{2(n+1)}{2n+1}t})dt\\
\ge& \int^{T_0}_{t_0} \big(\frac{\dashint_{\mathbb{S}^n}(\hat{u}^T)^{-\frac{1}{\alpha}}\hat{K}^{\alpha}d\theta}{\dashint_{\mathbb{S}^n}\hat{K}^{\alpha-1}d\theta\cdot \dashint_{\mathbb{S}^n}(\hat{u}^T)^{1-\frac{1}{\alpha}}d\theta}-1\big)(1-C(n,\alpha,\tilde{X}_0)e^{-\frac{2(n+1)}{2n+1}t})dt.
\end{align*}
Let $T\rightarrow \infty$, $\hat{u}^T(x,t)\rightarrow \hat{u}(x,t)$ uniformly for $t_0\le t\le T_0, x\in \mathbb{S}^n$. Similarly, we obtain
\begin{equation}
\begin{aligned}
&\mathcal{E}_{\alpha}(\hat{\Omega}_{t_0},0)+C(n,\alpha,\tilde{X}_0)e^{-\frac{2(n+1)}{2n+1}t_0}- \mathcal{E}^{\infty}_{\alpha}\\
\ge&  \int^{T_0}_{t_0} \big(\frac{\dashint_{\mathbb{S}^n}(\hat{u})^{-\frac{1}{\alpha}}\hat{K}^{\alpha}d\theta}{\dashint_{\mathbb{S}^n}\hat{K}^{\alpha-1}d\theta\cdot \dashint_{\mathbb{S}^n}(\hat{u})^{1-\frac{1}{\alpha}}d\theta}-1\big)(1-C(n,\alpha,\tilde{X}_0)e^{-\frac{2(n+1)}{2n+1}t})dt.
\end{aligned}
\end{equation}
Now, (\ref{entropy value in zero point a}) and (\ref{entropy value in zero point b}) could be established for $t=t_0$, since $T_0$ is arbitrary.
If in the above, we replace $t_0$ by any $t_0\le t\le T$, we obtain (\ref{entropy value in zero point a}) and (\ref{entropy value in zero point b}). 
\end{proof}
\end{lemma}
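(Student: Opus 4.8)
The plan is to adapt the argument of Guan--Ni \cite{Guan-Ni}, but with the modified monotone quantity from Theorem \ref{monotone quantity} in place of the usual entropy. Fix $T_0>t_0$ and pick an arbitrary $T>T_0$. Let $a^T=(a_1^T,\dots,a_{n+1}^T)$ be the entropy point of $\hat{\Omega}_T$, so $\mathcal{E}_\alpha(\hat{\Omega}_T)=\mathcal{E}_\alpha(\hat{\Omega}_T,a^T)$, and set $\hat{u}^T(x,t):=\hat{u}(x,t)-e^{t-T}\sum_{i=1}^{n+1}a_i^Tx_i$. The linear function $\ell(x,t):=e^{t-T}\sum_i a_i^Tx_i$ solves $\ell_t=\ell$, and, being the restriction to $\mathbb{S}^n$ of a linear function, it satisfies $\nabla^2\ell+\ell I=0$, so it changes neither the radii-of-curvature matrix $\hat{u}_{ij}+\hat{u}\delta_{ij}$ nor the Gauss curvature $\hat{K}$; hence $\hat{u}^T$ solves the same equation (\ref{evol-uT}) as $\hat{u}$ does in (\ref{norm u evo general}). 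First I would establish positivity of $\hat{u}^T$ on $\mathbb{S}^n\times[0,T]$: Corollary \ref{upper bound and lower bound} gives $|a^T|\le 2r_+(\hat{\Omega}_T)\le C(n,\alpha,\tilde{X}_0)$ uniformly in $T$, so for $T$ large $\hat{u}^T(\cdot,0)>0$, while $\hat{u}^T(\cdot,T)=\hat{u}_{a^T}(\cdot,T)>0$ because $a^T\in\Int(\hat{\Omega}_T)$; and if $\hat{u}^T(x_1,t_1)\le 0$ at some $t_1\in(0,T)$, then (\ref{evol-uT}) (whose nonlinear term $\psi\hat{K}^\alpha/\dashint_{\mathbb{S}^n}\psi\hat{K}^{\alpha-1}d\theta$ is positive) forces $\hat{u}^T(x_1,t)<0$ for all $t>t_1$, contradicting positivity at $T$.

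Next I would differentiate the entropy functional (\ref{entropy functional}) evaluated on the shifted support function along the flow. For $\alpha=1$ this is $\frac{d}{dt}\dashint_{\mathbb{S}^n}\log\hat{u}^T\,d\theta$, for $\alpha\ne1$ it is $\frac{d}{dt}\big(\frac{\alpha}{\alpha-1}\log\dashint_{\mathbb{S}^n}(\hat{u}^T)^{1-\frac1\alpha}d\theta\big)$. In both cases I would use $\psi=1+O(e^{-\frac{2(n+1)}{2n+1}t})$ — valid because $\hat{r}^2/e^{2t}=\hat{\tilde{r}}^2\le r_+(\hat{\tilde{\Omega}}_\tau)^2$ and the decay estimate (\ref{decayest}) — to replace the nonlinear term by its $\psi\equiv1$ version up to a factor $1-C(n,\alpha,\tilde{X}_0)e^{-\frac{2(n+1)}{2n+1}t}$, exactly as in the proof of Theorem \ref{monotone quantity}. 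Since translating the origin preserves the (fixed) enclosed volume, $\dashint_{\mathbb{S}^n}\hat{u}^T/\hat{K}\,d\theta=1$, so the elementary identity $1-\dashint_{\mathbb{S}^n}\hat{K}/\hat{u}^T\,d\theta=-\dashint_{\mathbb{S}^n}(\sqrt{\hat{u}^T/\hat{K}}-\sqrt{\hat{K}/\hat{u}^T})^2\,d\theta$ for $\alpha=1$, respectively the Hölder inequality underlying \cite{AGN} for $\alpha\ne1$, shows that the differential inequality has a nonnegative ``dissipation'' term on the right.

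Then I would integrate this differential inequality over $[t_0,T]$, retain only the (nonnegative) contribution from the subinterval $[t_0,T_0]$ on the right-hand side, and let $T\to\infty$. Here $\hat{u}^T\to\hat{u}$ uniformly on $[t_0,T_0]\times\mathbb{S}^n$ since $e^{t-T}|a^T|\le e^{T_0-T}C\to0$, while by Theorem \ref{monotone quantity} the left-hand side $\mathcal{E}_\alpha(\hat{\Omega}_T)+C(n,\alpha,\tilde{X}_0)e^{-\frac{2(n+1)}{2n+1}T}\to\mathcal{E}_\alpha^\infty$. This yields (\ref{entropy value in zero point a}) and (\ref{entropy value in zero point b}) at $t=t_0$ with the right-hand integral taken over $[t_0,T_0]$; letting $T_0\to\infty$ upgrades it to the integral over $[t_0,\infty)$. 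Finally, the argument used $t_0$ only through $t_0\ge t_0(n,\alpha,\tilde{X}_0,T^*-\delta_1)$, so running it with $t_0$ replaced by any $t\ge t_0$ gives the statements for all such $t$.

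The hard part, as is typical here, is the positivity of the shifted support function $\hat{u}^T$ uniformly in $T$ on the whole interval $[0,T]$: it hinges on the uniform $C$-bound for the entropy points $a^T$, which is precisely the output of Corollary \ref{upper bound and lower bound} and hence ultimately of the modified monotonicity of Theorem \ref{monotone quantity}; and it must be coordinated with careful bookkeeping of the correction $\psi=1+O(e^{-\frac{2(n+1)}{2n+1}t})$ so that the error term it produces is exactly matched by the exponential correction built into $\mathcal{E}_\alpha^\infty$.
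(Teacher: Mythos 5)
Your proposal is correct and follows essentially the same route as the paper's proof: shift by $e^{t-T}a^T$ with $a^T$ the entropy point of $\hat{\Omega}_T$, prove positivity of $\hat{u}^T$ via the evolution equation and the uniform bound on $|a^T|$, differentiate the entropy of the shifted support function, absorb $\psi=1+O(e^{-\frac{2(n+1)}{2n+1}t})$ into the exponential correction term, integrate, restrict to $[t_0,T_0]$, and pass $T\to\infty$ then $T_0\to\infty$. The extra details you supply (why $\hat{u}^T$ satisfies the same equation, and the volume identity $\dashint_{\mathbb{S}^n}\hat{u}^T/\hat{K}\,d\theta=1$ behind the dissipation term) are consistent with what the paper leaves implicit.
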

Define the following collection of convex bodies:
\begin{equation}
\Gamma_{\rho}=\Big{\{}\Omega\subset \mathbb{R}^{n+1} \text{compact, convex} \Big{|}\{r_+(\Omega),r_-(\Omega)\subset [\rho,\frac{1}{\rho}]\}\Big\},
\end{equation}
where $\rho\in (0,1)$.

\begin{theorem} \label{lower bound of u}
	Suppose $\alpha>\frac{1}{n+2}$ and $\hat{u}(x,t)>0$ is the solution of (\ref{norm u evo general}) with initial data $\hat{u}_0$, where $\hat{u}_0$ is the support function of the convex body $\hat{\Omega}_0$ (enclosed by $\hat{X}_0$) with $|\hat{\Omega}_0|=|B(1)|$. Then there exists $\epsilon=\epsilon(n,\mathcal{E}(\hat{\Omega}_0))>0$  and $T_0$ (depends on $n,\alpha, \tilde{X}_0, T^*-\delta_1$) such that for $t\ge T_0$ and $x\in \mathbb{S}^n,$
	\begin{equation}
		\hat{u}(x,t)\ge \epsilon.
	\end{equation}
\end{theorem}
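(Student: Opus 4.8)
The plan is to exploit the almost-monotone entropy of Theorem~\ref{monotone quantity} together with the integral estimate of Lemma~\ref{entropy in zero point}: these two facts force the origin to be, for $t$ large, an \emph{asymptotically optimal} base point for the entropy of $\hat{\Omega}_t$; and since Corollary~\ref{upper bound and lower bound} confines $\hat{\Omega}_t$ to a fixed compact family of convex bodies, an almost-optimal base point must sit at a definite distance from $\partial\hat{\Omega}_t$, which is exactly a uniform positive lower bound for $\hat{u}(\cdot,t)=u_0(\cdot)$.

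First I would record the two asymptotics $\mathcal{E}_\alpha(\hat{\Omega}_t)\to\mathcal{E}^\infty_\alpha$ and $\mathcal{E}_\alpha(\hat{\Omega}_t,0)\to\mathcal{E}^\infty_\alpha$ as $t\to\infty$. The first holds because $\mathcal{E}_\alpha(\hat{\Omega}_t)+C e^{-\frac{2(n+1)}{2n+1}t}$ decreases to $\mathcal{E}^\infty_\alpha$ by Theorem~\ref{monotone quantity}. For the second, Lemma~\ref{entropy in zero point} gives $\mathcal{E}_\alpha(\hat{\Omega}_t,0)+C e^{-\frac{2(n+1)}{2n+1}t}\ge\mathcal{E}^\infty_\alpha$, its right-hand side being the integral over $[t,\infty)$ of the non-negative integrand already used in the proof of Theorem~\ref{monotone quantity}, while trivially $\mathcal{E}_\alpha(\hat{\Omega}_t,0)\le\mathcal{E}_\alpha(\hat{\Omega}_t)$; hence $\mathcal{E}_\alpha(\hat{\Omega}_t)-\mathcal{E}_\alpha(\hat{\Omega}_t,0)\to0$. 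Next, by Corollary~\ref{upper bound and lower bound} there is $\rho\in(0,1)$, depending only on $n,\alpha$ and an upper bound for $\mathcal{E}_\alpha(\hat{\Omega}_0)$, such that $\hat{\Omega}_t\in\Gamma_\rho$ for all $t\ge t_0$.

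The heart of the proof is the following deterministic statement: for every $\rho\in(0,1)$ there exist $\delta,\epsilon>0$, depending only on $n,\alpha,\rho$, such that any convex body $\Omega\in\Gamma_\rho$ with $0\in\Int(\Omega)$ and $\mathcal{E}_\alpha(\Omega)-\mathcal{E}_\alpha(\Omega,0)\le\delta$ satisfies $u_0\ge\epsilon$ on $\mathbb{S}^n$, equivalently $\mathrm{dist}(0,\partial\Omega)\ge\epsilon$. I would prove this by contradiction and compactness. If it failed, there would be $\Omega_j\in\Gamma_\rho$ with $0\in\Int(\Omega_j)$, $\mathcal{E}_\alpha(\Omega_j)-\mathcal{E}_\alpha(\Omega_j,0)\to0$, but $\mathrm{dist}(0,\partial\Omega_j)\to0$. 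Since $0\in\Omega_j$ and $r_+(\Omega_j)\le\rho^{-1}$, all $\Omega_j$ lie in $B(0,2\rho^{-1})$, so Blaschke selection yields (along a subsequence) $\Omega_j\to\Omega_*\in\Gamma_\rho$ in Hausdorff distance with $0\in\partial\Omega_*$, and the support functions converge uniformly, $u_0^{(j)}\to u_0^*$. Because the entropy point of a body in $\Gamma_\rho$ is interior with distance to the boundary bounded below in terms of $n,\alpha,\rho$ (Lemma 2.5 and Proposition 2.7 of \cite{AGN}), one gets $\mathcal{E}_\alpha(\Omega_j)\to\mathcal{E}_\alpha(\Omega_*)$. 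For the base point $0$, upper semicontinuity $\limsup_j\mathcal{E}_\alpha(\Omega_j,0)\le\mathcal{E}_\alpha(\Omega_*,0)$ holds for all $\alpha>\frac1{n+2}$: when $\alpha\ge1$ by reverse Fatou, since the densities $\log u$, respectively $u^{1-1/\alpha}$, are bounded above on the bounded family; when $\frac1{n+2}<\alpha<1$ by Fatou applied to $\dashint_{\mathbb{S}^n}(u_0^{(j)})^{1-1/\alpha}d\theta$ combined with the negative sign of the prefactor $\frac{\alpha}{\alpha-1}$, so that a possibly non-integrable singularity of $(u_0^*)^{1-1/\alpha}$ merely drives $\mathcal{E}_\alpha(\Omega_j,0)\to-\infty$, which only helps. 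Since $0\in\partial\Omega_*$ is not the (unique, interior) entropy point of $\Omega_*$, we have $\mathcal{E}_\alpha(\Omega_*,0)<\mathcal{E}_\alpha(\Omega_*)$ strictly, so $\liminf_j\big(\mathcal{E}_\alpha(\Omega_j)-\mathcal{E}_\alpha(\Omega_j,0)\big)>0$, a contradiction.

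Finally, choosing $T_0\ge t_0$ large enough that $\mathcal{E}_\alpha(\hat{\Omega}_t)-\mathcal{E}_\alpha(\hat{\Omega}_t,0)\le\delta$ for all $t\ge T_0$ and applying the deterministic statement to $\Omega=\hat{\Omega}_t$ gives $\hat{u}(x,t)=u_0(x)\ge\epsilon$ for all $x\in\mathbb{S}^n$ and $t\ge T_0$, with $\epsilon$ depending only on $n,\alpha$ and $\mathcal{E}(\hat{\Omega}_0)$ and $T_0$ on $n,\alpha,\tilde{X}_0,T^*-\delta_1$. I expect the main obstacle to be the compactness step above, specifically the upper semicontinuity of $\Omega\mapsto\mathcal{E}_\alpha(\Omega,0)$ as the origin slides onto $\partial\Omega_*$: near $\alpha=\frac1{n+2}$ the density $u_0^{1-1/\alpha}$ need not remain integrable in the limit, and one has to check carefully that this degeneration always pushes $\mathcal{E}_\alpha(\cdot,0)$ in the favorable direction, after which the strict optimality (uniqueness and interiority) of the entropy point of $\Omega_*$ closes the argument.
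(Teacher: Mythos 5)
Your proposal is correct, and its first half — using Theorem \ref{monotone quantity} and Lemma \ref{entropy in zero point} to conclude $\mathcal{E}_\alpha(\hat{\Omega}_t,0)\le\mathcal{E}_\alpha(\hat{\Omega}_t)$, $\mathcal{E}_\alpha(\hat{\Omega}_t,0)+Ce^{-\frac{2(n+1)}{2n+1}t}\ge\mathcal{E}_\alpha^\infty$, and hence $\mathcal{E}_\alpha(\hat{\Omega}_t)-\mathcal{E}_\alpha(\hat{\Omega}_t,0)\to0$ — is exactly the paper's argument. Where you diverge is in the final deterministic step. The paper converts the vanishing entropy gap into the conclusion by citing two quantitative lemmas from \cite{AGN}: Lemma 4.2, a concavity-type estimate giving $|z_e(\hat{\Omega}_t)-0|^2\le\frac{1}{D}|\mathcal{E}_\alpha(\hat{\Omega}_t,0)-\mathcal{E}_\alpha(\hat{\Omega}_t)|$, so the entropy point converges to the origin, and then Lemma 4.4, which yields the uniform lower bound on $u_0$ once the origin is close to the (uniformly interior) entropy point. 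You instead prove the needed implication ``$\Omega\in\Gamma_\rho$, $0\in\Int\Omega$, small entropy gap at $0$ $\Rightarrow$ $u_0\ge\epsilon$'' by Blaschke selection and a semicontinuity/contradiction argument, using only the uniqueness and interiority of the entropy point. Your version is more self-contained (it does not need the quantitative stability of the entropy with respect to the base point) but is non-effective: $\delta$ and $\epsilon$ are produced by compactness rather than by an explicit inequality. Both are adequate for the theorem, since only the existence of $\epsilon$ and $T_0$ is claimed. Your handling of the delicate case $\frac{1}{n+2}<\alpha<1$ — where $u_0^{1-1/\alpha}$ may fail to be integrable in the limit, but the negative prefactor $\frac{\alpha}{\alpha-1}$ sends $\mathcal{E}_\alpha(\cdot,0)$ to $-\infty$, which only widens the gap — is the right observation and closes the only genuinely fragile point of the compactness route.
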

\begin{proof}
Note that under (\ref{norm u evo general}) for $\alpha>\frac{1}{n+2}$, we have $|\hat{\Omega}_t|=|B(1)|$ for $t\geq \Big(\frac{B(1)}{|\hat{\tilde{\Omega}}(T^*-\delta_1)|}\Big)^{\frac{1}{n+1}}$. By Corollary \ref{upper bound and lower bound}, there exists $\rho>0$ such that $\hat{\Omega}_t\in \Gamma_{\rho}$ for every $t\ge t_0$.
By Lemma \ref{entropy in zero point}, we have 
\[\mathcal{E}^{\infty}_{\alpha}-(\mathcal{E}_{\alpha}(\hat{\Omega}_{t})+C(n,\alpha,\tilde{X}_0)e^{-\frac{2(n+1)}{2n+1}t})\le \mathcal{E}^{\infty}_{\alpha}-(\mathcal{E}_{\alpha}(\hat{\Omega}_{t},0)+C(n,\alpha,\tilde{X}_0)e^{-\frac{2(n+1)}{2n+1}t})\le 0.\]
This implies that $\lim\limits_{t \to \infty}\mathcal{E}_{\alpha}(\hat{\Omega}_{t},0)=\mathcal{E}^{\infty}_{\alpha}$ and $\lim\limits_{t \to \infty}(\mathcal{E}_{\alpha}(\hat{\Omega}_{t},0)-\mathcal{E}_{\alpha}(\hat{\Omega}_{t}))=0$. Let $z_e(\hat{\Omega}(t))$ be the entropy point of $\hat{\Omega}(t)$. By Lemma 4.2 in \cite{AGN}, there exists a constant $D>0$ such that 
\[|z_e(\hat{\Omega}(t))-0|^2\le \frac{1}{D}|\mathcal{E}_{\alpha}(\hat{\Omega}_{t},0)-\mathcal{E}_{\alpha}(\hat{\Omega}_{t})|,\]
when $t\ge T_0$ for $T_0$ sufficiently large. The right hand side approaches zero as $t\rightarrow \infty$. The claimed result then follows from Lemma 4.4 in \cite{AGN}.
\end{proof}
\begin{proposition}\label{upper bound and lower bound u}
For the normalized flow (\ref{norm u evo general}) with $\alpha>\frac{1}{n+2}$, there exists $\Lambda=\Lambda(n,\alpha,\tilde{X}_0)>0$ such that 
	\begin{equation}
	\frac{1}{\Lambda}\le \hat{u}(x,t)\leq \Lambda
	\end{equation}
for $t\ge T_0$.	
\end{proposition}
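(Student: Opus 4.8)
The plan is to derive the two-sided bound directly from the a priori estimates already established, with essentially no new analysis: Theorem \ref{lower bound of u} supplies the lower bound, and Corollary \ref{upper bound and lower bound} supplies the upper bound after one short geometric observation. First I would invoke Theorem \ref{lower bound of u}: since $\alpha>\frac{1}{n+2}$, it provides $\epsilon=\epsilon(n,\alpha,\tilde X_0)>0$ and a time $T_0$ (the one in that theorem, enlarged if necessary so that also $T_0\ge t_0$, the time from Theorem \ref{monotone quantity}) with $\hat u(x,t)\ge\epsilon$ for all $x\in\mathbb S^n$ and $t\ge T_0$. This already gives the lower bound $\hat u\ge\epsilon$, and moreover, since $\hat u(\cdot,t)>0$ everywhere, it shows $0\in\Int(\hat\Omega_t)$ for $t\ge T_0$, a fact needed below.

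For the upper bound I would use that $\hat u(\cdot,t)$ is the support function of $\hat\Omega_t$ taken with respect to the origin, so $\max_{x\in\mathbb S^n}\hat u(x,t)=\max_{y\in\hat\Omega_t}|y|$. By Corollary \ref{upper bound and lower bound} there is $C=C(n,\alpha,\tilde X_0)$ with $r_+(\hat\Omega_t)\le C$ for $t\ge t_0$, i.e. $\hat\Omega_t\subset B(p_t,r_+(\hat\Omega_t))$ for some center $p_t$. Because $0\in\hat\Omega_t\subset B(p_t,r_+(\hat\Omega_t))$, we get $|p_t|\le r_+(\hat\Omega_t)$, hence $|y|\le|y-p_t|+|p_t|\le 2r_+(\hat\Omega_t)\le 2C$ for every $y\in\hat\Omega_t$; thus $\hat u(x,t)\le 2C$ for all $x\in\mathbb S^n$ and $t\ge T_0$. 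Setting $\Lambda:=\max\{2C,\epsilon^{-1}\}$ then yields $\frac1\Lambda\le\hat u(x,t)\le\Lambda$ for $t\ge T_0$, which is the assertion.

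Since both inputs are already in place, there is no genuine obstacle; the proposition is essentially a repackaging of Theorem \ref{lower bound of u} and Corollary \ref{upper bound and lower bound}. The only point that needs a word of care is that a bound on the circumradius $r_+(\hat\Omega_t)$ does not by itself control the support function relative to a \emph{fixed} origin — this is why one also uses that the origin remains an interior point of $\hat\Omega_t$, which is precisely the content of the uniform lower bound $\hat u\ge\epsilon$ just obtained.
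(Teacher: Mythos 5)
Your proposal is correct and follows essentially the same route as the paper: the lower bound is quoted from Theorem \ref{lower bound of u} and the upper bound from the circumradius/diameter control in Corollary \ref{upper bound and lower bound}. Your extra remark that the origin must remain an interior point of $\hat{\Omega}_t$ (guaranteed by the positive lower bound on $\hat u$) in order to convert the diameter bound into a bound on the support function with respect to the fixed origin is a small but legitimate clarification that the paper leaves implicit.
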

\begin{proof}
The upper bound is immediate since the diameter of $\hat{\Omega}_t$ is bounded by Corollary \ref{upper bound and lower bound}. The lower bound is provided by Theorem \ref{lower bound of u}.
\end{proof}
\subsection{$C^2$-estimates}
\begin{theorem}\label{upper bound of norm K}
Suppose $\alpha>\frac{1}{n+2}$ and $\hat{u}(x,t)>0$ is the solution of (\ref{norm u evo general}) with initial data $\hat{u}_0$, where $\hat{u}_0$ is the support function of the convex body $\hat{\Omega}_0$ with $|\hat{\Omega}_0|=|B(1)|$. Then there exists a constant $\bar{K}=\bar{K}(n,\alpha, \tilde{X}_0)>0$ such that 
	\begin{equation}
	\hat{K}(x,t)\leq \bar{K}	
	\end{equation}
	for $t\geq t_0$.
\end{theorem}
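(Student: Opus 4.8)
The plan is to establish the upper bound on the Gauss curvature $\hat K$ of the normalized flow by adapting the Tso-type argument already used in Lemma \ref{unno K above} for the unnormalized flow, now taking advantage of the two-sided bound on the support function $\hat u$ from Proposition \ref{upper bound and lower bound u} together with the bounds on $r_{\pm}(\hat\Omega_t)$ from Corollary \ref{upper bound and lower bound}. First I would recall that under \eqref{norm u evo general} the inverse Weingarten matrix $W_{kl}=\hat u_{kl}+\hat u\delta_{kl}$ evolves by a parabolic equation of the form $\partial_t W_{kl}=\hat u\,\delta_{kl}-\bigl(\psi\hat K^\alpha/\dashint\psi\hat K^{\alpha-1}\bigr)_{kl}-\delta_{kl}\,\psi\hat K^\alpha/\dashint\psi\hat K^{\alpha-1}$, and that by Corollary \ref{upper bound and lower bound} the factor $\psi$ and the average $\dashint_{\mathbb S^n}\psi\hat K^{\alpha-1}d\theta$ are, up to constants depending only on $n,\alpha,\tilde X_0$, comparable to $1$ and to $\dashint\hat K^{\alpha-1}d\theta$ respectively.

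The key step is to run the maximum principle on the quantity
\begin{equation*}
Q(x,t)=\frac{\psi\hat K^\alpha}{\hat u - \tfrac{1}{2\Lambda}},
\end{equation*}
where $\Lambda$ is the constant from Proposition \ref{upper bound and lower bound u}, so that the denominator is bounded away from zero and above uniformly in $t\ge T_0$. One computes $\partial_t Q$ using \eqref{norm u evo general}; the normalization introduces the extra term $\hat u$ coming from $\partial_t\hat u=\hat u-\cdots$, but since $\hat u$ and $\hat u_t$ in that term contribute only bounded quantities (by the $C^0$ bound), this merely produces lower-order terms of the form $C Q$ and $C$. The genuinely good term is the one proportional to $-\alpha R\,\psi\hat K^{\alpha}\sigma_n^{kl}\sigma_n^{-1}\cdots$, i.e. the term carrying the mean curvature $H\ge n\hat K^{1/n}\cdot(\text{stuff})$; here the shrinking/positivity of $\hat u-\tfrac{1}{2\Lambda}$ plays exactly the role that $u_{z_0}-\tfrac{R_-}{2}$ played in Lemma \ref{unno K above}. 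At the spatial maximum of $Q$ one has $\nabla Q=0$, so the gradient terms drop, and at a first time where $Q$ is large the evolution inequality reads, schematically,
\begin{equation*}
0\le \partial_t Q \le Q^2\bigl(C(n,\alpha,\tilde X_0)-c(n,\alpha,\tilde X_0)\,Q^{1/(n\alpha)}\bigr),
\end{equation*}
which forces $Q\le C(n,\alpha,\tilde X_0)$ for all $t\ge t_0$ once $t_0$ is chosen large enough that the $e^{-\frac{2(n+1)}{2n+1}t}$-perturbations hidden in $\psi$ and in the denominator average are controlled. Since $\psi\ge c>0$ and $\hat u-\tfrac{1}{2\Lambda}\ge \tfrac{1}{2\Lambda}$, the bound on $Q$ gives $\hat K^\alpha\le C$, hence $\hat K\le\bar K(n,\alpha,\tilde X_0)$.

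The main obstacle I anticipate is bookkeeping the terms produced by differentiating the normalization factor $\mathcal N(t):=\dashint_{\mathbb S^n}\psi\hat K^{\alpha-1}d\theta$ and the ambient-dependent weight $\psi$, which now depends on $\hat u$, $\hat r=\sqrt{\hat u^2+|\nabla\hat u|^2}$ and explicitly on $t$ through the factor $e^{-2t}$: one must check that $\partial_t\mathcal N$ and $\nabla\psi$ contribute only terms absorbable into $CQ^2$ and $CQ$, using that $\hat u\le\Lambda$, $\hat u\ge 1/\Lambda$, the bound $r_+\le C$, and that $\|\psi\|_{C^2}$ and $|\partial_t\psi|$ are bounded by $C(n,\alpha,\tilde X_0)$ for $t\ge t_0$ (the latter because the $e^{-2t}$-factors make the $\kappa$-dependent corrections exponentially small). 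Once this is granted, the argument is a routine Tso-type maximum principle and closes exactly as in Lemma \ref{unno K above}.
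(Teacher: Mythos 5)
Your proposal is correct in substance, but it takes a genuinely different route from the paper. The paper's proof of Theorem \ref{upper bound of norm K} is a one-line rescaling: since $\hat{X}=e^{t}\hat{\tilde{X}}$ gives $\hat{K}=e^{-nt}\hat{\tilde{K}}$, and Corollary \ref{upper bound and lower bound} shows $r_{\pm}(\hat{\tilde{\Omega}}_{\tau})$ is comparable to $e^{-t}$, the unnormalized Tso estimate of Lemma \ref{unno K above} yields $\hat{\tilde{K}}^{\alpha}\le C\,R_{+}/R_{-}^{1+n\alpha}\sim e^{n\alpha t}$, and the factor $e^{-n\alpha t}$ from rescaling exactly cancels this growth. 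You instead rerun the Tso maximum principle directly on the normalized equation, which is longer but also valid, and has the merit of avoiding the (slightly delicate) issue of applying Lemma \ref{unno K above} on a time interval over which $r_{\pm}$ shrinks by an unbounded factor; the price is that you need the uniform positive lower bound on $\hat{u}$ from Proposition \ref{upper bound and lower bound u}, which holds only for $t\ge T_{0}\ge t_{0}$ (the short range $[t_{0},T_{0}]$ must then be handled by the smoothness of the flow there), whereas the paper needs only the inradius/circumradius bounds. Two remarks on your anticipated obstacle: with your choice $Q=\psi\hat{K}^{\alpha}/(\hat{u}-\tfrac{1}{2\Lambda})$ the normalization factor $\mathcal{N}(t)=\dashint\psi\hat{K}^{\alpha-1}d\theta$ is never differentiated, so the feared $\partial_{t}\mathcal{N}$ bookkeeping does not arise; moreover, at an interior spacetime maximum every term carrying a power of $Q^{2}$ (the good $-c\alpha HQ^{2}$ term, the bad $(1+n\alpha)Q^{2}$ terms, and the $\psi_{t}$ contribution, which is $O(e^{-2t}Q^{2}/\mathcal{N})$) comes with the same factor $1/\mathcal{N}$, while the terms without $1/\mathcal{N}$, namely $-Q\hat{u}/(\hat{u}-\tfrac{1}{2\Lambda})$ and $-n\alpha Q$, are negative. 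Hence the maximum principle forces $H\le C(n,\alpha,\tilde{X}_{0})$ without requiring any a priori bound on $\mathcal{N}$ (which for $\alpha>1$ would otherwise be circular), and $\hat{K}\le(H/n)^{n}$ closes the argument.
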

\begin{proof}
	This is immediate by re-scaling the upper bound of $\hat{\tilde{K}}$ obtained in Lemma \ref{unno K above} and Corollary \ref{upper bound and lower bound}. 
\end{proof}
\begin{theorem}\label{lower bound of normal K}
Suppose $\alpha>\frac{1}{n+2}$ and $\hat{u}(x,t)>0$ is the solution of (\ref{norm u evo general}) with initial data $\hat{u}_0$, where $\hat{u}_0$ is the support function of the convex body $\hat{\Omega}_0$ with $|\hat{\Omega}_0|=|B(1)|$. Then there exists a constant $\underline{K}=\underline{K}(n,\alpha, \tilde{X}_0)>0$ such that   
	\begin{equation}
	\hat{K}(x,t)\geq \underline{K}	
	\end{equation}
	 for $t\ge t_1(n,\alpha,\tilde{X_0})\ge T_0$.
\end{theorem}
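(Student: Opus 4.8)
The plan is to derive a uniform positive lower bound for $\hat{K}$ by a maximum principle argument applied to the support function $\hat u$, exploiting the fact that by this point we already control $\hat u$ from above and below (Proposition \ref{upper bound and lower bound u}), the Gauss curvature from above (Theorem \ref{upper bound of norm K}), the principal curvatures from below after rescaling (Lemma \ref{unor ki lower boud}), and the inner/outer radii (Corollary \ref{upper bound and lower bound}). Equivalently, lower bounding $\hat K$ means upper bounding $\det(\nabla^2\hat u+\hat u I)=\sigma_n(W)=\hat K^{-1}$; since the eigenvalues $\lambda_i$ of $W=(\nabla^2\hat u+\hat u I)$ are the principal radii, it suffices to bound $\lambda_{\max}=\max_i\lambda_i$ from above uniformly in $t\ge t_1$. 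Note the rescaling: $\hat K=e^{-2nt}\hat{\tilde K}\circ(\text{rescale})$ ... actually more carefully, $\hat X=e^t\hat{\tilde X}$ gives $W_{ij}(\hat X)=e^t W_{ij}(\hat{\tilde X})$, so $\hat K=e^{-2nt}\hat{\tilde K}$, which tends to $0$ if $\hat{\tilde K}$ stays bounded — hence the lower bound on $\hat K$ is genuinely a feature of the \emph{normalized} flow and must come from the normalization term.

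The main step: consider $G(x,t)=\log\lambda_{\max}(x,t)$ (or, to make it smooth, $\log W_{11}$ at a point where $W$ is diagonalized with $W_{11}$ maximal, as in the proof of Lemma \ref{unor ki lower boud}), possibly with an auxiliary term $-L\hat u$ or $+\tfrac{L}{2}r^2$ to absorb bad gradient terms. Differentiate the normalized evolution (\ref{norm u evo general}) in the form $\hat u_t=\hat u-\Theta(x,t)\hat K^\alpha$ with $\Theta=\psi/\dashint_{\mathbb S^n}\psi\hat K^{\alpha-1}d\theta$; note that by Corollary \ref{upper bound and lower bound}, Proposition \ref{upper bound and lower bound u}, Theorem \ref{upper bound of norm K} and the $C^2$ bound (hence a lower bound on $\hat K$ is needed to close a two-sided $\Theta$ bound — see obstacle below), the quantity $\psi$ is two-sided bounded since $\hat r^2/e^{2t}$ and $\hat u^2/e^{2t}$ are bounded. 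Commuting covariant derivatives on $\mathbb S^n$ exactly as in (\ref{unor W11 evo}), at an interior spatial maximum $(x_0,t_0)$ of $G$ with $t_0>t_1$ one gets an inequality of the schematic form
\begin{equation*}
0\le \partial_t G-\mathcal L G\le \lambda_{\max}\big(c_0-c_1\,\alpha\,\Theta\,\hat K^\alpha\,\lambda_{\max}^{-1}\big)+(\text{lower order}),
\end{equation*}
where $\mathcal L=\alpha\Theta\hat K^\alpha W^{pq}\nabla^2_{pq}$ is the (concave) linearized operator, the term $+\lambda_{\max}$ coming from the \emph{linear} term $\hat u$ in the normalized flow after differentiation. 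Here the crucial sign: the zeroth-order contribution of the normalization gives a $+C\lambda_{\max}$ term, but the reaction term $-\alpha\Theta\hat K^\alpha W^{pp}(\lambda_{\max}-\lambda_p)\le 0$ and the term $-\alpha\Theta\hat K^\alpha W^{pp}$ provide, after using $\hat K^\alpha=\sigma_n^{-\alpha}$ and $W^{pp}\ge \lambda_{\max}^{-1}$, a term $\lesssim -\Theta\,\sigma_n^{-\alpha}\lambda_{\max}^{n-1}\cdot\lambda_{\max}^{-?}$; one arranges (possibly after also invoking the lower bound of $\hat u$ to bound $\sigma_n$ from below via $\hat u\le\lambda_i$, hence $\sigma_n\ge\prod\hat u=\hat u^n$, so $\hat K^\alpha\le\hat u^{-n\alpha}\le\Lambda^{n\alpha}$, and from above $\sigma_n\le\lambda_{\max}^{n-1}\lambda_{\min}$...) that at a large maximum the negative term dominates, forcing $\lambda_{\max}(x_0,t_0)\le C(n,\alpha,\tilde X_0)$. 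Comparing with the value at $t_1$ (where $C^2$ bounds from Theorem \ref{gene theorem}'s proof give an initial bound) yields $\lambda_{\max}\le C$ for all $t\ge t_1$, hence $\hat K\ge \underline K$.

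The hard part, and the reason for the restriction $t\ge t_1\ge T_0$, is controlling the \emph{nonlocal} normalizing factor $\Theta=\psi\big/\dashint_{\mathbb S^n}\psi\hat K^{\alpha-1}d\theta$: to get a two-sided bound on $\Theta$ one needs the average $\dashint_{\mathbb S^n}\psi\hat K^{\alpha-1}d\theta$ bounded away from $0$ and $\infty$, which requires \emph{both} an upper and a lower bound on $\hat K$ when $\alpha\ne 1$ — precisely what we are trying to prove. The way around this is a bootstrap: Theorem \ref{upper bound of norm K} already gives $\hat K\le\bar K$; combined with $|\hat\Omega_t|=|B(1)|$ fixed and the $C^0$, $C^1$ bounds on $\hat u$, the denominator $\dashint\psi\hat K^{\alpha-1}d\theta$ is automatically bounded \emph{above} when $\alpha\le 1$ and bounded \emph{below} when $\alpha\ge 1$; the complementary bound follows from the volume constraint $\dashint_{\mathbb S^n}\hat u\sigma_n\,d\theta=(n+1)|B(1)|$ together with Theorem \ref{upper bound of norm K} and Proposition \ref{upper bound and lower bound u} (which forces $\sigma_n=\hat K^{-1}$ to have a bounded average, hence $\hat K^{\alpha-1}$ cannot be too large/small on a set of full measure). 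Thus $\Theta$ is two-sided bounded on $[t_0,\infty)$ without circularity, and then the maximum principle argument above runs. The remaining technical care is the standard one for second-derivative estimates of concave fully nonlinear parabolic equations: handling the gradient terms $\psi_{u_i}$, $\Theta_{u_i}$ via the auxiliary $r^2$ or $\hat u$ term and Cauchy–Schwarz, exactly mirroring the computation already carried out in the proof of Lemma \ref{unor ki lower boud}.
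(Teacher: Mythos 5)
Your reduction of the lower bound on $\hat K$ to an upper bound on $\lambda_{\max}(\hat W)$ is logically valid but the maximum principle argument you sketch for it does not close, and the gap is exactly at the step you wave through with ``one arranges \dots that at a large maximum the negative term dominates.'' In the evolution of $\hat W_{11}$ under (\ref{norm u evo general}) the normalization contributes $+\hat W_{11}$, and the only term that can fight it is $-\alpha\Theta\hat K^{\alpha}\hat W_{11}\sum_i\hat W^{ii}=-\alpha\Theta\,\sigma_n^{-\alpha-1}\sigma_{n-1}\hat W_{11}$. Every way of bounding $\sigma_n^{-\alpha-1}\sigma_{n-1}$ from below (e.g.\ $\sigma_{n-1}\ge n\sigma_n^{1-1/n}$ gives $\ge n\hat K^{\alpha+1/n}$; Newton--Maclaurin gives $\ge c\,\hat W_{11}^{1/(n-1)}\hat K^{\alpha+1/(n-1)}$, and $\hat K^{-1}\le\hat W_{11}^{\,n}$ then yields only $\ge c\,\hat W_{11}^{-n\alpha-1}$) degenerates precisely when $\hat K\to 0$, which is the scenario you must exclude. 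So the good term dominates $+\hat W_{11}$ only if you already have $\hat K\ge\underline K$ --- this is why in the paper the upper bound on $\hat W_{11}$ is Theorem \ref{upper and lower bound of normal W thm}, proved \emph{after} and \emph{using} Theorem \ref{lower bound of normal K}, and not the other way around. Your attempted rescue via ``$\hat u\le\lambda_i$, hence $\sigma_n\ge\hat u^{\,n}$'' is also false (the support function does not bound the principal radii from below: a thin ellipse has $u=a$ at the tip of the major axis but radius of curvature $b^2/a$ there), and in any case it would only reproduce the upper bound on $\hat K$, not the lower one.

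The paper's actual mechanism is different and non-circular: it is the Tso-type argument of the ``alternate proof of Theorem 5.2'' in \cite{AGN}, applied to the speed. One sets $f=\psi\hat K^{\alpha}$ and studies $\log(f\hat u^{\,l})$ at its \emph{minimum} with $l=n\alpha+2$. The evolution of $f$ produces the nonnegative reaction term $\alpha f^2\sigma_n^{-1}\sigma_{n-1}/\eta$, which can simply be discarded when bounding from below, leaving $\mathcal L(\log(f\hat u^{\,l}))\ge (l-n\alpha)-l(n\alpha+2)f/(\hat u\eta)$ at the minimum point (after absorbing the $\psi_t$ terms, which decay like $e^{-t}$). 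If $f\hat u^{\,l}$ ever drops to a small threshold $\lambda$, then $f/(\hat u\eta)$ is small there --- this uses only the lower bound on $\hat u$ from Proposition \ref{upper bound and lower bound u} and a lower bound on $\eta=\dashint\psi\hat K^{\alpha-1}d\theta$ coming from the entropy bound (Lemma 5.4 of \cite{AGN}), neither of which presupposes $\hat K\ge\underline K$ --- so the right-hand side is $\ge 2-\varepsilon>0$, contradicting the minimum decreasing. This yields $f\ge c\,\hat u^{-l}$ directly, hence $\hat K\ge\underline K$. Your instinct that the nonlocal factor $\Theta$ is the obstacle is partly right but misdirected: the paper only ever needs $\eta$ bounded \emph{below} for this theorem (and your stated directions of the ``automatic'' bounds on $\eta$ for $\alpha\lessgtr 1$ are reversed: for $\alpha<1$, $\hat K\le\bar K$ gives a lower bound on $\hat K^{\alpha-1}$, while an upper bound on $\eta$ is what would need $\hat K\ge\underline K$). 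The decisive obstacle is the local reaction term in the $W_{11}$ equation, and switching from the second fundamental form to the speed is the missing idea.
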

\begin{proof}
It follows the same lines of the alternate proof of Theorem 5.2 in \cite{AGN}.
	Let 
	\begin{align*}
		f=\psi \hat{K}^\alpha=(\hat{u}-\hat{u}_t)\eta,
	\end{align*} 
	where $\eta=\dashint_{\mathbb{S}^n}\psi \hat{K}^{\alpha-1} d\theta$. Let $\hat{W}=(\hat{u}_{ij}+\hat{u}{\delta_{ij}})$, and $\mathcal{L}:=\partial_t-\frac{\alpha \psi \sigma_n^{-\alpha-1}}{\eta}\sigma_{n}^{ij}\nabla_i\nabla_j$. Then
	\begin{equation}
	\begin{aligned}
		f_t=&(\psi \hat{K}^\alpha)_t\\
		=&\psi_t\hat{K}^\alpha-\alpha \psi\sigma_n^{-\alpha-1}\sigma_{n}^{ij}(\hat{u}_{ij}-\frac{f_{ij}}{\eta}+\hat{u}\delta_{ij}-\frac{f}{\eta}\delta_{ij})\\
		=&\psi_t\hat{K}^\alpha-n\alpha f+\frac{\alpha \psi \sigma_n^{-\alpha-1}}{\eta}\sigma_{n}^{ij}f_{ij}+\frac{\alpha f\sigma_n^{-1}\sigma_{n-1}}{\eta}f.   	
	\end{aligned}
	\end{equation}
	Since 
	\begin{equation}
	\begin{aligned}
		\mathcal{L}\hat{u}=&\hat u-\frac{f}{\eta}-\frac{\alpha \psi\sigma_n^{-\alpha-1}}{\eta}\sigma_n^{ij}(\hat u_{ij}+\hat u\delta_{ij})+\hat u\frac{\alpha \psi\sigma_n^{-\alpha-1}}{\eta}\sigma_n^{ij}\delta_{ij}\\
		=&\hat u-\frac{(n\alpha+1)f}{\eta}+\frac{\alpha f\sigma_n^{-1}\sigma_{n-1}}{\eta}\hat u,
		\end{aligned}
	\end{equation}
	thus
	\begin{equation}
		\begin{aligned}
			\mathcal{L}(\log(f\hat{u}^l))\geq &\frac{\mathcal{L}f}{f}+l\frac{\mathcal{L}\hat u}{\hat u}	\\	
			= &(\frac{\psi_t}{\psi}-n\alpha+l)-\frac{l(n\alpha+1)f}{\hat u\eta}+(l+1)\frac{\alpha f\sigma_n^{-1}\sigma_{n-1}}{\eta}.
		\end{aligned}
	\end{equation}
	Since at the minimum point of $\log(fu^l)$, we have 
\begin{equation}
\frac{f_i}{f}+l\frac{\hat u_i}{\hat u}=0.
\end{equation}
Thus
\begin{equation}
\hat u_{ti}=(1+\frac{lf}{\hat u\eta})\hat u_i,	
\end{equation}	
and 
\begin{equation}
		\begin{aligned}
			\psi_t=&\frac{1}{e^{t}}[\psi_{\hat{\tilde{u}}}(\hat u_t-\hat u)+\psi_{\hat {\tilde{u}}_i}(\hat u_{it}-\hat u_i)]\\
			=&\frac{1}{e^{t}}[\psi_{\hat {\tilde{u}}}(\hat u-\frac{f}{\eta}-\hat u)+\psi_{\hat{\tilde{u}}_i}((1+\frac{lf}{\hat{u}\eta})\hat u_i-\hat u_i)]\\
			=&\frac{1}{e^t}[-\hat u\psi_{\hat{\tilde{u}}}+l\psi_{\hat{\tilde{u}}_i}\hat{u}_i]\frac{f}{\hat{u}\eta}.\\
		\end{aligned}
\end{equation}
Let $0<\varepsilon<1$ small, note that
\begin{equation}
	\begin{aligned}
		\psi_{\hat{\tilde{u}}}=&2\kappa\psi[(\frac{n+2}{2}\alpha+\frac{1}{2})(1+\kappa\frac{\hat{r}^2}{e^{2t}})^{-1}+(-\frac{n+2}{2}\alpha+\frac{1}{2})(1+\kappa\frac{\hat u^2}{e^{2t}})^{-1}]\frac{\hat u}{e^t}\leq\varepsilon \psi,\\
		|\psi_{\hat{\tilde{u}}_i}|=&|2\kappa\psi (\frac{n+2}{2}\alpha+\frac{1}{2})(1+\kappa\frac{\hat r^2}{e^{2t}})^{-1}\frac{\hat u_i}{e^t}|\leq\varepsilon\psi
	\end{aligned}
\end{equation}
for $t>T_1(n,\alpha,\tilde{X}_0)\geq T_0$ large enough since $\frac{\hat u}{e^t}=\hat{\tilde{u}},\frac{|\hat u_i|}{e^t}\leq \frac{\hat r}{e^t}=\hat{\tilde{r}}\to 0$ as $t\to\infty$. Thus
\begin{equation}
	\psi_t\geq-\frac{\psi f}{\hat u\eta} 
\end{equation}
for $t>T_2(n,\alpha,\tilde{X}_0)\geq T_1$ large enough, which implies that
\begin{equation}
	\begin{aligned}
		\mathcal{L}(\log(f\hat u^l))\geq&(l-n\alpha)-\frac{l(n\alpha+2)f}{\hat u\eta}.
	\end{aligned}
\end{equation}
Since $\eta=\dashint_{\mathbb{S}^n}\psi \hat{K}^{\alpha-1}d\theta\geq\frac{1}{2}\dashint_{\mathbb{S}^n}\hat{K}^{\alpha-1}d\theta$ for $t$ large and $(\dashint_{\mathbb{S}^n}\hat{K}^{\alpha-1})^{\frac{1}{\alpha-1}}\geq e^{\mathcal{E}_\alpha(\hat{\Omega}_t)}$ by Lemma 5.4 of \cite{AGN}. Thus $\eta\geq \frac{1}{2}e^{(\alpha-1)\mathcal{E}_\alpha(\hat{\Omega}_t)}\geq\frac{1}{2}$ for $\alpha\geq 1$ and $\eta\geq \frac{1}{2}\Lambda^{\alpha-1}$ for $\alpha<1$.	
	Let $\lambda=\frac{100}{(n\alpha+2)^2\Lambda^{(n-1)\alpha+4}}$, take $\Lambda$ large enough such that $\min_{t=0}(f\hat u^l)\geq 2\lambda$. Then we claim that $\min(f\hat{u}^l)\geq \lambda >0$ for all $t>0$. In fact, suppose $t'$ is the first time when $\min(f\hat u^l)=f\hat u^l(x',t')$ touch $\lambda$, take $l=n\alpha+2$, then at $(x',t')$,  we have
	\begin{equation}
		0\geq 2-(n\alpha+2)^2\frac{f}{\hat u\eta}=2-(n\alpha+2)^2\frac{f\hat u^l}{\eta \hat u^{l+1}}\geq 2-2(n\alpha+2)^2\frac{f\hat u^l}{\Lambda^{\alpha-l-2}},
	\end{equation}
where we used the fact that $\hat u\geq\frac{1}{\Lambda}$ in the last inequality. Thus $f\hat u^l(x',t')\leq \frac{\Lambda^{\alpha-2-l}}{(n\alpha+2)^2}=\frac{1}{(n\alpha+2)^2\Lambda^{(n-1)\alpha+4}}=\frac{1}{100}\lambda$ which is a contradiction since $f\hat u^l(x',t')=\lambda$. Thus the claim is true and $\hat{K}\geq(\frac{\lambda}{\hat u^l})^{\frac{1}{\alpha}}\geq \underline{K}>0$.
\end{proof} 

\begin{theorem}\label{upper and lower bound of normal W thm}
Suppose $\alpha>\frac{1}{n+2}$ and $\hat{u}(x,t)>0$ is the solution of (\ref{norm u evo general}) with initial data $\hat{u}_0$, where $\hat{u}_0$ is the support function of the convex body $\hat{\Omega}_0$ with $|\hat{\Omega}_0|=|B(1)|$. Then there exist constants $C_1,C_2$ depending only on $n,\alpha, \tilde{X}_0>0$ such that
\begin{equation}\label{upper and lower bound of normal W eq}
	C_1I\leq \hat{W}_{ij}=\hat{u}_{ij}+\hat{u}\delta_{ij}\leq C_2I.
\end{equation}
for $t\geq t_1>0$.  
\end{theorem}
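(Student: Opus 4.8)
The plan is to reduce the two-sided bound on $\hat W_{ij}=\hat u_{ij}+\hat u\delta_{ij}$ to an \emph{upper} bound on its largest eigenvalue, and then to obtain that upper bound by a maximum principle argument parallel to Lemma \ref{unor ki lower boud}. First observe that $\det\hat W_{ij}=\sigma_n(\hat W)=\hat K^{-1}$, so Theorems \ref{upper bound of norm K} and \ref{lower bound of normal K} give $\tfrac{1}{\bar K}\le\det\hat W_{ij}\le\tfrac{1}{\underline K}$ for $t\ge t_1$. Hence, once we produce a constant $C_2=C_2(n,\alpha,\tilde X_0)$ with $\lambda_{\max}(\hat W)\le C_2$ on $\{t\ge t_1\}$, the smallest eigenvalue automatically satisfies $\lambda_{\min}(\hat W)\ge\det\hat W/\lambda_{\max}^{n-1}\ge 1/(\bar K C_2^{\,n-1})=:C_1$, and (\ref{upper and lower bound of normal W eq}) follows with this $C_1,C_2$. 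So the whole task is to bound the largest principal radius of curvature, i.e.\ to rule out flat spots.

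For this I would run the maximum principle on $G(x,t)=\log\hat W_{11}+\tfrac L2\hat r^2$, where $\hat r^2=|\hat X|^2=\hat u^2+|\nabla\hat u|^2$ (bounded above and below by Corollary \ref{upper bound and lower bound} and Proposition \ref{upper bound and lower bound u}), $L$ is a large constant, and at the maximum of $\lambda_{\max}(\hat W)$ one chooses, exactly as in Lemma \ref{unor ki lower boud}, an orthonormal frame diagonalizing $\hat W$ with $\hat W_{11}=\lambda_{\max}$. Under (\ref{norm u evo general}) one has $\partial_t\hat W_{ij}=\hat W_{ij}-\tfrac1\eta\nabla_i\nabla_j f-\tfrac f\eta\delta_{ij}$ with $f=\psi\hat K^\alpha=\psi\sigma_n^{-\alpha}$ and $\eta=\dashint_{\mathbb S^n}\psi\hat K^{\alpha-1}d\theta$, and the linearized operator is $\mathcal{L}=\partial_t-\tfrac{\alpha f}{\eta}\hat W^{ij}\nabla_i\nabla_j$ as in the proof of Theorem \ref{lower bound of normal K}. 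Computing $\mathcal{L}G$ at an interior spatial maximum $(x_0,t_0)$ with $t_0>t_1$: the evolution of $\log\hat W_{11}$ produces, after commuting covariant derivatives on $\mathbb S^n$ as in Lemma \ref{unor ki lower boud}, the same third-order structure $\tfrac{\alpha f}{\eta}\hat W^{ii}\hat W^{jj}\hat W_{ij1}^2-\tfrac{\alpha f}{\eta}\hat W^{pp}\tfrac{\hat W_{11p}^2}{\hat W_{11}^2}$, which combines non-negatively at the critical point; meanwhile $\mathcal{L}(\tfrac L2\hat r^2)$ contributes the dominant term $-\tfrac{\alpha Lf}{\eta}\tr\hat W\le-\tfrac{\alpha Lf}{\eta}\hat W_{11}$. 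The three genuinely new features relative to Lemma \ref{unor ki lower boud} are harmless for $t$ large: the zeroth-order $+\hat W_{ij}$ only adds an additive constant after dividing by $\hat W_{11}$; the global factor $\eta$ is bounded above and below (by the estimates in the proof of Theorem \ref{lower bound of normal K} together with Theorems \ref{upper bound of norm K} and \ref{lower bound of normal K}); and the explicit $t$-dependence of $\psi$ enters only through $\psi\in[\tfrac1A,A]$ and through $|\psi_t|,\ \psi_{\hat{\tilde u}},\ |\psi_{\hat{\tilde u}_i}|=O(\varepsilon\psi)$ for $t$ large — precisely the decay established in the proof of Theorem \ref{lower bound of normal K} using $\hat u/e^t=\hat{\tilde u}\to0$ and $|\nabla\hat u|/e^t\le\hat{\tilde r}\to0$. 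Collecting terms one gets $0\le\mathcal{L}G(x_0,t_0)\le-\tfrac f\eta\big[\alpha L\psi\hat W_{11}-C(n,\alpha,\tilde X_0,L)-C(n,\alpha,\tilde X_0)\hat W_{11}\big]$; choosing $L$ large (using $\psi\ge\tfrac1A$) forces $\hat W_{11}(x_0,t_0)\le C_2(n,\alpha,\tilde X_0)$, and since $\tfrac L2\hat r^2$ is uniformly bounded this yields $\lambda_{\max}(\hat W)\le C_2$ for all $t\ge t_1$ (enlarging $t_1$ if necessary so that $G$ is controlled at the initial time). Combined with the first paragraph, this proves (\ref{upper and lower bound of normal W eq}).

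The main obstacle is the bookkeeping in this maximum principle computation: one must verify term by term that everything produced by the normalization — the differentiated factors of $\psi$ and of $\eta$, and the inhomogeneous term $+\hat W_{ij}$ — is genuinely lower order for $t$ large and can be absorbed, without ever changing the sign of the dominant good term $-\alpha Lf\psi\hat W_{11}/\eta$. This is lengthy but follows the template of Lemma \ref{unor ki lower boud} together with the $\psi$- and $\eta$-estimates already in the proof of Theorem \ref{lower bound of normal K}. Alternatively, given the two-sided bounds on $\hat u$ (Proposition \ref{upper bound and lower bound u}) and on $\hat K$ (Theorems \ref{upper bound of norm K} and \ref{lower bound of normal K}), one may instead invoke the $C^2$ estimate of \cite{AGN}, with exactly the modifications needed to handle the factor $\psi$ that were used for Theorem \ref{lower bound of normal K}.
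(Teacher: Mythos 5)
Your proposal is correct, and its overall strategy coincides with the paper's: reduce the two-sided bound to an upper bound on $\lambda_{\max}(\hat W)$ (the lower bound then following from the two-sided bounds on $\det\hat W=\hat K^{-1}$ in Theorems \ref{upper bound of norm K} and \ref{lower bound of normal K}), and obtain that upper bound by a maximum principle built on the computation of Lemma \ref{unor ki lower boud}. The difference is in where the dominant absorbing term comes from. You carry over the auxiliary function $\log\hat W_{11}+\tfrac{L}{2}\hat r^2$ and use the term $-\tfrac{\alpha Lf}{\eta}\operatorname{tr}\hat W\le-\tfrac{\alpha Lf}{\eta}\hat W_{11}$ produced by the $\hat r^2$ part, which beats the linear bad terms once $L$ is large and $f/\eta$ is bounded below (so Theorem \ref{lower bound of normal K} is genuinely needed here, as you use). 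The paper instead applies the maximum principle to $\hat W_{11}$ alone (so $\hat{\tilde W}_{11i}=0$ at the maximum, with no auxiliary function), and extracts a \emph{superlinear} good term: from $\alpha\psi\hat W_{11}\sum_i\hat W^{ii}=\alpha\psi\hat W_{11}\sigma_{n-1}/\sigma_n$, Newton's inequality together with the lower bound on $\hat K$ gives $-c\,\hat W_{11}^{n/(n-1)}$, which dominates the inhomogeneous $+\hat W_{11}$ coming from the normalization. A second, smaller difference: the paper disposes of all the lower-order terms involving derivatives of $\psi$ by observing they are expressed in the scaling-invariant quantities $\hat{\tilde W}$, which are already bounded by Lemma \ref{unor ki lower boud}; you instead invoke the large-$t$ decay $\psi_{\hat{\tilde u}},|\psi_{\hat{\tilde u}_i}|=O(\varepsilon\psi)$ as in the proof of Theorem \ref{lower bound of normal K}. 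Both mechanisms are sound; the paper's version is slightly cleaner in that it avoids re-deriving the critical-point identity for the auxiliary function in the normalized setting, while yours stays closer to the template of Lemma \ref{unor ki lower boud} verbatim.
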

\begin{proof}
Since we already have the upper and lower bound of $\hat{K}$, it suffices to prove an upper bound of the eigenvalues of $(\hat{W}_{ij})$. Similar to the proof of Lemma \ref{unor ki lower boud}, let $(\lambda_1,\dots,\lambda_n)$ be the eigenvalues of $(\hat{W}_{ij})$, $\lambda(x,t):=\max_{i=1,\dots,n}\lambda_i(x,t)$.  For any $T>0$, suppose $\lambda$ attains its maximum on $\mathbb{S}^n\times [0,T]$ at $(x',t')$. We take a local orthonormal frame $\{e_1,\dots,e_n\}$ on $\mathbb{S}^n$ around $x'$, such that $(\hat{W}_{ij}(x',t'))$ is diagonal and $\lambda(x',t')=\lambda_1(x',t')=\hat{W}_{11}(x',t')$. Then $\hat{W}_{11}(x,t)$ also attains its maximum at $(x',t')$. If $t'=0$, $\lambda(x,t)\leq\lambda (x',0)$, we are done. Assume that $t'>0$,  by (\ref{unor W11 evo}), we have  
	\begin{align*}
		\hat{W}_{11,t}(x,t)=&(e^t\hat{\tilde{W}}_{11}(x,\tau))_t=\hat{W}_{11}(x,t)+e^t\hat{\tilde{W}}_{11,\tau}(x,\tau)\tau'(t)\\
		=&\hat{W}_{11}(x,t)+\frac{e^{-n\alpha t}}{\eta}\hat{\tilde{W}}_{11,\tau}(x,\tau)\\
		=&\hat{W}_{11}-\frac{1}{\eta}\hat{K}^\alpha[\psi+\psi_{\hat{\tilde{u}}}\hat{\tilde{W}}_{11}-\psi_{\hat{\tilde{u}}}\hat{\tilde{u}}+\psi_{\hat{\tilde{u}}_i}\hat{\tilde{W}}_{11i}-\psi_{\hat{\tilde{u}}_1}\hat{\tilde{u}}_1\\
		&+\psi_{\hat{\tilde{u}}\hat{\tilde{u}}}\hat{\tilde{u}}_1^2+\psi_{\hat{\tilde{u}}_1\hat{\tilde{u}}_1}\hat{\tilde{W}}_{11}^2-2\psi_{\hat{\tilde{u}}_1\hat{\tilde{u}}_1}\hat{\tilde{W}}_{11}\hat{\tilde{u}}+\psi_{\hat{\tilde{u}}_1\hat{\tilde{u}}_1}\hat{\tilde{u}}^2+2\psi_{\hat{\tilde{u}}\hat{\tilde{u}}_1}\hat{\tilde{W}}_{11}\hat{\tilde{u}}_1
			\\
			&-2\psi_{\hat{\tilde{u}}\hat{\tilde{u}}_1}\hat{\tilde{u}}\hat{\tilde{u}}_1-2\alpha \hat{\tilde{W}}^{ii}\hat{\tilde{W}}_{ii1}(\psi_{\hat{\tilde{u}}}\hat{\tilde{u}}_1+\psi_{\hat{\tilde{u}}_1}\hat{\tilde{W}}_{11}-\psi_{\hat{\tilde{u}}_1}\hat{\tilde{u}})+\alpha^2\psi(\hat{\tilde{W}}^{ii}\hat{\tilde{W}}_{ii1})^2\\
			&+\alpha \psi \hat{\tilde{W}}^{ii}\hat{\tilde{W}}^{jj}\hat{\tilde{W}}_{ij1}^2+\alpha \psi \hat{W}^{ii}(\hat{W}_{11}-\hat{W}_{ii})-\alpha \psi \hat{W}^{ii}\hat{W}_{11,ii}],
\end{align*}
since $\hat{\tilde{W}}^{ii}(\hat{\tilde{W}}_{11}-\hat{\tilde{W}}_{ii})$ and $\hat{\tilde{W}}^{ii}\hat{\tilde{W}}_{11,ii}$ are scaling invariant.
Since we already proved that $\hat{\tilde{W}}_{11}\leq C(n,\alpha,\tilde{X}_0)$ in Lemma \ref{unor ki lower boud}, and by maximum principle at $(x',t')$
\begin{equation}
	\hat{\tilde{W}}_{11i}=0,
\end{equation}
thus,
 \begin{align*}
		\mathcal{L}\hat{W}_{11}
		&\leq \hat{W}_{11}-\frac{1}{\eta}\hat{K}^\alpha[-2\alpha \hat{\tilde{W}}^{ii}\hat{\tilde{W}}_{ii1}(\psi_{\hat{\tilde{u}}}\hat{\tilde{u}}_1+\psi_{\hat{\tilde{u}}_1}\hat{\tilde{W}}_{11}-\psi_{\hat{\tilde{u}}_1}\hat{\tilde{u}})+\alpha^2\psi(\hat{\tilde{W}}^{ii}\hat{\tilde{W}}_{ii1})^2\\
			&+\alpha \psi \hat{\tilde{W}}^{ii}\hat{\tilde{W}}^{jj}\hat{\tilde{W}}_{ij1}^2+\alpha \psi \hat{W}^{ii}(\hat{W}_{11}-\hat{W}_{ii})-C(n,\alpha,\tilde{X}_0)]\\
		&\leq \hat W_{11}-\frac{1}{\eta}\hat{K}^\alpha[-C(n,\alpha,\tilde{X}_0)|\hat{\tilde{W}}^{ii}\hat{\tilde{W}}_{ii1}|+\alpha^2\psi(\hat{\tilde{W}}^{ii}\hat{\tilde{W}}_{ii1})^2+\alpha \psi \hat W_{11}\hat W^{ii}\\
		&-C(n,\alpha,\tilde{X}_0)]\\
		&\leq \hat W_{11}-\frac{1}{\eta}\hat{K}^\alpha[\alpha \psi \hat W_{11}\sum_{i}\hat W^{ii}-C(n,\alpha,\tilde{X}_0)]\\
		&\leq \hat W_{11}-C(n,\alpha,\tilde{X}_0,\bar{K},\underline{K})\hat W_{11}\sum_{i}\hat W^{ii}+C(n,\alpha,\tilde{X}_0,\bar{K},\underline{K})\\
		&= \hat W_{11}-C(n,\alpha,\tilde{X}_0,\bar{K},\underline{K})\hat W_{11}\frac{\sigma_{n-1}}{\sigma_{n}}+C(n,\alpha,\tilde{X}_0,\bar{K},\underline{K}),	
		\end{align*}
where we used the Cauchy-Schwartz inequality in the third step, and $ \eta=\dashint_{\mathbb{S}^n}\psi\hat{K}^{\alpha-1}d\theta\leq C(n,\alpha,\tilde{X}_0,\bar{K},\underline{K})$. By Newton 's inequality 
\begin{equation}
	\frac{\sigma_{n-1}}{n}\geq (\frac{\sigma_1}{n})^{\frac{1}{n-1}}\sigma_n^{\frac{n-2}{n-1}},
\end{equation}
we get
\begin{equation}
\begin{aligned}
	 \mathcal{L}\hat{W}_{11}\leq \hat{W}_{11}-C(n,\alpha,\tilde{X}_0,\bar{K},\underline{K})\hat{W}_{11}^{\frac{n}{n-1}}+C(n,\alpha,\tilde{X}_0,\bar{K},\underline{K}).
	\end{aligned}
\end{equation}
This implies that 
\begin{equation}
	\hat W_{11}(x_0,t_0)\leq C(n,\alpha,\tilde{X}_0),
\end{equation}
since $\bar{K}$ and $\underline{K}$ only depend on $n,\alpha,\tilde{X}_0$.
\end{proof} 
Combining Proposition \ref{upper bound and lower bound u}, Theorem \ref{upper and lower bound of normal W thm},  we conclude that there exists a positive constant $C$ depending only on $n, \alpha,\tilde{X}_0$ such that for the unique solution to (\ref{norm u evo general})
\begin{equation}\label{C2 bound}
\|\hat{u}(\cdot,t)\|_{C^2}\leq C.	
\end{equation}

\section{Convergence to a sphere}
Since (\ref{norm u evo general}) is a concave parabolic equation, by Krylov's theorem and the standard theory of parabolic equations, the estimates (\ref{C2 bound}) and (\ref{upper and lower bound of normal W eq}) imply bounds on all derivatives of $\hat{u}(x,t)$. More precisely, for any $k\geq 3$, there exists $C_k\geq 0$, depending only on $n,\alpha,\tilde{X}_0$ such that for $t\geq t_1(n,\alpha,\tilde{X}_0)$, 
\begin{equation}\label{uni ck est}
	\|\hat u(\cdot,t)\|_{C^k(\mathbb{S}^n)}\leq C_k.
\end{equation}
\begin{proposition}
	Let $\hat{X}(t)$ be the solution of (\ref{nor X equa eucl}) with $\alpha>\frac{1}{n+2}$, then $\hat X(x,t)$ converges in $C^\infty$-topology to a round sphere as $t\to\infty$.
\end{proposition}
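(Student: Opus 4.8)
The plan is to combine the uniform a priori estimates already in hand with the almost-monotonicity of the entropy: first extract a subsequential limit of the rescaled flow, then show this limit is a self-similar (soliton) solution of the \emph{Euclidean} $\alpha$-Gauss curvature flow, identify it with a round sphere via the classification in \cite{BCD}, and finally upgrade subsequential convergence to full $C^\infty$ convergence by a compactness argument.

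\textbf{Step 1 (compactness).} By the uniform bounds (\ref{uni ck est}) together with (\ref{upper and lower bound of normal W eq}), the family $\{\hat u(\cdot,t)\}_{t\ge t_1}$ is bounded in $C^k(\mathbb{S}^n)$ for every $k$ with $(\hat W_{ij})$ uniformly positive definite; hence for any sequence $t_j\to\infty$, Arzel\`a--Ascoli yields a subsequence (not relabeled) with $\hat u(\cdot,t_j)\to\hat u_\infty$ in $C^\infty(\mathbb{S}^n)$, where $\hat u_\infty$ is the support function of a smooth, uniformly convex body $\hat\Omega_\infty$ with $|\hat\Omega_\infty|=|B(1)|$.

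\textbf{Step 2 (the limit is a soliton).} By Theorem \ref{monotone quantity} the modified entropy $\mathcal{E}_\alpha(\hat\Omega_t)+C(n,\alpha,\tilde X_0)e^{-\frac{2(n+1)}{2n+1}t}$ is non-increasing and bounded below, hence converges; integrating the differential inequality in the proof of Theorem \ref{monotone quantity} (equivalently, using Lemma \ref{entropy in zero point}) shows that the nonnegative defect integrand is integrable on $[t_0,\infty)$, so it tends to zero along a subsequence of times, which we may further refine so that $\hat u(\cdot,t_j)\to\hat u_\infty$ in $C^\infty$ as in Step 1. Since $\hat u$ and $\hat r$ are uniformly bounded by Corollary \ref{upper bound and lower bound} and Proposition \ref{upper bound and lower bound u}, the weight $\psi=(1+\kappa\hat r^2 e^{-2t})^{\frac{n+2}{2}\alpha+\frac12}(1+\kappa\hat u^2 e^{-2t})^{-\frac{n+2}{2}\alpha+\frac12}\to1$ in $C^\infty$ as $t\to\infty$. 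Passing to the limit along $t_j$, the Gauss curvature $\hat K(\cdot,t_j)\to\hat K_\infty$, the defect of $\hat\Omega_t$ converges to the defect of $\hat\Omega_\infty$, which therefore vanishes; by the equality case of the integral inequalities underlying the lower bound of the entropy in \cite{Guan-Ni,AGN} this forces $\hat K_\infty^\alpha=c\,\hat u_\infty$ with $c=\dashint_{\mathbb{S}^n}\hat K_\infty^{\alpha-1}d\theta>0$ (and $\hat K_\infty=\hat u_\infty$ when $\alpha=1$), i.e. $\hat\Omega_\infty$ is a self-similar solution of the Euclidean $\alpha$-Gauss curvature flow. Moreover, the proof of Theorem \ref{lower bound of u} shows the entropy point $z_e(\hat\Omega_t)\to0$, so $\hat u_\infty$ is the support function of $\hat\Omega_\infty$ relative to the origin.

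\textbf{Step 3 (identification and upgrade to full convergence).} Since $\alpha>\frac{1}{n+2}$, the classification of closed, uniformly convex self-similar solutions in \cite{BCD} applies and shows $\hat\Omega_\infty$ is a round ball centered at the origin; the volume constraint $|\hat\Omega_\infty|=|B(1)|$ then forces $\hat\Omega_\infty=B(1)$, i.e. $\hat u_\infty\equiv1$. As every subsequential limit equals the unit ball, full convergence follows in the standard way: if $\hat u(\cdot,t)\not\to1$ in $C^k(\mathbb{S}^n)$ for some $k$, there would be $t_j\to\infty$ with $\|\hat u(\cdot,t_j)-1\|_{C^k}\ge\varepsilon$, but by (\ref{uni ck est}) we could pass to a subsequence converging in $C^k$ to a limit which by Steps 1--2 is the unit ball, a contradiction. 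Hence $\hat u(\cdot,t)\to1$ in $C^\infty(\mathbb{S}^n)$, equivalently $\hat X(\cdot,t)$ converges in $C^\infty$ to the round unit sphere.

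I expect the main obstacle to be Step 2: rigorously extracting the soliton equation $\hat K_\infty^\alpha=c\,\hat u_\infty$ in the limit while carrying along the time-dependent weight $\psi$ and the nonlocal denominator, and making sure the vanishing of the integrated defect survives passage to the $C^\infty$ limit (rather than only yielding a weak/averaged statement). Once the soliton equation is secured, invoking \cite{BCD} and the compactness upgrade in Step 3 is routine.
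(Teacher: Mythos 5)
Your overall strategy is the same as the paper's: extract subsequential limits via the uniform estimates, use the integrated entropy defect from Lemma \ref{entropy in zero point} to force the soliton equation, invoke \cite{BCD}, and upgrade to full convergence by uniqueness of subsequential limits. But there is a genuine gap exactly at the point you flag at the end, and it sits in the interface between Steps 2 and 3. In Step 2 you establish the soliton equation only for the limit along a \emph{specially chosen} sequence of times --- one along which the defect $D(t)$ tends to zero, which exists because $\int^\infty D(t)\,dt<\infty$. In Step 3, however, you need \emph{every} subsequential limit to be the unit ball: given the ``bad'' sequence $t_j$ with $\|\hat u(\cdot,t_j)-1\|_{C^k}\ge\varepsilon$, integrability of $D$ does not allow you to refine $t_j$ to a subsequence along which $D\to0$ (an integrable nonnegative function can spike precisely at the prescribed times $t_j$). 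So, as written, Step 3 does not follow from Steps 1--2, and the compactness upgrade is circular.

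The fix is available from estimates you already have, and is essentially what the paper does. Because of the uniform bounds (\ref{uni ck est}) and the evolution equation, all quantities entering the defect have bounded time derivatives, so $t\mapsto D(t)$ is Lipschitz; combined with $\int^\infty D(t)\,dt<\infty$ this forces $D(t)\to0$ as $t\to\infty$ (not merely along some sequence), hence every subsequential limit has vanishing defect and the equality case applies to all of them. The paper phrases this slightly differently: it takes space-time limits $\hat u(\cdot,\cdot+t_j)\to\hat u_\infty$ on $\mathbb{S}^n\times[0,T]$ and argues that if the limit violated the soliton equation at some $(x',t')$, then by (\ref{uni ck est}) the pointwise defect would exceed $\delta>0$ on whole space-time neighborhoods $B(x',\varepsilon)\times[t'+t_{j_k}-\varepsilon,t'+t_{j_k}+\varepsilon]$ of fixed measure, contradicting integrability. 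Either route closes the gap. The remaining ingredients of your argument --- convergence of the weight $\psi\to1$ and of the nonlocal denominator, the equality case of the Andrews--Guan--Ni inequality, and the use of \cite{BCD} together with the volume normalization to identify the unit sphere --- are sound and match the paper.
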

\begin{proof}
	First, given a sequence $t_j\to\infty$  and $T>0$, define $\hat{u}_j(x,t)=\hat{u}(x,t+t_j)$. Since by (\ref{uni ck est}), $\hat u_j$ are uniformly bounded in $C^k(\mathbb{S}^n\times[0,T])$, for every $k\in\mathbb{N}$. By Arzel\`a-Ascoli theorem, $\hat u_j$ has a subsequence converging in $C^\infty$-topology to a limit $\hat u_\infty$ on $\mathbb{S}^n\times[0,T]$ 
	and $ \hat u_\infty$ is a solution of
	\begin{equation}\label{soli evo}
		\hat u(x,t)_{\infty,t}=\hat u_\infty(x,t)-\frac{\hat{K}^\alpha_{\infty}(x,t)}{\dashint_{\mathbb{S}^n}\hat u_\infty\hat{K}^{\alpha-1}_{\infty}d\theta}
	\end{equation}
on $\mathbb{S}^n\times[0,T]$.

We claim that $\hat u_\infty(x,t)$ is a soliton, i.e.	
\begin{equation}\label{soli eq}
		\lambda(t) \hat u_{\infty}(x,t)=\hat{K}_{\infty}^\alpha(x,t),
	\end{equation}
	 for some $\lambda(t)>0$. In fact, otherwise, there is $(x',t')\in\mathbb{S}^n\times[0,T]$, a sequence $j_k\to\infty$ and positive constants 
	 $\varepsilon, \delta>0$ independent of $j_k$ such that
	 \begin{equation}\label{differ}
	 \begin{aligned}
	 	&\Big(\sqrt{\frac{\hat u(x,t)}{\hat{K}(x,t)}}-\sqrt{\frac{\hat{K}(x,t)}{\hat u(x,t)}}\Big)^2>\delta,\quad(\alpha=1);\\
	 	&\text{or}\\
	 	&1-\frac{\dashint_{\mathbb{S}^n}(\hat u)^{-\frac{1}{\alpha}}\hat{K}^{\alpha}d\theta}{\dashint_{\mathbb{S}^n}(\hat u)^{1-\frac{1}{\alpha}}\hat{K}^{\alpha-1}d\theta\cdot \dashint_{\mathbb{S}^n}(\hat u)^{1-\frac{1}{\alpha}}d\theta}>\delta,\quad (\alpha\neq 1)	
	\end{aligned}
	\end{equation}
	 	 on $B(x',\varepsilon)\times [t'+t_{j_k}-\varepsilon,t'+t_{j_k}+\varepsilon]$ by (\ref{uni ck est}). This implies that 
	 	 	\begin{align*}
	 	 		&\int^{\infty}_{t_0}\dashint_{\mathbb{S}^n}\Big(\sqrt{\frac{\hat u(x,t)}{\hat{K}(x,t)}}-\sqrt{\frac{\hat{K}(x,t)}{\hat u(x,t)}}\Big)^2\Big(1-C(n,\tilde{X}_0)e^{-\frac{2(n+1)}{2n+1}t}\Big)d\theta dt=\infty,\quad (\alpha=1);\\
	 	 		&\text{or}\\
	 	 		&\int^{\infty}_{t_0} \big(1-\frac{\dashint_{\mathbb{S}^n}(\hat u)^{-\frac{1}{\alpha}}\hat{K}^{\alpha}d\theta}{\dashint_{\mathbb{S}^n}(\hat u)^{1-\frac{1}{\alpha}}\hat{K}^{\alpha-1}d\theta\cdot \dashint_{\mathbb{S}^n}(\hat u)^{1-\frac{1}{\alpha}}d\theta}\big)(1-C(n,\alpha,\tilde{X}_0)e^{-\frac{2(n+1)}{2n+1}t})dt=\infty,\quad(\alpha\neq 1),
	 	 	\end{align*}
	  which is a contradiction to (\ref{entropy value in zero point a}) and (\ref{entropy value in zero point b}). Thus (\ref{soli eq}) is true. Multiplying $\hat{K}^{-1}(x,t)$ on both side of (\ref{soli eq}) and integrating on $\mathbb{S}^n$, we get $\lambda(t)=\dashint_{\mathbb{S}^n}\hat{K}^{\alpha-1}_{\infty}d\theta$. Plugging this into (\ref{soli evo}), we get $\hat u_{\infty,t}=0$, $\lambda(t)\equiv\dashint_{\mathbb{S}^n}\hat{K}^{\alpha-1}_{\infty}d\theta$ is a constant. By Theorem 1 of \cite{BCD}, the solution of (\ref{soli eq}) is a round sphere.
	  
Next, we claim that $\{\hat u(x,t)\}$ converges in $C^\infty$-topology to $\hat u_{\infty}$ itself as $t\to\infty$. In fact, otherwise, there is $k\in\mathbb{N}$, a positive constant $\gamma>0$ and a sequence $\{t_l\}\to\infty$ such that 
\begin{equation}\label{sub not con}
\sup_{x\in\mathbb{S}^n}|\hat u^{(k)}(x,t_l)-\hat u_\infty^{(k)}(x,t_l)|\geq \gamma,	\quad \forall\, l\geq 1.
\end{equation}	   
On the other hand, applying the above argument to $\{\hat u_l(x,t):=\hat u(x,t_l+t)\}$, we can find a subsequence $\{\hat u(x,t_{l_j}+t)\}$ of $\{\hat u(x,t_l+t)\}$ converging to $\hat u_\infty(x,t)$ in $C^\infty$-topology on $\mathbb{S}^n\times\{0\}$ as $j\to\infty$, i.e. $\hat u_{t_j}(x,0)$ converges in $C^\infty$-topology to $\hat u_\infty(x)$ on $\mathbb{S}^n$, which is a contradiction to (\ref{sub not con}).  
\end{proof}
Recall that $q_0$ is the extinct point of $\tilde{M}_{\tau}$ as $\tau\to T^*$.
\begin{theorem}\label{norm cover space}
	$\tilde{M}_{\tau}$ converges to a geodesic sphere centered at $q_0$ in $\mathbb{N}^{n+1}(\kappa)$ as $\tau\to T^*$ after the rescaling.  
\end{theorem}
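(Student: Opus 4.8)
The plan is to transport the already-established Euclidean statement back to the space form. By the preceding Proposition, the normalized Euclidean flow $\hat X(t)=e^t\pi_{q_0}(\tilde X(\tau))$ converges in $C^\infty$ to a round sphere, and I would first identify this sphere: the volume constraint $|\hat\Omega_t|=|B(1)|$ fixes its radius to be $1$, and its centre must be the origin $0=\pi_{q_0}(q_0)$. For the latter I would invoke the computation already made in the proof of Theorem~\ref{lower bound of u}, namely $\lim_{t\to\infty}\big(\mathcal E_\alpha(\hat\Omega_t,0)-\mathcal E_\alpha(\hat\Omega_t)\big)=0$ together with Lemma~4.2 of \cite{AGN}, which forces the entropy point $z_e(\hat\Omega_t)\to 0$; since the entropy point of the limiting sphere is its centre, the limit is the unit sphere $\mathbb S^n\subset L_{q_0}$ centred at the origin. (Alternatively, the soliton equation $\lambda\hat u_\infty=\hat K_\infty^\alpha$, with support function and Gauss curvature taken with respect to the origin, already forces the soliton sphere to be centred there.)

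Next I would set up the rescaling intrinsically in $\mathbb N^{n+1}(\kappa)$, using the exponential map at $q_0$ rather than $\pi_{q_0}$ (this matters for $\kappa=-1$, where $\pi_{q_0}$ only sees the unit ball of $L_{q_0}$). Since $\tilde M_\tau$ shrinks into a small geodesic ball about $q_0$ and, by the first paragraph, $q_0$ is an interior point of $\tilde\Omega_\tau$ for $\tau$ near $T^*$, we may write $\tilde M_\tau$ as a radial graph $\rho=\rho_\tau(\theta)$, $\theta\in\mathbb S^n$, in geodesic polar coordinates at $q_0$; define the rescaled hypersurface by $\tilde M_\tau^{\mathrm{resc}}:=\exp_{q_0}\big(c(\tau)\exp_{q_0}^{-1}(\tilde M_\tau)\big)$, where $c(\tau)>0$ makes the Euclidean volume of $c(\tau)\exp_{q_0}^{-1}(\tilde\Omega_\tau)$ in $T_{q_0}\mathbb N^{n+1}(\kappa)\cong\mathbb R^{n+1}$ equal to $|B(1)|$. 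The key comparison is that $\pi_{q_0}$ represents the same surface as the radial graph $r=r_\tau(\theta)=\phi(\rho_\tau(\theta))/\phi'(\rho_\tau(\theta))$ over the \emph{same} $\theta$; by (\ref{r phi}) this is $r=\tan\rho$ (resp.\ $\tanh\rho$), so $\rho_\tau=G(r_\tau)$ with $G$ smooth near $0$ and $G(s)=s+O(s^3)$, and the two normalizing factors satisfy $c(\tau)/e^t=1+O(\max_\theta\rho_\tau^2)$ because the Jacobian of $\pi_{q_0}$ is $1+O(\rho^2)$.

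Then I would assemble the estimate. On $\tilde M_\tau$ one has $\rho_\tau=O(e^{-\frac{n+1}{2n+1}t})$ by (\ref{decayest}), while $R_\tau(\theta):=e^t r_\tau(\theta)$ is exactly the radial function of $\hat X(t)$, hence bounded in every $C^k(\mathbb S^n)$ with $R_\tau\to 1$ in $C^\infty$ by the preceding Proposition and (\ref{uni ck est}). Therefore $c(\tau)\rho_\tau(\theta)=\big(1+O(e^{-\frac{2(n+1)}{2n+1}t})\big)\,e^t G(e^{-t}R_\tau(\theta))$, and since $G(s)=s+O(s^3)$ with all derivatives of the remainder controlled, $e^t G(e^{-t}R_\tau)=R_\tau+O(e^{-2t}R_\tau^3)$ with the error and all its $\theta$-derivatives tending to zero uniformly (here one uses $G'(0)=1$, so $G'(e^{-t}R_\tau)\to 1$ in every $C^k$). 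Hence $c(\tau)\rho_\tau\to 1$ in $C^\infty(\mathbb S^n)$, i.e.\ $c(\tau)\exp_{q_0}^{-1}(\tilde M_\tau)\to\mathbb S^n$ in $C^\infty$; applying $\exp_{q_0}$, a fixed diffeomorphism on the unit ball of $T_{q_0}\mathbb N^{n+1}(\kappa)$, gives $\tilde M_\tau^{\mathrm{resc}}\to$ the geodesic sphere of radius $1$ centred at $q_0$ in the $C^\infty$-topology, which is the assertion.

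I expect the serious content of the theorem to lie entirely in the preceding Proposition; what remains is a change-of-variables argument, and the only two points needing care are (i) that $C^\infty$ (not merely $C^0$) convergence survives the nonlinear substitution $s\mapsto\arctan s$ (resp.\ $\mathrm{arctanh}\,s$) — which is clean because the substitution is smooth near $0$ with unit derivative there, so after rescaling all of its nonlinear corrections and their derivatives are $O(e^{-2t})$ — and (ii) pinning the centre of the limit sphere to $q_0$, which is exactly the entropy-point estimate recalled in the first paragraph. No further obstacle is anticipated.
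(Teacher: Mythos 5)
Your proposal is correct, and in fact the paper offers \emph{no} written proof of this theorem at all: it is stated immediately after the Proposition on $C^\infty$-convergence of the normalized Euclidean flow and then used to close the proof of Theorem \ref{main theo}, the authors evidently regarding the passage back to $\mathbb{N}^{n+1}(\kappa)$ as routine. What you have written is exactly the transport argument that is being suppressed, and the two points you isolate are the right ones: the centre of the limit sphere is pinned at the origin (either by the entropy-point estimate from Theorem \ref{lower bound of u}, or more directly because the soliton identity $\lambda\hat u_\infty=\hat K_\infty^\alpha$ with $\hat u_\infty\ge 1/\Lambda$ forces $\hat u_\infty$ constant), and the substitution $\rho=G(r)$ with $G(s)=s+O(s^3)$, combined with the decay (\ref{decayest}) and the uniform bounds (\ref{uni ck est}), shows that all nonlinear corrections and their $\theta$-derivatives are $O(e^{-2t})$, so $C^\infty$-convergence survives the change of variables and the application of the fixed diffeomorphism $\exp_{q_0}$. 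No gap.
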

By Corollary \ref{coro to point} and Theorem \ref{norm cover space}, we finish the proof of Theorem \ref{main theo}.

\renewcommand{\abstractname}{Acknowledgements}
\begin{abstract}
The authors would like to thank Professor Pengfei Guan for his supervision and all his useful suggestions.
\end{abstract}

\end{document}